\numberwithin{equation}{section}
\theoremstyle{plain}
\newtheorem{theorem}{Theorem}[section]
\newtheorem{proposition}[theorem]{Proposition}
\theoremstyle{definition}
\newenvironment{defi}
{%
	\pushQED{\qed}\begin{defi/}}
	{\popQED\end{defi/}}
\newenvironment{exa}
{%
	\pushQED{\qed}\begin{exa/}}
	{\popQED\end{exa/}}
\newenvironment{rema}
{%
	\pushQED{\qed}\begin{rema/}}
	{\popQED\end{rema/}}
\newtheorem{defi/}[theorem]{Definition}
\newtheorem{rema/}[theorem]{Remark}
\newtheorem{exa/}[theorem]{Example}
\newcommand{\bfxi}{\tmmathbf{\xi}}
\newcommand{\bfeta}{\tmmathbf{\eta}}
\newcommand{\eps}{\varepsilon}
\newcommand{\ptl}{\partial}
\newcommand{\bde}{\boldsymbol{e}}
\newcommand{\bdeta}{\boldsymbol{\eta}}
\newcommand{\bdzeta}{\boldsymbol{\zeta}}
\newcommand{\bdG}{\boldsymbol{\Gamma}}
\newcommand{\bdx}{\boldsymbol{x}}
\newcommand{\bdxi}{\boldsymbol{\xi}}
\newcommand{\bdn}{\boldsymbol{n}}
\newcommand{\tva}{\text{\tmverbatim{a}}}
\newcommand{\tvb}{\text{\tmverbatim{b}}}
\newcommand*\mycircle[1]{%
	\begin{tikzpicture}[baseline=(C.base)]
		\node[draw,circle,inner sep=0.2pt](C) {#1};
\end{tikzpicture}}
\newcommand*{\transpose}{%
	{\mathpalette\@transpose{}}%
}
\newcommand*{\@transpose}[2]{%
	% #1: math style
	% #2: unused
	\raisebox{\depth}{$\m@th#1\intercal$}%
}
\newcommand{\mathd}{\mathrm{d}}
\newcommand{\tmem}[1]{{\em #1\/}}
\newcommand{\tmmathbf}[1]{\ensuremath{\boldsymbol{#1}}}
\newcommand{\tmop}[1]{\ensuremath{\operatorname{#1}}}
\newcommand{\tmverbatim}[1]{{\ttfamily{#1}}}
\definecolor{myred}{RGB}{160,0,0}
\definecolor{mygreen}{RGB}{0,160,0}
\definecolor{myblue}{RGB}{0,0,160}
\newcommand{\RED}{} % Without red comments
\title{A contribution to the mathematical theory
	of diffraction.\\
	Part I: A note on double Fourier integrals}
\author{Rapha\"{e}l C. Assier$^{*}$, Andrey V. Shanin$^{\dagger}$ and Andrey I. Korolkov$^{\dagger}$\\
	\footnotesize{$^{*}$ Department of Mathematics, University of Manchester, Oxford Road, Manchester, {\rm M13 9PL}, UK}\\
	\footnotesize{$^{\dagger}$ Department of Physics (Acoustics Division), Moscow State University, Leninskie Gory, {\rm 119992}, Moscow, Russia}
}
\begin{document}	

%\title[{On double Fourier integrals}]{A contribution to the mathematical theory
%	of diffraction:\\ a note on double Fourier integrals}
%
%\author[R.~C.~Assier, A.~V.~Shanin \& A.~I.~Korolkov]{R.~C.~ASSIER}
%\address{Department of Mathematics, University of Manchester, \\ Oxford Road, Manchester, {\rm M13 9PL}, UK}
%\extraauthor{A.~V.~SHANIN and A.~I.~KOROLKOV }
%\extraaddress{Department of Physics (Acoustics Division), Moscow State University, \\ Leninskie Gory, {\rm 119992}, Moscow, Russia}
%
%
%\received{\recd ??. \revd ?? }

\maketitle

%\eqnobysec

\begin{abstract}
 We consider a large class of physical fields $u$ written as double inverse Fourier transforms of some \RED{functions} $F$ of two complex variables. Such integrals occur very often in practice, especially in diffraction theory. Our aim is to provide a closed-form far-field asymptotic expansion of $u$. In order to do so, we need to generalise the well-established complex analysis notion of contour indentation to integrals of functions of two complex variables. It is done by introducing the so-called bridge and arrow notation. Thanks to another integration surface deformation, we show that, to achieve our aim,  we only need to study a finite number of real points in the Fourier space: the contributing points. This result is called the locality principle. We provide an extensive set of results allowing one to decide whether a point is contributing or not. Moreover, to each contributing point, we associate an explicit closed-form far-field asymptotic component of $u$. We conclude the article by validating this theory against full numerical computations for two specific examples.  
\end{abstract}

%{ 
%\hypersetup{linkcolor = {black}} %just to make sure toc is in black
%\tableofcontents}

\section{Introduction and motivation}

Many successful mathematical techniques used in diffraction theory (for
example the Wiener--Hopf \cite{Noble1958,LawrieAbrahams2007} or the
Sommerfeld--Malyuzhinets techniques \cite{SMtechnique2007}) rely on
one-dimensional (1D) complex analysis and are based on integral transformations
that transform the physical problem at hand into a functional equation in the
complex plane. As a result, if the method is successful, the physical solution
$u$ is given as an integral in the complex plane of a known \RED{function}
$F$ depending on a complex variable $\xi$, say.

Often, both the physical field $u$ and the \RED{function} $F$ do also
depend on a strictly positive parameter, $k$ say, that in diffraction theory
can be thought of as the wavenumber. In order to implement these complex
analysis techniques with ease, it is often assumed as a starting point that
the parameter $k$ has a small positive imaginary part. As a result, the \RED{function $F$} under consideration is free from singularity on the real
line in the $\xi$ complex plane and the resulting physical field is expressed
as an integral over this real line.

To recover the relevant physical field however, one must then take the limit
$\tmop{Im} [k] \rightarrow 0$. A common problem occurring at this stage is
that the singularities of the \RED{function $F$} hit the real line in that
limit. As a consequence, and thanks to Cauchy's theorem, the integration
contour needs to (and can) be deformed or {\tmem{indented}} to avoid these
singularities, as illustrated in Figure \ref{fig:1dindentation}. For practical
purposes, it is very important to know whether the contour passes above or
below the singularities of the \RED{function $F$}.

\begin{figure}[h]
  \centering {\includegraphics[width=0.8\textwidth]{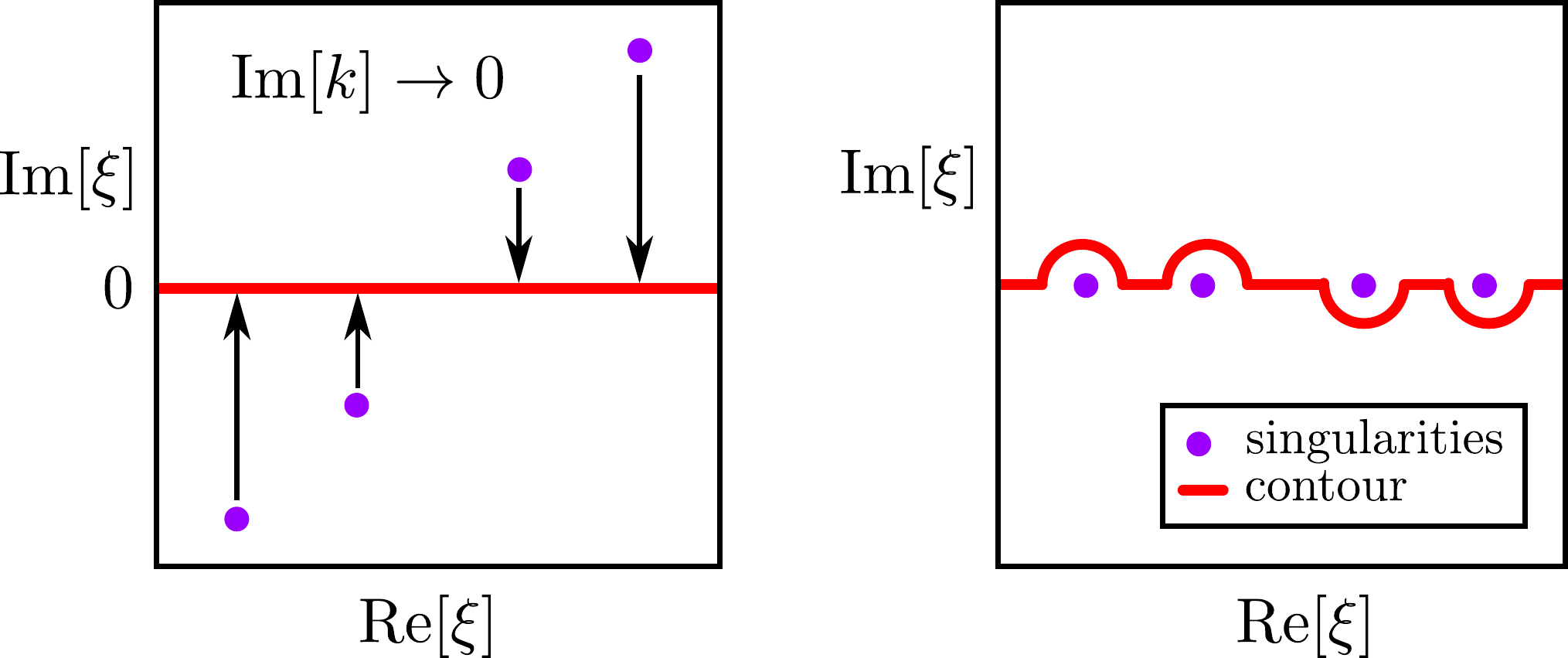}}
  \caption{Illustration of 1D complex analysis contour indentation.}
  \label{fig:1dindentation}
\end{figure}

The resulting integral cannot always be computed exactly. However, some very
powerful tools, such as the steepest descent method for instance, allow one
to obtain exact expressions for the asymptotic behaviour of the physical
solution in the far-field \RED{(see e.g.\ {\cite{Borovikov1994}})} .

The generic method described above has been very successful in diffraction
theory over the past century, resulting in elegant solutions to many canonical
problems such as diffraction by a half-plane or by a wedge, among other
building blocks of the so-called Geometrical Theory of Diffraction
{\cite{Keller1962}}, but it is not the purpose of the present work to review
this in an exhaustive manner.

The drawback of this method, however is that so far, it has mainly permitted
to solve two-dimensional time-harmonic problems, one-dimensional transient
problems or problems that can easily be reduced to those.

The main purpose of the present article is to develop a mathematical framework
to extend this method to higher-dimensional problems, such as
three-dimensional time-harmonic problems or two-dimensional transient
problems. As a consequence, we have to work in a two-dimensional (2D) complex space
and consider physical solutions $u(\bdx)$, with $\tmmathbf{x}=(x_1,x_2)\in\mathbb{R}^2$, written as a complex integral of a
\RED{function} $F(\bdxi)$, with $\tmmathbf{\xi}= (\xi_1,
\xi_2) \in \mathbb{C}^2$ over an
integration surface embedded in a 2D complex space.
More specifically, our aim is to estimate 2D Fourier integrals of the form
(\ref{eq:initial-integral}) given below. Throughout the article, the \RED{function} $F$ is assumed to be a known function. The asymptotic estimation of the double integral is built under the assumption that the physical variable $\tmmathbf{x}$ is such that 
$\tmmathbf{x} = r  \tmmathbf{\tilde x}$,  
where $ \tmmathbf{\tilde x}$  
is a fixed unit real vector, and $r$ is a large real parameter ($r \to \infty$), 
i.e.\ the \textit{far~field} is considered.

 %Note that a chapter of {\cite{Borovikov1994}} is dedicated to the asymptotic approximation of double integrals. However, this important research does not contain a procedure of estimation of the 2D Fourier 
%integrals that we need here.
%\tmcolor{blue}{{\bf AI, we should add references to Fedorjuk and some other people.}}

This work is part of the authors ongoing efforts
to apply the theory of functions of several complex variables {\cite{Shabat2}} to
diffraction theory, see for instance
{\cite{2dcontinuation2021,Assier2019a,Assier2018a,Assier2019b,Assier2019c,Ice2021,Shanin2019SommerfeldTI,kunz2021diffraction}}. A fundamental motivation behind the present work is that the methods developed here can be applied to 
the important canonical problem of diffraction by a quarter-plane studied in 
{\cite{Shanin2005,shanin1,Assier2012,Assier2012b,Assier2016,Assier2018a,Assier2019a,Assier2019c,Shanin2012}} for instance. In this case, even though the \RED{function} $F$ remains unknown, it has been shown (in \cite{Assier2018a}, say) that it has some properties known a priori. Namely, the function $F(\bdxi)$ can be analytically continued  far enough from the integration surface, and it has singular sets defined by the equations
$\xi_1^2 + \xi_2^2 = k^2$, $\xi_1 = a_j$ and $\xi_2 = b_j$ for some known constants $k$, $a_j$ and $b_j$. The behaviour of the function $F$ near these sets is known and can be either of polar or branching type. Note that $\mathbb{C}^2$ has real dimension~4, while the singular sets have real dimension~2, i.e.\ they are surfaces. Moreover, in this case, the singularities possess the so-called {\em real property\/}. A formal definition is given below in Definitions \ref{def:realproperty} and \ref{def:realpropertyfunction},
but on the practical level this means that the intersections of the singular sets 
with the real plane $(\tmop{Re} [\xi_1], \tmop{Re} [\xi_2])$, called their \textit{real traces}, are some one-dimensional curves if $k$ is real. This is not generally the case for the intersection of two two-dimensional surfaces in four dimensions, but we will focus on this special case since it is important in practice for diffraction theory.

\RED{A lot of research has been conducted on the asymptotic approximation of multiple integrals. It was reported in influential textbooks such as \cite{Borovikov1994, Wong2001-cd, Jones1982-xs, Bleistein1987-xa, Felsen1994-hb,Lighthill78}. Upon writing $\bdxi=(\xi_1,\dots,\xi_n)$, the concern is on estimating multiple integrals of the form $\int_D F(\bdxi) e^{\lambda G(\bdxi)} \mathd \bdxi$ as a large real parameter $\lambda$ tends to infinity for some finite or infinite domain $D$ in $\mathbb{R}^n$. The standard hypothesis used in these texts is that both functions $F$ and $G$ are smooth within $D$ and on its boundary. The estimation of the integral can then be done by only considering certain \textit{critical points}: saddle points in the interior of $D$ (i.e.\ points where $\nabla G=\boldsymbol{0}$) and problematic points linked to the geometry of the boundary of $D$. The former can be dealt with using the multidimensional saddle point method (also known as stationary phase method), and many different configurations need to be considered for the latter. Most books cited above base their presentation on the latter on the influential article \cite{Jones1958-nf}. Additional analysis has been performed to derive approximations for the case of saddle points being close to each other \cite{ursell_1980}, in which the possible construction of steepest descent surfaces is being discussed. Previous work by Fedoryuk \cite{fedoryuk1977saddle}, also considered such surfaces.

	Even though we will not follow this route in the present work, it has to be mentioned that, more recently, efforts have been made to obtain hyperasymptotic expansions (including exponentially decaying terms in the expansion, see e.g. \cite{Berry1991-na}) of such multiple integrals with simple saddle points in \cite{kaminski1994exponentially} and \cite{Howls1997-me}. These sophisticated approaches, also concerned with the construction of steepest descent surfaces, are particularly interesting to us as they make use of advanced ideas in multidimensional complex analysis and emphasise the link between multidimensional integral evaluations and Pham's work \cite{Pham2011} on the Picard-Lefschetz theory.  An alternative method, based on a Mellin-Barnes integral approach can be found in \cite{paris_kaminski_2001}.
	  
	However, it is rare to find similar studies where the function $F$ is allowed to have singularities within the domain $D$. Notable exceptions can be found in Jones' book \cite{Jones1982-xs}, in which he deals with \textit{isolated} singularities of $F$ and cases when $F$ is the reciprocal of a polynomial (also considered by Lighthill in \cite{Lighthill78}), as well as in \cite{Jones1971-yw}, where he deals with a specific 2D integral. However, the general case is not treated. Indeed, according to Jones in \cite{Jones1982-xs}: ``\textit{The asymptotic behaviour of Fourier transforms in n dimensions is considerably more complicated than that in one dimension. Primarily, this is because singularities occur not only at isolated points but also on curves and, in general, on hypersurfaces which may be of any dimension up to $n-1$.}''
		%Since these hypersurfaces may have cusps, conical points or other unpleasant deviations from smoothness it will be realised that any comprehensive treatment will be extremely complex. Such a discussion is beyond the scope of this book."}
	 In the present work, we aim to give a reasonably general treatments of 2D Fourier integrals for which the singularities of the integrand lie on a set of (potentially intersecting) curves.
}

The present article (Part I) is meant to lay
out a general mathematical framework. 
Part II of this work {\cite{Part6B}} will be dedicated to the specific
example of the quarter-plane and will highlight the strength and relative simplicity of the present approach.

%\RED{WRITE STHG ABOUT BRIDGE, ADDITIVVE and Active/inactive in response to point 3 of reviewer2?}

\RED{There are three main notions of importance used in this work: the notion of \textit{active
	and inactive points}, the \textit{bridge and arrow} representation and the \textit{additive
	crossing} property. The notion of active and inactive points was used in \cite{Ice2021}, but only for a very
	specific case. In the present work, we give a much more general account of this
	notion, and we prove general results regarding the activity or not of singular
	points.
	
	The bridge and arrow notation was introduced for the first time in the
	Appendix B of \cite{Assier2019c} and has not been used since. It was necessary to introduce
	it at the time to allow us to deal with some technical difficulties linked to
	the stencil equations we were considering then. Though a fairly general account
	of this tool was given in \cite{Assier2019c}, the current presentation has been
	refined and the notation has been simplified. With more emphasis given on the
	complementary role played by the bridge and by the arrow respectively.
	
	The additive crossing property was first discovered/introduced in \cite{Assier2018a}, and
	subsequently arised in the follow-up work \cite{Assier2019c}, and perhaps more surprisingly
	in a somewhat unrelated work \cite{2dcontinuation2021}. However its importance (or lack of importance as we will see) to the far-field asymptotics of the waves under consideration was not considered in those articles. 
	
	Most importantly, before the present work, no effective link seemed to exist
	between these three notions. Though, as we will make clear in this article, they are
	deeply connected. Moreover, this connection can be exploited to obtain fairly
	general results about the asymptotic behaviour of 2D Fourier integrals.
}

The key result of the present work is as follows. The asymptotic estimation of a 2D Fourier 
integral can be performed by applying the {\em locality principle}: the terms that are not 
exponentially vanishing in the far-field can be obtained by considering small neighbourhoods of several 
``special points'' only. These special points are found to be the intersections of the real traces of the singular sets 
(not necessarily all of them) and the so-called \textit{saddles on singularities}.  The 
leading terms of the asymptotic estimations 
can then be found by computing some simple standard integrals. 

The rest of the article is organised as follows. In Section \ref{sec:sec2}, we specify the type of
integrals to be considered and make some crucial assumptions on the \RED{functions $F$} under consideration, namely that they have the so-called real property. In Section \ref{sec:bridgeandarrow},
we develop a mathematical technique to describe how a two-dimensional
surface of integration bypasses singularities in $\mathbb{C}^2$ and show that
this process can be effectively described by a concise notation: {\tmem{the
bridge and arrow}}. In Section \ref{sec:sec4}, we introduce the notion of active and inactive points and show that the only points that have the potential of contributing towards the far-field asymptotics of $u$ are the active non-additive transverse crossings and the active saddles \RED{on singularities} of the real trace of $F$. In Section \ref{sec:nearactive} we perform a local consideration of these points and provide an explicit closed-form estimation of local Fourier-type integrals in their vicinity.
 %\RED{We also provide a formal link between certain active points and the standard multidimensional stationary phase technique.} 
 In Section \ref{sec:integrationawayfromactive} we construct a global deformed integration surface and use it to show that the asymptotic expansion of the physical field $u$ can simply be written as the sum of the local contributions computed in Section \ref{sec:nearactive}.
In Section \ref{sec:simplenontrivialexamples} we illustrate the validity of our theory with two simple but non-trivial examples.
\RED{Finally, in Appendix~\ref{sec:complicated}, we comment briefly on the application of our technique to more complicated integrals.}

%\BLUE{To give a bit more context, the idea of bridge and arrow was first introduced
%in {\cite{Assier2019c}} as an appendix, while a rigorous 
%	formulation of the locality principle 
%	and the notion of active point 
%was first introduced in {\cite{Ice2021}}.  Both papers deal with very specific
%configurations. Here we will give a much more general account of these two
%fundamental concepts. REMOVE and EXPAND a bit earlier.}  

%%%%%%%%%%%%%%%%%%%%%%%%%%%%%%%%%%%%%%%%%%%%%%%%%

\section{Integrals under consideration} \label{sec:sec2}

%\tmcolor{blue}{In Sections~\ref{sec:sec2} and \ref{sec:bridgeandarrow} we introduce some notations that will be 
%used later for building the required asymptotics of the 2D Fourier integrals.  }

\subsection{Double Fourier integrals}

Throughout this article, we will consider a function $F (\tmmathbf{\xi}; \varkappa)$
depending on two complex variables denoted $\tmmathbf{\xi}= (\xi_1, \xi_2) \in
\mathbb{C}^2$ and on a small parameter $\varkappa> 0$, 
$\varkappa \to 0$ (this limit will from now on be denoted $\varkappa \searrow 0$). For wave motivated problems, this parameter mimics a 
small energy dissipation in the medium making it possible to use the limiting absorption 
principle. More precisely, the wavenumber parameter $k$ may be written as 
$k= k_0 + i \varkappa$,  
where $k_0$ is real and strictly positive, and $i \varkappa$ is a vanishing imaginary part. 
We also introduce the notation
\[
F(\bfxi) \coloneqq F(\bfxi ; 0).
\]
For $\varkappa>0$, the function $F(\bfxi ; \varkappa)$ is assumed to be holomorphic
in a neighbourhood of $\mathbb{R}^2$, and assumed to grow at most algebraically
at infinity with respect to $\bfxi$. 
This allows one to define a function $u (\tmmathbf{x};\varkappa)$, depending on two real variables denoted $\tmmathbf{x}= (x_1, x_2) \in
\mathbb{R}^2$ and on $\varkappa>0$, by 
\begin{align}
  u (\tmmathbf{x};\varkappa) = 
   \iint_{\mathbb{R}^2} F (\tmmathbf{\xi}; \varkappa) e^{-
  i\tmmathbf{x} \cdot \tmmathbf{\xi}} \mathd \tmmathbf{\xi},  \label{eq:initial-integral}
\end{align}
where, by $\mathd \tmmathbf{\xi}$, we mean\footnote{It is not strictly necessary to use this wedge product of differential forms to write (\ref{eq:initial-integral}), we could just use $\mathd \xi_1 \mathd \xi_2$, however, since we will later deform this integration surface in $\mathbb{C}^2$, it is easier to start with this. See \cite{Jo-2018} for a gentle introduction on differential forms, and \cite{Shabat2} for their use in high-dimensional complex integration.} $\mathd \xi_1 \wedge \mathd \xi_2$.

The two functions $u$ and $F$ are, by construction, related to each other via
a double Fourier transform. Indeed, up to a multiplicative constant depending
on the chosen convention, (\ref{eq:initial-integral}) is a double inverse
Fourier transform. Hence, we will refer to $F$ as the {\tmem{\RED{Fourier transform}}, while $u$ will be called the {\tmem{physical field}}. 

Assume that, as $\varkappa \searrow 0$, the singularities of $F$
hit the real plane so that the integral (\ref{eq:initial-integral}) becomes
ill-defined. To make sense of the function 
\begin{align}
u (\tmmathbf{x}) \coloneqq \lim_{\varkappa \searrow 0 } u (\tmmathbf{x};\varkappa),
\label{eq:limitdefofu}
\end{align} 
we need to {\tmem{indent}} the surface of integration of
(\ref{eq:initial-integral}) {\tmem{around}} the singularities of $F$. 
This can be done thanks to the 2D analogue of Cauchy's theorem \cite{Shabat2}:
one can deform the integration surface continuously, provided that the integrand is an 
analytical function of two variables, and the surface never hits the singular sets of the 
integrand. The value of the integral remains the same during this deformation.   
This theorem is non-trivial\footnote{A similar fact is not valid in general for integration 
over contours with real dimension~1 in $\mathbb{C}^2$, see e.g.\ \cite{2dcontinuation2021} for a special case when it is valid.} and is based on the closedness of a corresponding differential 
2-form.  

As a result of the deformation of the surface of integration we obtain
\begin{align}
  u (\tmmathbf{x})  =  \iint_{\bdG} F (\tmmathbf{\xi})
  e^{- i\tmmathbf{x} \cdot \tmmathbf{\xi}} \, \mathd \tmmathbf{\xi}  
  \label{eq:FourierIntegralIndented}
\end{align}
for some two-dimensional surface of integration $\bdG \subset
\mathbb{C}^2$ that does not intersect any of the singularities of $F$. We 
assume that $\bdG$ coincides with the real plane of $\tmmathbf{\xi}$
everywhere except in some neighbourhood of the singularities of $F$, and
that $\bdG$ bypasses the singularities of $F$ in some sense. The surface of integration $\bdG$ is assumed to be oriented. The orientation is taken in 
	such a way that, for any portion $\boldsymbol{\mathcal{D}}$ of $\bdG$ coinciding with a portion of $\mathbb{R}^2$ the integral of 
	$\iint_{\boldsymbol{\mathcal{D}}}\dots \mathd \tmmathbf{\xi}$ is the same as the integral of $\iint_{\boldsymbol{\mathcal{D}}} \dots \mathd\xi_1 \mathd\xi_2$. 
Here we have used
the words {\tmem{indent}} and {\tmem{around}} in order to make an analogy with
contour deformation in one complex variable, though, as we will see, things
are not that simple in $\mathbb{C}^2$.

The procedure for the estimation of the integral (\ref{eq:FourierIntegralIndented}) can be summarised as follows. The surface $\bdG$ will be deformed 
into a surface $\bdG'$, on which the integrand is exponentially 
vanishing everywhere as $r\to\infty$, except in the neighbourhoods of several {\em special points}.
The estimation of the integral near these points will provide non-vanishing terms
of the physical field $u$ as $r\to\infty$.

Thus, we will have to describe two successive deformations of the integration surfaces: 
\[
\mathbb{R}^2 \longrightarrow \bdG \longrightarrow \bdG'.
\]
The first deformation accompanies the limiting process $\varkappa \searrow 0$, while, during the second, the value of $\varkappa = 0$ remains unchanged.

%%%%%%%%%%%%%%%%%%%%%%%%%%%%%%%%%%%%%%%%%%%%

\subsection{Functions with the real property}

Before being more specific about $\bdG$ and $\bdG'$, we will make a few more
assumptions on the function $F(\bdxi)$. We start by assuming
that $F$ is holomorphic on $\mathbb{C}^2$ everywhere apart from on its
singularity set. \RED{This singularity set, that we call $\sigma$, is a (complex) analytic set and can therefore be written as a finite union of irreducible analytic sets that we call
	{\tmem{irreducible singularity sets\/}} $\sigma_j$:
	\[
	\sigma = \cup_j \sigma_j,
	\]
	each $\sigma_j$ corresponding to a polar or a branch
	set. Since they are themselves analytic sets, each irreducible singularity set $\sigma_j$ can be described by 
	$\sigma_j =\{ \tmmathbf{\xi} \in \mathbb{C}^2, g_j (\tmmathbf{\xi}) = 0 \}$ for some
	holomorphic function $g_j$ that we call the defining function of $\sigma_j$. For simplicity, throughout this article, we also assume that $\sigma_j$ does not have any singular points (i.e.\ there are no points where $\nabla g_j=0$ ). Because of this assumption, such irreducible singularity sets can be shown to be complex manifolds embedded in
	$\mathbb{C}^2$ and are therefore purely two-dimensional\footnote{From now on, whenever we talk about
		dimension, we will talk about real dimension.}.  In other words, the irreducible singularity sets can be seen as smooth two-dimensional surfaces embedded in four dimensions. For more on analytic sets and their irreducible components, we refer the reader to \cite{Shabat2} (Chapter II \S 8) or to the more involved monograph \cite{Chirka1989-bo}. }

We will further assume that each irreducible singularity has
the so-called {\tmem{real property}}.

\begin{defi}%[Real property]
	\label{def:realproperty}
  We say that an irreducible singularity set $\sigma_j$ has the {\em real property} if
  its defining holomorphic function $g_j$  is real whenever its arguments are real. As a consequence, its first order partial derivatives have the same property and we can write: 
  \begin{align*}
    \tmmathbf{\xi} \in \mathbb{R}^2  \Rightarrow  g_j (\tmmathbf{\xi}) \in
    \mathbb{R}, \hspace{1em} \tva(\bdxi)=\frac{\partial g_j}{\partial \xi_1}
    (\tmmathbf{\xi}) \in \mathbb{R} \hspace{1em} \text{ and } \hspace{1em} 
    \tvb(\bdxi)=\frac{\partial g_j}{\partial
    \xi_2} (\tmmathbf{\xi}) \in \mathbb{R}.
  \end{align*}
Moreover, we insist that $g_j(\bdxi)$ is regular for $\bdxi\in\mathbb{R}^2$, that is $\tva^2+\tvb^2\neq0$.
\end{defi}
As a consequence, the intersection between $\mathbb{R}^2$ and an irreducible singularity set
$\sigma_j$ having the real property is a one-dimensional curve. Note that this is not normally the case. Indeed, in general, the intersection of a
manifold $g_j(\bdxi)= 0$ with the $\tmmathbf{\xi}$ real plane is a set of discrete points.
In many practical situations, however, the singularities do possess the real property, 
as is the case for the quarter-plane diffraction problem for instance. 
\RED{We note that if the singularities did not possess the real property, then their intersection with $\mathbb{R}^2$ would be of dimension zero, and therefore they would be isolated within $\mathbb{R}^2$. As discussed in introduction, this configuration has already been studied by Jones in \cite{Jones1982-xs}.}     

\begin{defi}%[Real trace]
	The one-dimensional curve resulting from the intersection of an irreducible
	singularity set $\sigma_j$ having the real property and $\mathbb{R}^2$ is
	called the {\em real trace} of $\sigma_j$ and denoted by $\sigma'_j$:
	\[
	\sigma'_j = \sigma_j \cap \mathbb{R}^2.
	\]
	The real trace of a singularity set $\sigma=\cup_j\sigma_j$, denoted $\sigma'$, is defined as the union of the real traces of its irreducible components: $\sigma'=\cup_j \sigma'_j$.
\end{defi}

To describe the limiting process $\varkappa \searrow 0$ we should consider the defining 
functions of the singularities of $F(\bdxi;\varkappa)$, also depending on $\varkappa$, denoted $g_j(\tmmathbf{\xi} ; \varkappa)$, and such that $g_j(\bfxi) = g_j(\bfxi ; 0)$. Moreover, we require  that the partial derivative 
$\ptl_\varkappa g_j(\bfxi ; 0)$ should be purely imaginary when $\bdxi$ is real. We can therefore extend the concept of real property to the functions $F
(\tmmathbf{\xi}; \varkappa)$ of interest as follows.

\begin{defi}
	\label{def:realpropertyfunction}
  A function $F (\tmmathbf{\xi}; \varkappa)$ whose irreducible singularities when
  $\varkappa = 0$ all have the real property is said to have the real
  property.
\end{defi}

%This concept of functions with the real property might seem restrictive, but
%it actually describes a wide range of functions that occur in practice. 
We give below two simple examples of functions having the real property.

\begin{exa}
  \label{ex:ex4}The function 
  \[  
  F (\bfxi; \varkappa) = 
  \left( (1 + i \varkappa)^2 - \xi_1^2 - \xi_2^2 \right)^{-1}
  \] 
  has the real property.
  Its only irreducible singularity when $\varkappa=0$, the
  complexified circle, is given by $\sigma = \{ \tmmathbf{\xi} \in
  \mathbb{C}^2, \xi_1^2 + \xi_2^2 - 1 = 0 \}$, while its real trace
  $\sigma'$ is simply the unit circle centred at the origin (see
  Figure \ref{fig:realtracesexamples}, left). This singularity is a polar set.
\end{exa}

\begin{exa}
  \label{ex:ex5}The function 
  \[
  F (\tmmathbf{\xi}; \varkappa) = \frac{( \xi_1 - 1 - i \varkappa)^{1 / 2} }{
  \xi_2 + 1 + i \varkappa}
  \] 
  has the real property. 
  When $\varkappa = 0$, it has two irreducible singularities
  $\sigma_1 = \{ \tmmathbf{\xi} \in \mathbb{C}^2 , \xi_1 = 1 \}$ and
  $\sigma_2 = \{ \tmmathbf{\xi} \in \mathbb{C}^2, \xi_2 = - 1 \}$. Note that
  $\sigma_1$ is a branch set, while $\sigma_2$ is a polar set. Both real
  traces $\sigma_1'$ and $\sigma_2'$ are straight lines (see 
  Figure~\ref{fig:realtracesexamples}, right).
\end{exa}

\begin{figure}[h]
  \centering{\includegraphics[width=0.6\textwidth]{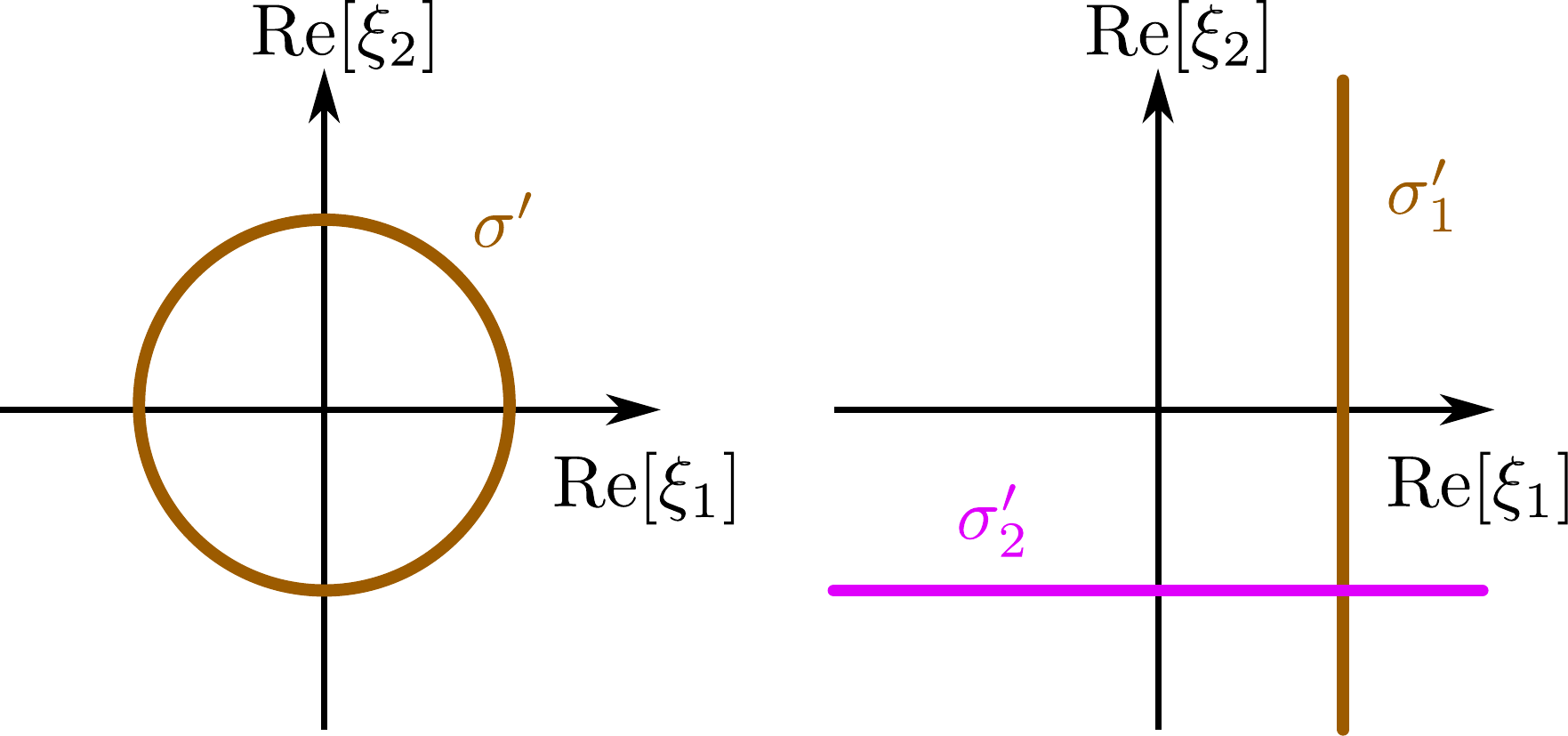}}
  \caption{Illustration of the real traces of the functions given in Examples
  \ref{ex:ex4} (left) and \ref{ex:ex5} (right)}
\label{fig:realtracesexamples}
\end{figure}

Throughout the rest of the article, we consider a function $F (\tmmathbf{\xi};\varkappa)$ with the real property and a singularity set of $F(\bdxi)$ denoted by $\sigma  = \cup_j \sigma_j$, where the irreducible components $\sigma_j$ are described by 
$\{ \tmmathbf{\xi} \in \mathbb{C}^2, g_j (\tmmathbf{\xi}) = 0 \}$ for some
entire functions $g_j$.

%%%%%%%%%%%%%%%%%%%%%%%%%%%%%%%%%%%%%%%%%%%%%%%%%%%%%

%%%%%%%%%%%%%%%%%%%%%%%%%%%%%%%%%%%%%%%%%%%%%%%%%%%%%%%%5

\section{The bridge and arrow notation}\label{sec:bridgeandarrow}

%As in the one complex variable case, to evaluate the integral
%(\ref{eq:FourierIntegralIndented}), it is very important to know the relative
%position of $\tmmathbf{\Gamma}$ and the singularities of $F$. This notion of
%relative position is reasonably straightforward in one complex variable (above
%or below), but it becomes much more subtle with two complex variables. In the
%following section we will introduce a very useful and compact notation that
%entirely specifies this relative position: {\tmem{the bridge and arrow
%		notation}}. This notations was first introduced briefly in the Appendix B of
%{\cite{Assier2019c}}, but given its importance in what follows, it deserves a
%more prominent and general presentation.

In this section, our aim is to describe the relative position of the surface of integration $\bdG$
or $\bdG'$ with respect to the singularities of $F$.
In order to do so, we need a convenient tool to describe such surfaces and the continuous deformation between them.  
Here we focus on the initial surface of integration $\bdG$, but the surface $\bdG'$ can be described in a similar way.

\subsection{A parametrisation of the surface of integration}\label{sec:surfaceparam}

We wish to consider an indented surface of integration $\bdG$
that is close to $\mathbb{R}^2$, i.e. a small perturbation from the
initial integration surface. The surface $\bdG$ is also expected not to intersect with the singularity set $\sigma$ of the \RED{Fourier transform} $F$.
Moreover, we insist that the surface be piecewise smooth and {\em flattable}, i.e.\
that it can can almost be flattened to the real plane. A more rigorous definition of flattability will be given shortly in Definition~\ref{def:flattability}. 

Let 
$\bdG$ be
parametrised by a complex position vector 
$\tmmathbf{\xi}_{\bdG}: \mathbb{R}^2 \rightarrow \mathbb{C}^2$ 
that can be written as
\begin{align}
  \tmmathbf{\xi}_{\bdG} (\xi_1^r, \xi_2^r) 
  =  \left( 
    \xi_1^r + i \eta_1 (\xi_1^r, \xi_2^r),
    \  
    \xi_2^r + i \eta_2 (\xi_1^r, \xi_2^r)
  \right),  \label{eq:parametrisation}
\end{align}
where $(\xi_1^r , \xi_2^r) \equiv \bfxi^r \in \mathbb{R}^2$, $\bfxi^r = {\rm Re}  [\bfxi ]$ and the scalar functions $\eta_1$ and $\eta_2$ are assumed to be small, bounded and piecewise smooth real functions. The non-intersecting property can be reformulated by saying that $\bdxi_{\bdG}(\mathbb{R}^2)\cap \sigma = \emptyset$. 
%The surface should not intersect with the singularity set $\sigma$.

\begin{defi}\label{def:flattability}
	We say that such a surface $\bdG$ is {\em flattable} (or has the flattability property) with respect to a singularity set $\sigma$ if for any $0< \eps \le 1$, the scaled surface $\bdG_\eps$ defined by the scaled functions $\eps\eta_{1,2}$ and parametrised by \[
	\bfxi_{\bdG_{\eps}}(\bfxi^r)=
	\left( 
	\xi_1^r + i \eps \eta_1 (\bfxi^r), 
	\ 
	\xi_2^r + i \eps \eta_2 (\bfxi^r)
	\right),
	\]
	does not intersect the singularity set $\sigma$.
\end{defi}

%The {\em flattability\/} property is as follows.
%we require that the surface $\bdG_\eps$ 
%defined by the scaled function $\eps \tmmathbf{\eta}$: 
%\[
%\bfxi_{\eps \bdG}(\bfxi^r)=
%\left( 
%    \xi_1^r + i \eps \eta_1 (\bfxi^r), 
%    \quad 
%    \xi_2^r + i \eps \eta_2 (\bfxi^r)
%  \right)
%\]  
%for any $ 0< \eps \le 1$ does not cross the singularity set $\sigma$.

Therefore, an admissible\footnote{
			The existence of such admissible contour is directly linked to the notion of
			``complex displacement'' developed in Pham's work {\cite{Pham2011}} chapter
			IX section 1.2 p177.} two-dimensional surface of
integration $\bdG$ embedded in $\mathbb{C}^2$ is completely
determined by a two-dimensional real vector field 
$\tmmathbf{\eta}: \mathbb{R}^2 \rightarrow
\mathbb{R}^2$ defined by $\tmmathbf{\eta}= (\eta_1, \eta_2)$. The class of surfaces we have defined is very restrictive indeed. 
However, we will see that this class is enough to establish the locality principle and build
the estimation of the leading terms of the physical field~$u$.

% We will need more complicated  
%integration surfaces in Section~\ref{sec:sec6}, where we formulate 
%(in the form of a conjecture) a significantly more subtle fact.   

To summarise, the vector field $\tmmathbf{\eta}$
should be chosen in such a way that $\bdG$ has 
two important properties: 

\begin{itemize}

\item[a)] it has an empty intersection with the singular set of $F$ and is flattable;

\item[b)] it can be obtained as a continuous deformation of $\mathbb{R}^2$ 
in the limiting process $\varkappa \to 0$, i.e.\ 
it is homotopic to the physically motivated integration surface. 

\end{itemize}

A sufficient condition for a) is 
given by Theorem~\ref{th:flattened}, while a sufficient condition for b) 
is formulated below as a rule for choosing the bridge symbol. 
The bridge symbol is a very useful and compact notation that
 was first introduced briefly in the Appendix B of
{\cite{Assier2019c}}, but given its importance in what follows, it deserves a
more prominent and general presentation.

\begin{theorem}
\label{th:flattened}
Let $\bfeta^\sharp (\bfxi^r)$ be a piecewise smooth bounded real vector field, chosen in such a way that, at any point of the real trace $\sigma'$, it is not zero nor is it tangent to any irreducible component of $\sigma'$. Then, there exists a smooth positive real factor function $f(\bfxi^r)$ such that the vector field
\[
\bfeta(\bfxi^r) = f(\bfxi^r) \, \bfeta^\sharp (\bfxi^r)
\]
defines a flattable surface $\bdG$ not crossing $\sigma$. 
\end{theorem}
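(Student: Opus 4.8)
The plan is to fix the given vector field $\bfeta^\sharp$ and show that shrinking it towards the real plane by a suitable positive scalar $f$ removes any intersection with the singularity set $\sigma$. The starting point is the observation that a crossing can only happen near the real trace $\sigma'$: away from $\sigma'$, i.e.\ outside some tubular neighbourhood $\mathcal{N}$ of $\sigma'$ in $\mathbb{C}^2$, the singularity set $\sigma$ is bounded away from $\mathbb{R}^2$, so for $\bfeta$ small enough (in sup-norm) the surface $\bdG$ stays inside the complement of $\sigma$. The only delicate region is therefore a neighbourhood of each point $\bfxi_0^r\in\sigma'$, where we work locally with the defining function $g_j$ of the relevant irreducible component.

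The key local step is a linearisation argument. Near $\bfxi_0^r\in\sigma'_j$, expand $g_j$ along the displaced surface: for $\bfxi^r$ close to $\bfxi_0^r$,
\[
g_j\!\left(\bfxi^r + i f(\bfxi^r)\,\bfeta^\sharp(\bfxi^r)\right)
= g_j(\bfxi^r) + i f(\bfxi^r)\Big(\tva(\bfxi^r)\,\eta_1^\sharp(\bfxi^r) + \tvb(\bfxi^r)\,\eta_2^\sharp(\bfxi^r)\Big) + O\!\left(f^2\right).
\]
By the real property (Definition~\ref{def:realproperty}), $g_j(\bfxi^r)\in\mathbb{R}$ and the bracketed expression $\tva\,\eta_1^\sharp + \tvb\,\eta_2^\sharp$ is real; moreover this bracket is precisely the normal component of $\bfeta^\sharp$ along $\sigma'_j$, since $(\tva,\tvb)=\nabla g_j$ is (up to scaling) the normal to $\sigma'_j$ within $\mathbb{R}^2$. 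The non-tangency hypothesis on $\bfeta^\sharp$ guarantees that this bracket is bounded away from zero on $\sigma'_j$, hence on a small enough neighbourhood. Therefore the imaginary part of $g_j$ on $\bdG$ is, to leading order, $f(\bfxi^r)\cdot(\text{something bounded away from }0)$, which is nonzero as soon as $f$ is chosen small enough to dominate the $O(f^2)$ remainder — so $g_j\neq0$ on $\bdG$ near $\bfxi_0^r$, i.e.\ $\bdG$ does not meet $\sigma_j$ there. The same estimate, applied with $\eps f$ in place of $f$ for every $0<\eps\le1$, shows the scaled surfaces $\bdG_\eps$ also avoid $\sigma$, giving flattability.

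To pass from this local statement to a global $f$, I would choose, by compactness considerations, a locally finite cover of $\sigma'$ (which is a closed one-dimensional curve, possibly with several branches and intersection points) by such coordinate neighbourhoods, obtain a local bound $f_\alpha$ valid on each, and patch them together with a smooth partition of unity (or simply take $f = \min_\alpha f_\alpha$ smoothed out, or an infimum combined with a bump) to produce a single smooth positive $f(\bfxi^r)$ that works everywhere; outside $\mathcal{N}$ one lets $f$ taper to a small constant so that the ``away from $\sigma'$'' estimate of the first paragraph applies. One extra point to check at the crossing points of $\sigma'$, where two branches $\sigma'_j$ and $\sigma'_k$ meet: there $\bfeta^\sharp$ must be non-tangent to \emph{each} branch, and one runs the above argument for $g_j$ and $g_k$ separately on the same neighbourhood, taking the smaller of the two admissible scales.

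The main obstacle I anticipate is the global patching in the presence of non-compactness and of the intersection points of $\sigma'$: the local bounds on $f$ may degenerate as one approaches a crossing or goes to infinity along an unbounded branch, so the argument needs either a properness/growth assumption (consistent with the earlier hypothesis that $F$ grows at most algebraically and that the $\bfeta_i$ are bounded) or a careful locally-finite construction ensuring $\inf_\alpha f_\alpha$ stays positive on every compact set while still controlling behaviour at infinity. The purely local linearised estimate is routine; marshalling it into one globally defined smooth $f$ with all the stated properties is where the real care is needed.
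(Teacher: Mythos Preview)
Your proposal is correct and follows essentially the same approach as the paper: split into points off $\sigma'$ (where smallness of $\bfeta$ suffices) and points on $\sigma'$ (where the Taylor expansion of $g_j$ gives a nonzero imaginary part from the non-tangency hypothesis), then observe flattability is automatic since the same estimate works for $\eps f$. If anything, your discussion of the global patching via a locally finite cover and partition of unity, and your explicit treatment of crossing points and the non-compactness issue, is more careful than the paper's own proof, which handles the globalisation rather informally by prescribing $f$ on a strip around each $\sigma_j'$ to be constant along segments perpendicular to the trace.
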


\begin{proof}
Let $\bdeta^\sharp$ be a vector field with all the required properties.
Let us start by considering a real point $\bdxi^\star\in\mathbb{R}^2$ that does not belong to the real trace $\sigma'$. By definition $\bdxi^\star$ does not belong to $\sigma$ either and in this case, we just need to choose $f(\bdxi^\star)$ small enough (smaller than the distance between $\bdxi^\star$ and $\sigma$) to ensure that $f(\bdxi^\star) \bdeta^\sharp(\bdxi^\star)\notin\sigma$.

Consider now a real point $\bdxi^\star$ belonging to some component $\sigma_j'$ of the real trace $\sigma'$ (i.e.\ $g_j(\bdxi^\star)=0$) and define the two real quantities
\[
\tva^\star \equiv \frac{\ptl g_j(\bfxi^\star)}{\ptl \xi_1} , 
\quad \text{ and } \quad 
\tvb^\star \equiv \frac{\ptl g_j(\bfxi^\star)}{\ptl \xi_2}. 
\]  
Note that the fact that $\bdeta^\sharp$ is not tangent to $\sigma_j'$ at $\bdxi^\star$ can be succinctly rewritten as $\tva^\star \eta_1^\sharp+\tvb^\star \eta_2^\sharp\neq0$. Let us now consider a vector field $\bdeta=f(\bdxi^\star)\bdeta^\sharp$ for some small $f(\bdxi^\star)>0$. We can hence use a Taylor expansion to find that
\[g_j(\xi_1^\star+i \eta_1,\xi_2^\star+i \eta_2)=if(\bdxi^\star)(\tva^\star \eta_1^\sharp+\tvb^\star \eta_2^\sharp)+\mathcal{O}((f(\bdxi^\star))^2).\]
Since the first term in the expansion is non-zero, it is possible to choose $f(\bdxi^\star)$ small enough such that the higher-order terms cannot compensate it, and hence such that $g_j(\bdxi^\star+i\bdeta)\neq0$ and $\bdxi_\Gamma(\bdxi^\star)\notin\sigma_j$.

For each point of $\bdxi^\star\in\mathbb{R}^2$ we have hence constructed a vector field $\eta(\bdxi^\star)=f(\bdxi^\star)\bdeta^\sharp(\bdeta^\star)$ that defines a surface $\bdG$ not intersecting $\sigma$. Note that each time we needed to choose $f$ small enough, therefore the surface $\bdG_\eps$ for $0<\eps\le1$ does also possess the non-intersecting property and $\bdG$ is flattable.

In practice, to make sure that $f$ is continuous as one approaches a singular trace, one should proceed as follows. As described above, find a suitable function $f$ that is continuous along the singular trace $\sigma_j'$. Then consider a small curved strip around this singular set, on which $f$ can be chosen as follows. We insist that $f$ remains constant on each segment perpendicular to $\sigma_j'$. With this we can then patch together, and in a continuous manner the function $f$ at the singular points and at the non-singular points.
%and assume without loss of generalities that ${\rm a} \ne 0$.
%
%
%The sketch of the proof is as follows. Consider a real point $\bfxi^\star$
%not belonging to $\sigma'$.
%It is obvious that, to make the surface $\bdG$ not crossing 
%$\sigma$ in a neighborhood of $\bfxi^\star$, it is enough to take $f$ small enough. 
%Then, consider a point $\bfxi^\star$
%belonging to some component $\sigma'_j$.
%Let be 
%\[
%{\rm a} \equiv \frac{\ptl g_j(\bfxi^r)}{\ptl \xi_1} , 
%\qquad 
%{\rm b} \equiv \frac{\ptl g_j(\bfxi^r)}{\ptl \xi_2} . 
%\]  
%Let be ${\rm a} \ne 0$.
%Solve the equation 
%\[
%g_j (\xi_1 , \xi_2^\star + i \eta_2)  =0 
%\]
%with respect to $\xi_1$. The 
%result is 
%\begin{equation}
%\xi_1^\dag (\xi_2^\star + i \eta_2) 
%= \xi_1^\star - i \frac{{\rm b}}{{\rm a}} \eta_2 + O(\eta_2^2).
%\label{eq:remainder}
%\end{equation}
%Note that $\eta_2 = f (\bfxi^\star) \eta_2'(\bfxi^\star)$.
%Note also that, since $\bfeta^\sharp$ is not tangential to $\sigma_j'$, 
%\[
%\eta_1^\sharp + \frac{{\rm b}}{{\rm a}} \, \eta_2^\sharp \ne 0. 
%\] 
%Take $f(\bfxi^\star)$ small enough to make the quadratic remainder 
%$O(\eta_2^2)$ 
%of (\ref{eq:remainder})
%smaller than $f(\eta_1^\sharp + {\rm b}  \eta_2^\sharp / {\rm a})$.
%Then, $\xi_1^\star + \eps \eta_1$ cannot be equal 
%to $\xi_1^\dag (\xi_2^\star + i \eps \eta_2)$
%for $0<\eps \le 1$, and thus the surface 
%$\bdG_{\eps}$ cannot cross $\sigma_j$ over the found neighborhood of~$\bfxi^\star$. 
%{\link{\qedsymbol}}
\end{proof}

Let us now illustrate that the condition of $\bfeta^\sharp$ not being tangent to any of the $\sigma_j'$ is important. Consider for instance the case of an irreducible singularity component $\sigma_j$ defined by the equation
\[
g_j (\xi_1 , \xi_2) =  \xi_1 - \xi_2^2, 
\]  
and consider the point $\bfxi^\star = (0,0)$ belonging to its real trace $\sigma_j'$. Note that the vector $(0,1)$ is tangent to 
the parabola $\sigma_j'$ at $\bdxi^\star$. 
Let us choose the vector field $\bfeta^\sharp$ to be constant and equal to $(0, 1)$ in a neighbourhood of $\bdxi^\star$. It is hence tangent to $\sigma_j'$ at $\bfxi^\star$. 
Then, for any positive small smooth bounded function $f(\bdxi^r)$,
there is a real point near $\bfxi^\star$ over which the surface $\bdG$ defined by the vector field $\bdeta(\bdxi^r)=f(\bdxi^r)\bdeta^\sharp(\bdxi^r)$ intersects $\sigma_j$.
Indeed, considering the set of points $\bdxi^r$ that can be written $\bdxi^r=(\xi_1^r,0)$, we find that $g_j(\bdxi_{\bdG}(\bdxi^r))=\xi_1^r+(f(\bdxi^r))^2$. Hence upon choosing $\xi_1^r=-(f(\xi_1^r,0))^2$, we have that $\bdG$ intersects $\sigma_j$ at $\bdxi^r$.
%\tr{(\textbf{RCA}: note the sign change compared to AVS draft here)}

From now on, we assume that each integration surface is described by a vector field 
$\bfeta$ obeying the condition of Theorem \ref{th:flattened}, i.e.\ it is non-zero on $\sigma'$
and is non-tangent to any component $\sigma'_j$. Theorem~\ref{th:flattened} is important for building the surfaces 
$\bdG$ and $\bdG'$. It shows that each surface of integration belonging to the important class considered here can be described by a vector field $\bfeta$ obeying conditions that are easy to check. We stress that a full explicit knowledge of the field $\bfeta$ can sometimes be needed, for instance when one has to compute the integral 
(\ref{eq:FourierIntegralIndented}) numerically, as will be done to illustrate the validity of our theory in Section \ref{sec:simplenontrivialexamples}. 

From a theoretical point of view, the vector field $\bdeta$ is not easy to visualise graphically. Hence, below, we develop two useful notations enabling one to succinctly and graphically represent 
some important properties of $\bfeta$: they are the {\em bridge\/}
notation and the {\em arrow\/} notation. These notations only provide a small portion of information about $\bfeta$. The information provided by the bridges is a purely topological property of $\bdG$: it shows how $\bdG$ bypasses 
the singularities of $F$. 
The role of the bridges is very similar to the semi-loops in 
Figure~\ref{fig:1dindentation} (and they look similar). 
The role of the arrows is more subtle. They provide information about exponential growth and decay of the integrand and have no analogue in 1D complex analysis.

%%%%%%%%%%%%%%

\subsection{Introduction of the bridge and arrow symbol} \label{sec:introbridgesymbol}

Consider a surface $\bdG$ belonging to the flattable class introduced in Definition \ref{def:flattability} and assume that the vector field $\bfeta$ describing this surface obeys the conditions of Theorem~\ref{th:flattened}. 
According to these conditions,
$\bfeta$ is continuous, non-zero on the real trace $\sigma'$ and non-tangent to any of the real trace components $\sigma'_j$. Let us focus on a specific real trace component $\sigma'_j$ and consider a real point $\bfxi^\star\in\sigma_j'$.

Because it is not tangent to $\sigma_j'$, the vector field $\bfeta$ is restricted to be directed to the left or to the right of $\sigma'_j$ everywhere 
(the ``right'' and ``left'' sides of $\sigma_j$ are defined in an arbitrary way here). 

Introduce the bridge and arrow symbols associated to some $\sigma_j$ as it is shown in Figure~\ref{fig:sideofeta}. The bridge symbol 
consists of two straight stems perpendicular to $\sigma_j'$ and an arc that connects the stems. It gives some information on how the surface $\bdG$ is bypassing $\sigma_j$ at $\bdxi^\star$. The arrow, also chosen to be perpendicular to $\sigma_j'$, represents the side (right or left) of $\sigma_j'$ towards which the vector field $\bdeta$ points to.

%\tr{I really don't like this paragraph: The bridge can be of ``above'' or ``below'' type.  
%The left part of the figure corresponds to the case when the vector $\bfeta$
%points to the right of the curve, and the ``above'' type of the bridge is chosen.
%Respectively, in the right part of the figure the vector $\bfeta$ points 
%to the left, and the ``below'' type of the bridge is chosen. }

\begin{figure}[h]
  \centering{\includegraphics[width=0.6\textwidth]{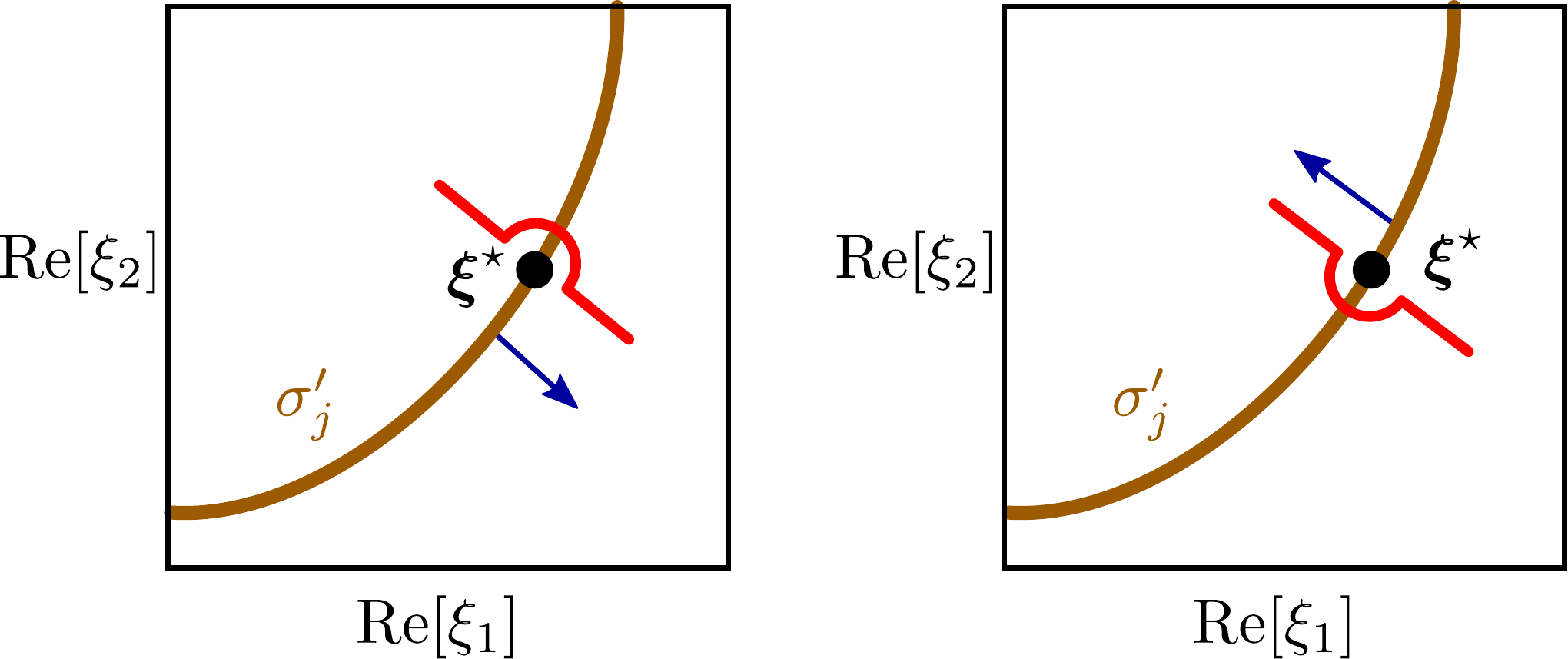}}
  \caption{Introduction of the bridge and arrow notation.}
\label{fig:sideofeta}
\end{figure}

Let us list some important properties of the bridge and arrow symbol. All these points can be proved using the mathematical framework developed in the next subsection.

\begin{itemize}
\item[(i)] The two configurations of Figure \ref{fig:sideofeta} are the only two possible bridge and arrow configurations.

\item[(ii)] The stems of the bridge symbol do not necessarily have to be normal 
to $\sigma_j'$. Indeed, the symbol can be rotated freely as long as the stems do not become tangent to $\sigma_j'$.
Still, just by looking at the bridge symbol it is possible to say whether $\bfeta$ points 
to the right or to the left of $\sigma_j'$. Hence these too symbols, the bridge and the arrow are somewhat redundant as they each convey the same information.

\item[(iii)] The bridge symbol is defined for a single point of $\sigma_j'$, but according 
to the conditions of Theorem~\ref{th:flattened} it can be carried 
along $\sigma_j'$ in a continuous manner. % \tr{, i.e.\ it always 
%remains being ``above'' or ``below'' type with respect to direction of $\sigma_j'$.}
\end{itemize}

Hence, in order to understand the position of $\bdG$ relative to $\sigma$, a bridge configuration should be introduced for each singular trace component $\sigma_j'$ of the singularity set $\sigma$. Using the points (i)--(iii), we can derive two more important properties:

\begin{itemize}

%\item[(iv)] To understand the position of $\bdG$ relative to $\sigma$, a bridge configuration should be introduced for each singular trace component $\sigma_j'$.

\item[(iv)]\label{item:(iv)} In the case of a transverse crossing between two real trace components $\sigma_1'$ and $\sigma_2'$, the bridge symbols on each component can be chosen independently. 

\item[(v)] For a quadratic touch point $\bfxi^\star$ between two real traces $\sigma_1'$ and $\sigma_2'$,
the bridge symbols cannot be selected independently. Indeed, the vector $\bfeta$ at $\bfxi^\star$ is the same for both traces, and the bridge configurations should be compatible at the touching point.
The only two possible bridge configurations for a quadratic touch  are shown in 
Figure~\ref{fig:bridge-tangential}.  

\end{itemize}

\begin{figure}[h]
  \centering{\includegraphics[width=0.6\textwidth]{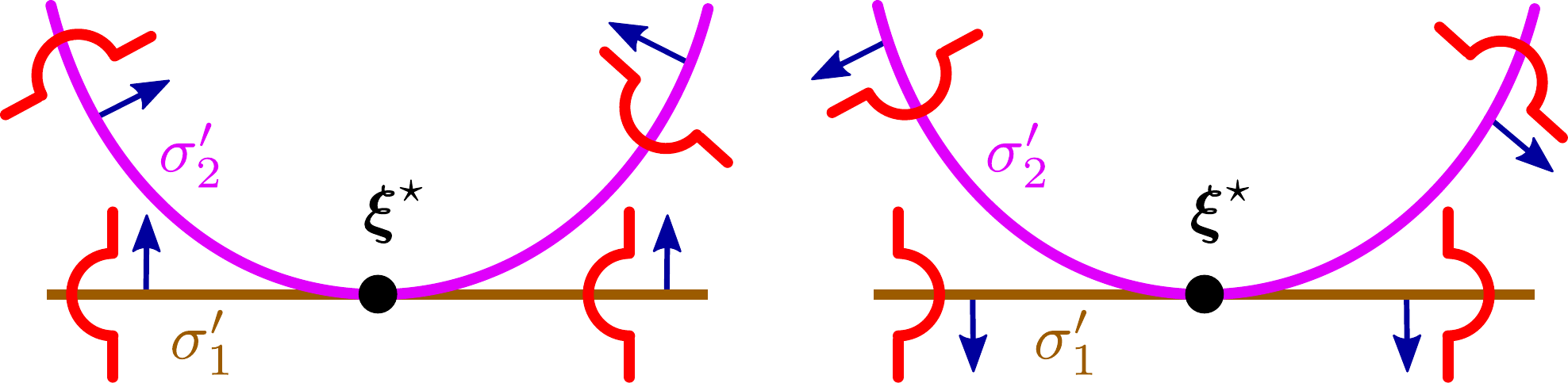}}
  \caption{The two possible bridge configurations at a tangential point.}
  \label{fig:bridge-tangential}
\end{figure}

Let us illustrate the points (i)--(iii) by a non-trivial and, as we will see with the quarter-plane problem in \cite{Part6B}, practically important example.

\begin{exa}
\label{ex:circle}
Let us consider a singularity set $\sigma$ of the form
$
\sigma =\{\bdxi\in\mathbb{C}^2,\, \xi_1^2 + \xi_2^2 - 1 = 0\}.
$ 
As mentioned in Example~\ref{ex:ex4}, its real trace $\sigma'$ is a circle. According to the point (iii), it is enough to choose the bridge and arrow symbols
at a single point of $\sigma'$. By continuity, 
this defines these symbols everywhere on $\sigma'$. 
The only two possible configurations are shown in    
Figure~\ref{fig:ex8}.
%This example has a non-trivial consequence. 
%Namely, 
%let $\bdG''$ be some integration 
%surface close to the real plane (not necessarily coinciding with 
%the real plane almost everywhere) and not crossing~$\sigma$.
%Let the function $\tmmathbf{\eta}''$ describe this surface.
%Then, necessarily there exists a point inside the circle at which 
%$\tmmathbf{\eta}'' = \tmmathbf{0}$.    
\end{exa}

\begin{figure}[h]
	\centering{\includegraphics[width=0.6\textwidth]{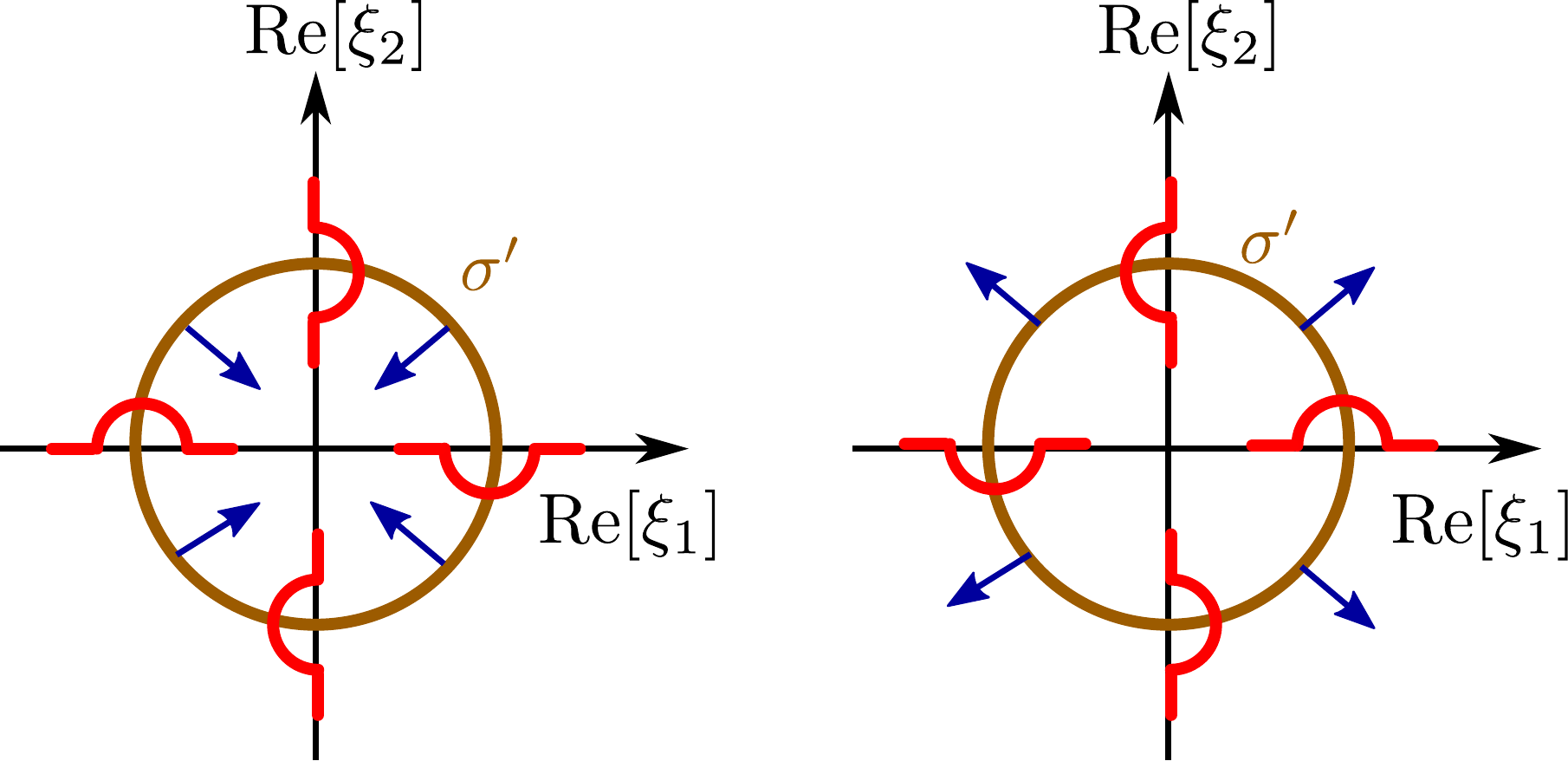}}
	\caption{The two possible bridge and arrow configurations for a circle.}
	\label{fig:ex8}
\end{figure}

To illustrate the point (iv) that the 
bridge symbols are independent at a tangential crossing, 
consider the following example. 

\begin{exa}
Consider the singularitiy set $\sigma=\sigma_1\cup\sigma_2$ where $\sigma_{1}=\{\xi_{1} = 0\}$ and $\sigma_{2}=\{\xi_{2} = 0\}$. Let $\gamma_+$ and $\gamma_-$ be 1D complex contours coinciding with the real axis everywhere 
except in a neighbourhood of the origin that they bypass from above and below respectively. 
Then the surfaces $\bdG_{\pm\pm}=\gamma_{\pm} \times \gamma_{\pm}$, where the 
signs are chosen independently, are all admissible (they do not cross the singularities and are flattable), and correspond to four different bridge configurations. 
\end{exa}

%As illustrated by the point (v) the choice of bridge configuration is more restricted for a quadratic touch.
%\todo[inline]{I decided to remove the then example 10 on quadratic touch}
\begin{comment}
The situation with quadratic touch of the singularities is slightly more 
complicated than that of a transversal crossing. Namely, the following example shows that one real trace $\sigma'$ 
may terminate at the crossing point. 

\begin{exa}

Consider the function 
\[
F(\tmmathbf{\xi}) = \sqrt{\sqrt{\xi_2} - \xi_1}
\]
This function has two irreducible singularities described by the
sets $\sigma_1: \, \xi_2 = 0$, 
$\sigma'_2 : \, \xi_2 - \xi_1^2 = 0$. 
The traces of these singularities belonging to the real plane
 are shown in Figure~\ref{fig:quadratic}. 
One can see that $\sigma_2'$ is a half of a parabola. 
\end{exa}

\begin{figure}[h]
  \centering{\includegraphics[width=0.3\textwidth]{pictures/quadratic.jpg}}
  \caption{Real traces of singularities for a quadratic touch}
  \label{fig:quadratic}
\end{figure}
\end{comment}

%%%%%%%%%%%%%%%%%%%%%%%%%%%%%%%%%%%%%%%%%%%%%%

\subsection{A mathematical framework for the bridge symbol}

The convenience of the bridge symbol and its properties listed above can be proved using a relatively simple mathematical framework. Namely, 
one can introduce a local complex transverse coordinate $z$ near 
some real point $\bfxi^\star \in \sigma_j'$,
and the bridge notation shows how a cross-section of $\bdG$ with a transversal coordinate 
line bypasses the singularity. 

Consider a point
$\tmmathbf{\xi}^{\star} = (\xi_1^{\star}, \xi_2^{\star})$ on 
the real trace $\sigma_j'$ of an irreducible component~$\sigma_j$ with defining function $g_j$. 
By definition, this point is real, i.e. $\tmmathbf{\xi}^{\star} \in
\mathbb{R}^2$, and we have
\begin{align}
  g_j (\tmmathbf{\xi}^{\star}) = 0, \hspace{1em}  \frac{\partial g_j}{\partial \xi_1}
  (\tmmathbf{\xi}^{\star}) = \tva^{\star} \in \mathbb{R}, \hspace{1em} 
  \frac{\partial g_j}{\partial \xi_2} (\tmmathbf{\xi}^{\star}) =
  \tvb^{\star} \in \mathbb{R}, \hspace{1em} \left(
  \tva^{\star} \right)^2 + \left( \tvb \right)^2
  \neq 0.  \label{eq:realproperty}
\end{align}

Consider the local change of variable $(\xi_1, \xi_2) \leftrightarrow
(z, t)$ centred at $\tmmathbf{\xi}^{\star}$, where $z$ is a
{\tmem{transverse}} variable to $\sigma_j$ at $\tmmathbf{\xi}^{\star}$ and $t$
is {\tmem{tangent}} to $\sigma_j$, described by
\begin{align}
  z (\tmmathbf{\xi}) = \beta' g_j (\xi_1, \xi_2) \hspace{1em}  \text{ and } \hspace{1em}  t
  (\tmmathbf{\xi}) = \beta'' \left( - \tvb^{\star} (\xi_1 - \xi_1^{\star})
  + \text{\tmverbatim{a}}^{\star} (\xi_2 - \xi_2^{\star}) \right) + \beta''' z,
  \label{eq:changeofvar}
\end{align}
for some constants $\beta', \beta'' \in \mathbb{R} \setminus \{ 0 \}$ and
$\beta''' \in \mathbb{R}$. Note that $z (\tmmathbf{\xi}^{\star}) = t
(\tmmathbf{\xi}^{\star}) = 0$. One interesting point about this change of
variables is that it is a change of {\tmem{complex}} variables, but, when
restricted to the real plane, it is also a proper {\tmem{real}} change of
variables $(\xi_1^r, \xi_2^r) \leftrightarrow (z^r,t^r)$.

Let us now restrict ourselves to the
real ($\xi_1^r, \xi_2^r$) plane with Cartesian unit vectors $\tmmathbf{e}_{\xi_1}$
and $\tmmathbf{e}_{\xi_2}$. Since the change of variables is real, it is
possible to talk about the real unit basis vectors $\tmmathbf{e}_z$ and
$\tmmathbf{e}_t$ associated to it, defined by
\begin{align*}
  \tmmathbf{e}_z = \hat{\tmmathbf{e}}_z / | \hat{\tmmathbf{e}}_z |, \quad
  \tmmathbf{e}_t = \hat{\tmmathbf{e}}_t / | \hat{\tmmathbf{e}}_t |, \quad
  \hat{\tmmathbf{e}}_z = \partial \tmmathbf{r}/\partial z, \quad
  \hat{\tmmathbf{e}}_t = \partial \tmmathbf{r}/\partial
  t,
\end{align*}
where here $\tmmathbf{r}$ is defined to be the real position vector
$\tmmathbf{r}= \xi_1^r \tmmathbf{e}_{\xi_1} + \xi_2^r \tmmathbf{e}_{\xi_2} $. To make the transverse/tangent property a bit clearer, let us introduce the
unit vectors $\tmmathbf{n}^{\star}$ and $\tmmathbf{t}^{\star}$ as follows:
\begin{align}
  \tmmathbf{n}^{\star} = \frac{1}{\sqrt{\left( \text{\tmverbatim{a}}^{\star}
  \right)^2 + \left( \tvb^{\star} \right)^2}} \left( \begin{array}{c}
    \text{\tmverbatim{a}}^{\star}\\
    \tvb^{\star}
  \end{array} \right) \hspace{1em} \text{ and } \hspace{1em} \tmmathbf{t}^{\star} =
  \frac{1}{\sqrt{\left( \text{\tmverbatim{a}}^{\star} \right)^2 + \left(
  \tvb^{\star} \right)^2}} \left( \begin{array}{c}
    - \tvb^{\star}\\
    \text{\tmverbatim{a}}^{\star}
  \end{array} \right),  \label{eq:ntvect}
\end{align}
so that $\tmmathbf{n}^{\star}$ is a unit normal vector to $\sigma_j'$ at
$\tmmathbf{\xi}^{\star}$ and $\tmmathbf{t}^{\star}$ is a unit tangent vector
to $\sigma_j'$. With a little bit of algebra, one can show that
\begin{align}
  \hat{\tmmathbf{e}}_z = \frac{1}{\sqrt{\left( \text{\tmverbatim{a}}^{\star}
  \right)^2 + \left( \tvb^{\star} \right)^2}} \left( \frac{1}{\beta'}
  \tmmathbf{n}^{\star} - \frac{\beta'''}{\beta''} \tmmathbf{t}^{\star} \right)
  \hspace{1em} \text{ and } \hspace{1em} \hat{\tmmathbf{e}}_t = \frac{1}{\beta'' \sqrt{\left(
  \text{\tmverbatim{a}}^{\star} \right)^2 + \left( \tvb^{\star}
  \right)^2}} \tmmathbf{t}^{\star},  \label{eq:ztvect}
\end{align}
illustrating the fact that $\hat{\tmmathbf{e}}_z$ is transverse to
$\sigma'_j$ (but not necessarily perpendicular to it) and $\hat{\tmmathbf{e}}_t$
is tangent to $\sigma_j'$ as can be seen in Figure \ref{fig:unitvectorsezet}. Changing the values of the three constants $\beta'$,
$\beta''$ and $\beta'''$ leads to rotations of $\hat{\tmmathbf{e}}_z$ and
potentially to a change of direction of $\hat{\tmmathbf{e}}_t$.

\begin{figure}[h]
  \centering{\includegraphics[width=0.3\textwidth]{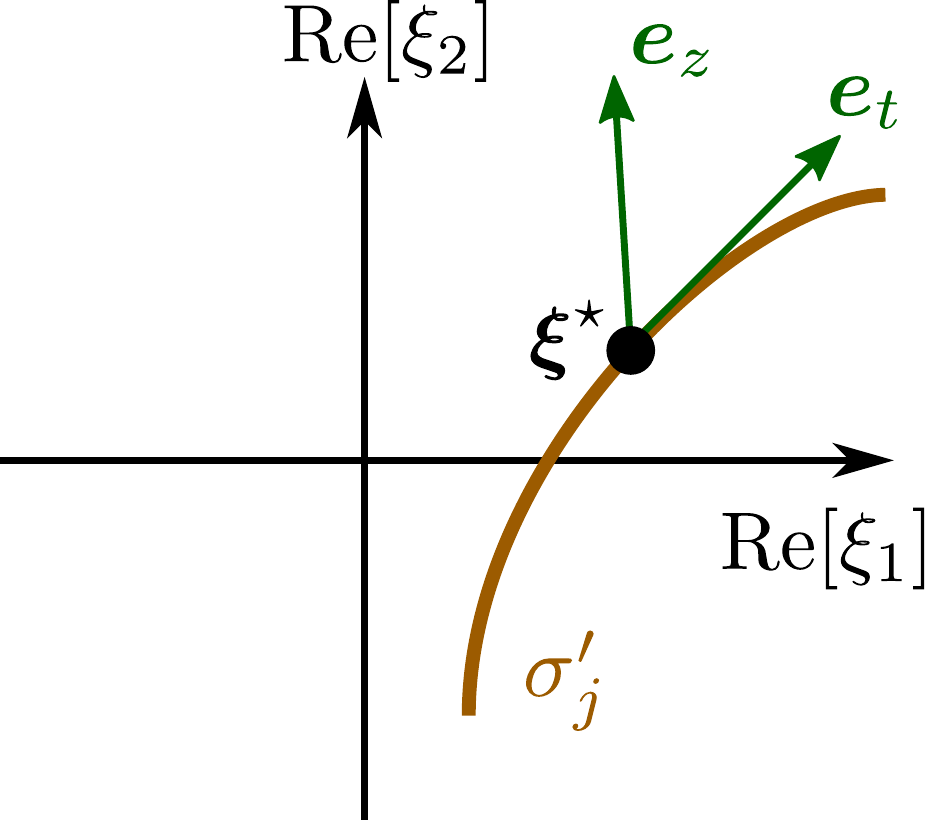}}
  \caption{Illustration of the local change of variable $(\xi_1, \xi_2)
  \leftrightarrow (z, t)$ and the related unit vectors.}
\label{fig:unitvectorsezet}
\end{figure}

%A given choice of the variables $(z, t)$ corresponds to a given set of
%constants $(\beta', \beta'', \beta''')$ and to a unit vector $\tmmathbf{e}_z$
%transverse to $\sigma'$. Our aim is to visualise what the contour
%$\tmmathbf{\bdG}$ does when it bypasses $\sigma'$ in the direction of
%$\tmmathbf{e}_z$. 

%\tb{Consider the  set  $\bdG \cap \{ \tmop{Re}
%[t] = 0 \}$ in the vicinity of $\tmmathbf{\xi}^{\star}$, 
%and draw a projection of this set onto the 
%$z$ complex plane. 
%The result is a one dimensional 
%contour\footnote{We remind that, in the general case, 
%manifolds of real dimension dimensions $m$ and $n$ embedded in a space of 
%dimension $N$ have intersection of dimension $m+n-N$.} 
%since the set $\tmop{Re} [t] = 0$ has real dimension~3, and $\bdG$
%has real dimension~2. }

	As a consequence of this change of variable, $\tmmathbf{\Gamma}$ can also be seen (at least locally) as a
	surface in the $(z, t)$ two-dimensional complex space that ``lies above'' the
	$(z^r, t^r)$ two-dimensional real plane in the sense that it can be
	parametrised by the two real parameters $(z^r, t^r)$ in the vicinity of the origin.	
	Consider now the part of $\tmmathbf{\Gamma}$ corresponding to the image of the points $(z^r, 0)$ by this parametrisation, for $z^r$ in a neighbourhood of $0$. This object,
	denoted $\tmmathbf{\Gamma} |_{t^r = 0}$, is one-dimensional. Its natural projection on the $z$-complex plane is a contour that coincides with the real axis everywhere except in a neighbourhood of $z=0$, since $\bdG$ coincides with $\mathbb{R}^2$ everywhere except the 
neighbourhood of $\sigma'_j$. 
The contour cannot pass through $z = 0$ since this point belongs to the 
singularity set~$\sigma_j$. Thus, the contour should bypass  $z = 0$ either
from above or from below. 

We will see that to each such change of variables, we can associate a bridge symbol. To obtain this symbol, draw the graph of $\tmmathbf{\Gamma} |_{t^r = 0}$ just atop the graph of $\sigma_j'$ as a small ``icon''.
Then, place the origin of the $z$-plane of the ``icon''
at the point $\tmmathbf{\xi}^{\star}$. Align the positive ${\rm Re}[z]$
direction with the vector $\tmmathbf{e}_z$. Make the positive ${\rm Im}[z]$ direction having angle 
$+ \pi /2$ with respect to ${\rm Re}[z]$ (i.e.\ the mutual orientation of the axes is fixed).
Finally, remove the axes of the $z$ complex plane to obtain a bridge symbol drawn near the point $\tmmathbf{\xi}^{\star}$.
The whole procedure of creating this bridge is illustrated in 
Figures \ref{fig:iniabove} and \ref{fig:inibelow}, considering the case of the contour passing above  and below the point 
$z = 0$ respectively. %A similar procedure 
%for the contour passing below the singularity 
%is shown in Figure \ref{fig:inibelow}.
For a given orientation of  
$\tmmathbf{e}_z$, they are therefore only two possible bridge configuration near~$\tmmathbf{\xi}^{\star}$. In the case of Figure \ref{fig:iniabove} (resp. Figure  \ref{fig:inibelow}), we say that $\bdG$ bypasses $\sigma_j$ from above (resp. below) at $\bdxi^\star$ in the direction $\bde_z$.

%\tr{{\bf RCA: don't like.} The meaning of the bridge symbol is as follows: ``If the ${\rm Re}[z]$ axis is aligned with the horizontal 
%part of the bridge, then the corresponding cross-section of $\bdG$ bypasses 
%the singularity from above (below)''.}

The bridge notation introduced in this way has all the properties listed in the previous subsection.
The proof can be obtained by studying the value  
$z^{\star} = z (\tmmathbf{\xi}_{\bdG}
(\bfxi^{\star}))$, where $\bdxi_{\bdG}$ is the parametrising complex position vector of $\bdG$ as defined in Section \ref{sec:surfaceparam}. 
Since the surface $\bdG$ with defining vector field $\bdeta$ is flattable, we can assume without loss of generalities that the components of  $\tmmathbf{\eta}^{\star}=(\eta_1^\star,\eta_2^\star)=\bdeta(\bdxi^\star)$ are small. Hence, given that $g_j$ is holomorphic, we can use a first order Taylor expansion and
(\ref{eq:realproperty}) to get
\begin{align}
  z^{\star}  \approx  i  \beta'  \left( \eta_1^{\star}
  \tva^{\star} + \eta_2^{\star} \tvb^{\star} \right) = i
   \beta' \sqrt{\left( \tva^{\star} \right)^2 +
  \left( \tvb^{\star} \right)^2} \tmmathbf{\eta}^{\star} \cdot
  \tmmathbf{n}^{\star},  \label{eq:zstarexpr}
\end{align}
where $\tmmathbf{n}^{\star}$ is the unit normal vector introduced in
(\ref{eq:ntvect}). It should now be clear that if $\bdG$
bypasses $\sigma_j'$ in the direction $\tmmathbf{e}_z$ from above, then
$\tmop{Im} [z^{\star}] > 0$, while if it is from below, then 
$\tmop{Im}[z^{\star}] < 0$.
Note also that, from (\ref{eq:zstarexpr}), we get 
\begin{align}
  \tmop{sign} (\beta' \tmop{Im} [z^{\star}])  =  \tmop{sign}
  (\tmmathbf{\eta}^{\star} \cdot \tmmathbf{n}^{\star}) .  \label{eq:invariant}
\end{align}
This equation links the definition of the bridge symbol by the transverse variable 
and the vector $\bfeta$. 
Indeed, the expression in (\ref{eq:invariant})
is an invariant, since its right-hand side (RHS) does not depend on the parameters $\beta'$, $\beta''$
$\beta'''$ or on the point on $\sigma_j'$, implying the properties (ii) and (iii). 
To prove the property (i), note that the left equation of (\ref{eq:ztvect}) implies that $\text{sign}(\beta')=\text{sign}(\bde_z \cdot \bdn^\star)$. Hence, if $\bde_z$ and $\bdeta^\star$ point toward the same side of $\sigma_j'$ (i.e~ $\text{sign}(\bde_z \cdot \bdn^\star)=\text{sign}(\bdeta^\star \cdot \bdn^\star)$), (\ref{eq:invariant}) implies that  $\text{Im}[z^\star]>0$, proving that, in this case, the bypass in the $\bde_z$ direction is from above. Therefore $\bdG$ always bypasses $\sigma_j'$ from above in the $\bdeta^\star$ direction. This is why the bridge and arrow configuration can only be one of the two types presented in Figure \ref{fig:sideofeta}.

\begin{figure}[h]
  \centering{\includegraphics[width=0.8\textwidth]{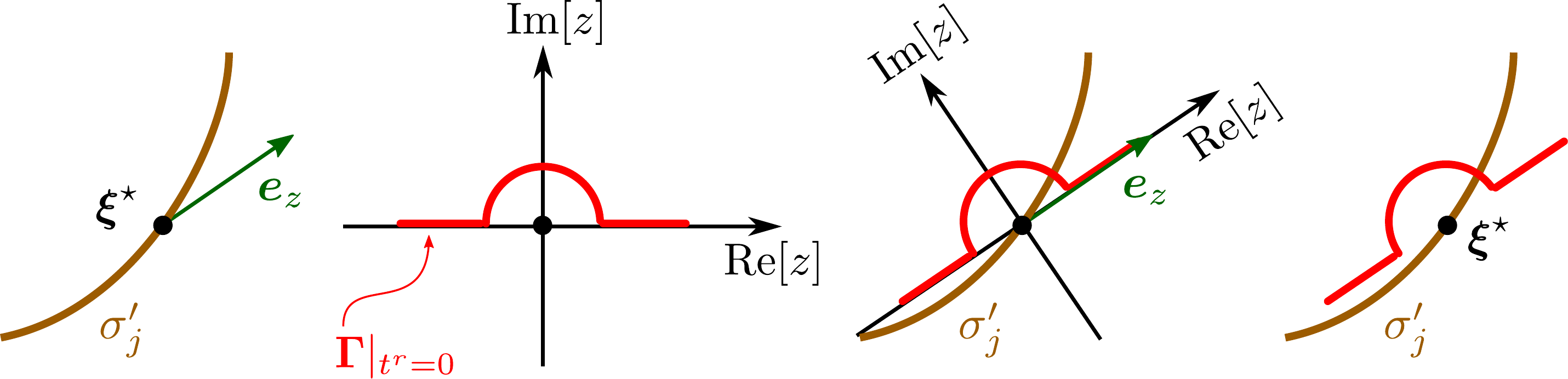}}
  \caption{The surface $\bdG$ bypasses $\sigma_j'$ from above in the direction
  $\bde_z$.}
\label{fig:iniabove}
\end{figure}

\begin{figure}[h]
  \centering{\includegraphics[width=0.8\textwidth]{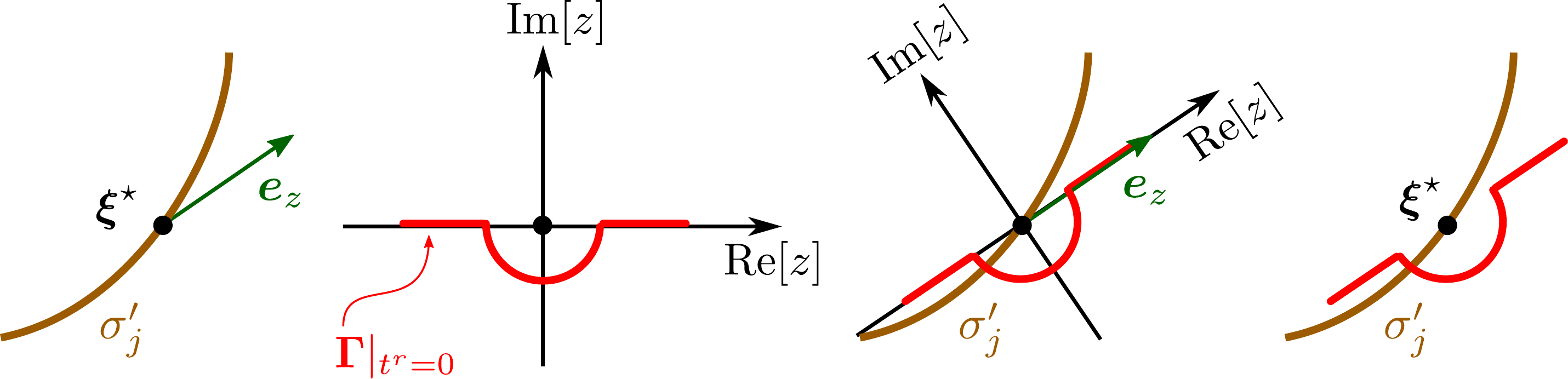}}
  \caption{The surface $\bdG$ bypasses $\sigma_j'$ from below in the direction
  $\tmmathbf{e}_z$.}
\label{fig:inibelow}
\end{figure}

A consequence of this exposition, which should hopefully be clear by now, is that the bridge (and the arrow) notation is a {\em topological invariant\/}
of a flattable surface $\bdG$. This important point can be summarised as follows.

\begin{proposition}
\label{pr:bridge_invariant}
It is impossible 
to perform a homotopy  
of a flattable surface $\bdG$ in $\mathbb{C}^2 \setminus \sigma$
(i.e.\ a continuous deformation without hitting the singularities) that leads to a change of bridge configuration for some singularity components $\sigma_j'$.
\end{proposition}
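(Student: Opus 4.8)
The plan is to reduce the claim to the elementary fact that a continuous real-valued function on $[0,1]$ that never vanishes keeps a constant sign, by re-expressing the bridge configuration through the scalar invariant (\ref{eq:invariant}). First I would fix the framework: a homotopy of a flattable surface $\bdG$ in $\mathbb{C}^2\setminus\sigma$ is a one-parameter family $\{\bdG^{(s)}\}_{s\in[0,1]}$ of flattable surfaces, each obeying the conditions of Theorem~\ref{th:flattened} (non-zero on $\sigma'$ and non-tangent to every $\sigma'_j$), depending continuously on $s$, with $\bdG^{(0)}=\bdG$; equivalently it is a continuous family of describing vector fields $\bfeta^{(s)}\colon\mathbb{R}^2\to\mathbb{R}^2$. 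Now fix an arbitrary irreducible real-trace component $\sigma'_j$ and a point $\bdxi^\star\in\sigma'_j$, and introduce a local transverse coordinate $z=\beta'g_j$ as in (\ref{eq:changeofvar}) with a fixed choice of constant $\beta'>0$; this fixes the direction $\bde_z$ of the associated bridge ``icon'' and the unit normal $\bdn^\star$ of (\ref{eq:ntvect}). By the analysis leading to (\ref{eq:zstarexpr})--(\ref{eq:invariant}), the bridge configuration carried by $\bdG^{(s)}$ at $\bdxi^\star$ in the direction $\bde_z$ is entirely encoded by the sign of
\[
\phi(s) \coloneqq \bfeta^{(s)}(\bdxi^\star)\cdot\bdn^\star,
\]
with, say, $\phi(s)>0$ corresponding to a bypass from above, since for $\beta'>0$ the invariant (\ref{eq:invariant}) gives $\tmop{sign}(\tmop{Im}[z^\star(s)])=\tmop{sign}(\phi(s))$, where $z^\star(s)=\beta'g_j\big(\bdxi^\star+i\bfeta^{(s)}(\bdxi^\star)\big)$.

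The two substantive points are then (1) that $\phi$ is continuous on $[0,1]$, which is immediate from the continuous dependence of $\bfeta^{(s)}$ on $s$ (the point $\bdxi^\star$ and the vector $\bdn^\star$ being held fixed), and (2) that $\phi(s)\neq 0$ for every $s\in[0,1]$. Point~(2) is exactly where the hypotheses of Theorem~\ref{th:flattened} are used: for each $s$, since $\bdG^{(s)}$ avoids $\sigma$ we have $\bfeta^{(s)}(\bdxi^\star)\neq\boldsymbol{0}$ (otherwise $\bdxi^\star\in\sigma_j\subset\sigma$), and since $\bdG^{(s)}$ is required to be non-tangent to $\sigma'_j$, the vector $\bfeta^{(s)}(\bdxi^\star)$ has a non-zero normal component, i.e.\ $\tva^\star\eta_1^{(s)}+\tvb^\star\eta_2^{(s)}\neq 0$, which is $\phi(s)\neq 0$ up to the positive factor $\sqrt{(\tva^\star)^2+(\tvb^\star)^2}$; equivalently, this is (\ref{eq:invariant}) applied to $\bdG^{(s)}$. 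Combining (1) and (2), $\phi$ is continuous and nowhere zero on $[0,1]$, hence of constant sign, so the bridge configuration at $\bdxi^\star$ agrees for $\bdG^{(0)}=\bdG$ and $\bdG^{(1)}$. Since the bridge configuration on the connected component $\sigma'_j$ is determined by its value at the single point $\bdxi^\star$ (property~(iii)) and $\sigma'_j$ was arbitrary, no such homotopy can change the bridge configuration on any component of $\sigma'$.

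I expect the main obstacle to be not the sign argument but pinning down the first step cleanly, namely specifying which class of homotopies is admitted, for two reasons. First, an intermediate surface must itself carry a well-defined bridge symbol, which is why we must stay within the flattable class: a general surface in $\mathbb{C}^2\setminus\sigma$ homotopic to $\bdG$ need not even be a graph over $\mathbb{R}^2$ near $\sigma'$, and, crucially, without the flattability-type requirement that the surface coincide with $\mathbb{R}^2$ away from a neighbourhood of $\sigma'$, the invariance is simply false, since one can ``unwind'' the indentation through the two extra real dimensions. Second, the hypotheses of Theorem~\ref{th:flattened} must persist along the whole path so that $\phi(s)$ cannot slip through $0$ at an intermediate tangency. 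A more robust variant, which bypasses this last point, is to run the argument on the one-dimensional slices $\bdG^{(s)}|_{t^r=0}$ of Section~\ref{sec:bridgeandarrow}: their projections to the $z$-plane form a continuous family of arcs in $\mathbb{C}\setminus\{0\}$ that agree with $\mathbb{R}\setminus\{0\}$ outside a neighbourhood of the origin, and ``bypass from above'' versus ``bypass from below'' is precisely the value in $\pi_1(\mathbb{C}\setminus\{0\})\cong\mathbb{Z}$ of the corresponding relative homotopy class, which is locally constant in $s$. This phrasing also makes transparent the analogy with the one-dimensional indentation picture of Figure~\ref{fig:1dindentation}.
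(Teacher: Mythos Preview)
Your proposal is correct and follows precisely the route the paper intends: the paper does not give a separate formal proof of this proposition but presents it as an immediate consequence of the preceding analysis, in particular of the invariant (\ref{eq:invariant}), and your argument simply makes that inference explicit by tracking the sign of $\phi(s)=\bfeta^{(s)}(\bdxi^\star)\cdot\bdn^\star$ along the homotopy. Your care in restricting to homotopies through the flattable class satisfying the hypotheses of Theorem~\ref{th:flattened}, and your remark that without this restriction the statement can fail, are exactly the clarifications the paper leaves implicit.
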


%%%%%%%%%%%%%%%%%%%%%%%%%%%%%%%%%%%%%%%%%%%%%%

\subsection{How to choose the bridge configuration}
\label{sec:findinggoodbridges}

In the consideration above, we described an arbitrary 
flattable integration surface $\bdG$ close to the real plane: a surface satisfying the point a) of Secion \ref{sec:surfaceparam}. We now want to characterise the point b):   the surface $\bdG$ should be obtained 
by continuously deforming $\mathbb{R}^2$ in (\ref{eq:initial-integral}), 
through a physically motivated procedure to obtain (\ref{eq:FourierIntegralIndented}).   
Thus, 
the bridge symbol for each irreducible singularity component should be chosen 
in a unique way
by studying the limiting process $\varkappa \searrow 0$.

Let $g_j(\tmmathbf{\xi}; \varkappa)$ be the defining function of $\sigma_j$ and consider again a point $\tmmathbf{\xi}^\star$ on $\sigma'_j$. Assume that $\tva^\star
= \ptl g_j / \ptl \xi_1(\bdxi^\star;0) \neq 0$, 
i.e.\ assume that $\sigma'_j$ is not parallel to the $\xi_1^r$ axis 
(this does not restrict the generality, since a similar procedure can be done
with respect to the coordinate $\xi_2^r$).

The integration surface for $\varkappa >0 $ is the real plane. 
Consider the set $\{\bdxi\in\mathbb{C}^2,\,\xi_2=\xi_2^\star\}$. The intersection of this set with the integral surface, projected in the $\xi_1$ complex plane, is simply the $\xi_1$ real axis, while its intersection with $\sigma_j$, projected in the $\xi_1$ complex plane, is a point $\xi_1^\dagger(\varkappa)$ satisfying the equation 
%of the integral surface and of the singularity by 
%the set $\xi_2 = \xi_2^\star$. 
%For the integral surface, this is the real axis in the $\xi_1$-plane.
%For the singularity, this is the point $\xi_1^\dag(\varkappa)$ obeying the 
%equation 
\[
g_j(\xi_1^\dag(\varkappa) , \xi_2^\star ; \varkappa) = 0,
\] 
together with the natural condition
\[
\xi_1^\dag(\varkappa) \to \xi_1^\star
\quad
\mbox{as}
\quad
\varkappa \searrow 0. 
\] 

Omitting the terms of order $O(\varkappa^2)$, and using the chain rule, we obtain 
\[
\xi^\dag_1 (\varkappa) \approx \xi_1^\star -
\frac{\varkappa}{\tva^\star}
\frac{\ptl g_j(\tmmathbf{\xi^*} , 0)}{\ptl \varkappa}
.
\] 
According to the definition of a singularity with the real property, the partial derivative is purely imaginary. 
Thus, the point 
$\xi_1^\dag(\varkappa)$ has a non-zero imaginary part, and it approaches the real axis either from 
above or from below as $\varkappa \searrow 0$.

Note that this procedure coincides to a choice of change of variables $(z,t)$ such that $\bde_z=\bde_{\xi_1}$, which can be obtained, for instance, by choosing $\beta' = 1/\tva^\star$ and $\beta'''/\beta'' = \text{\tmverbatim{b}}^{\star}$. 

Therefore, if $\xi_1^\dag(\varkappa)$ approaches the real axis from above as $\varkappa\searrow0$, the corresponding portion of $\bdG$ in the $\xi_1$ complex plane should be deformed below the origin, as illustrated in Figure~\ref{fig:initialbridge} (left). In this case, the bridge symbol is chosen such that $\bdG$ bypasses $\sigma_j'$ from below in the $\bde_{\xi_1}$ direction, fixing the bridge configuration in a unique way on the whole of $\sigma_j'$. Respectively, if $\xi_1^\dag(\varkappa)$ approaches the real axis from below, as illustrated in Figure~\ref{fig:initialbridge} (right), the bridge is chosen such that  $\bdG$ bypasses $\sigma_j'$ from above in the $\bde_{\xi_1}$ direction. 

Therefore, in all cases, just by looking at the sign of $\text{Im}[\xi_1^\dagger(\varkappa)]$ we can determine uniquely the appropriate bridge configuration ensuring that the point b) of Section \ref{sec:surfaceparam} is satisfied.

%If it approaches the $\xi_1$ real axis from above, 
%the contour of integration $\bdG$ should be bent below, thus giving the ``down''-type bridge. 
%Respectively, if $\xi_1^\dag(\varkappa)$ approaches the real axis from below, 
%this produces an ``up''-type bridge. This procedure is shown in
%Figure~\ref{fig:initialbridge}.  
 
\begin{figure}[h]
  \centering{\includegraphics[width=0.6\textwidth]{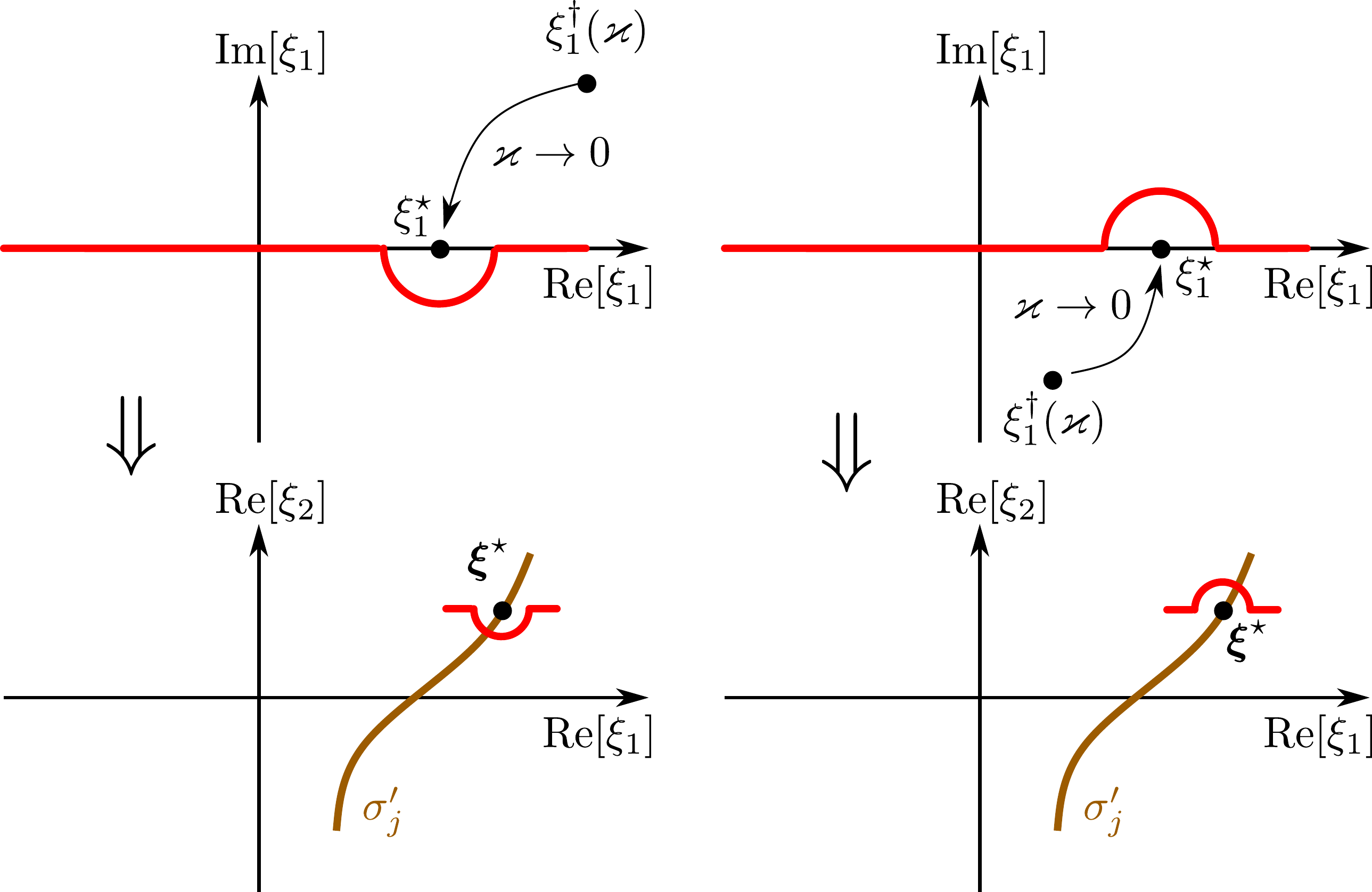}}
  \caption{How to choose a bridge configuration.}
  \label{fig:initialbridge}
\end{figure} 
 
%The sketch of the rigor reasoning standing behind this procedure is as follows. 
%In the linear approximation with respect to 
%$|\bfxi- \bfxi^\star|$
%and 
%$\varkappa$, the coordinate $\xi_1 - \xi_1^\varkappa$ coincides with the 
%transversal coordinate $z$ for the following choice of the constants: 
%\[
%\beta' = (\text{\tmverbatim{a}}^{\star})^{-1} ,
%\qquad 
%\beta''' = \text{\tmverbatim{b}}^{\star}. 
%\] 
%And we demonstrated above that the behavior of $\bdG$ in the transversal coordinate 
%determines the bridge symbol.  

Let us illustrate this procedure by the following example.

\begin{exa}
\label{ex:35}
Take a singularity close to the one studied in Example~\ref{ex:circle}:
\[
g(\bfxi ; \varkappa) = \xi_1^2 + \xi_2^2 - (1 + i \varkappa)^2 .
\] 
Consider the limit $\varkappa \searrow 0$ and choose a correct 
bridge symbol among the two ones shown in Figure~\ref{fig:ex8}.
For this,  
take the point $\bdxi^\star = (1 , 0)$ and  
fix $\xi_2 = 0$. 
In this case, we obtain $\xi_1^\dag (\varkappa) = 1 + i \varkappa$, 
and this point approaches the real 
axis of $\xi_1$ from above as $\varkappa\searrow0$. 
This corresponds to the left part of 
Figure~\ref{fig:initialbridge}, and $\bdG$ should bypass $\sigma_j'$ from below at $\bdxi^\star$ in the direction $\bde_{\xi_1}$. Therefore, the correct bridge configuration is shown in the Figure~\ref{fig:ex8} (left).
\end{exa}

%%%%%%%%%%%%%%%%%%%%%%%%%%%%%%%

\section{Active and inactive points}\label{sec:sec4}

We have now made sense of the limit (\ref{eq:limitdefofu}) by showing that $u$ can be written as the integral (\ref{eq:FourierIntegralIndented}) for some flattable surface $\bdG$ with a uniquely defined bridge configuration. The overarching aim of this article is to shed light on the asymptotic
behaviour of the physical field 
$u (\tmmathbf{x})$ as $r = | \tmmathbf{x} |
\rightarrow \infty$. In this section, our aim is to show that $\bdG$ can in turn be deformed homotopically to a surface $\bdG'$ that will enable us to make an asymptotic estimation of the integral (\ref{eq:FourierIntegralIndented}) as $r\to\infty$ for a given observation direction $\tilde{\bdx}$ defined such that $\bdx=r \tilde{\bdx}$. Note that since the deformation is homotopic in $\mathbb{C}^2 \setminus \sigma$, then, by Proposition \ref{pr:bridge_invariant}, the two surfaces $\bdG$ and $\bdG'$ have the same bridge configuration.

More specifically, in the rest of the article, we will show that for
a given observation direction $\tilde{\tmmathbf{x}}$ and a given integral
representation (\ref{eq:FourierIntegralIndented}), 
some points $\tmmathbf{\xi}
\in \mathbb{R}^2$ do lead to an asymptotic contribution to the 
far-field of $u(\tmmathbf{x})$ that is not exponentially vanishing 
as $r\to\infty$, while others do not. 
The points leading to these non-vanishing terms are referred to as {\em contributing},
while the others are said to be {\em non-contributing}.
We will see below that, generally, there exists only a discrete set of contributing points, so that the locality principle is fulfilled for the integral: 
{\em the whole  integral can be estimated by studying neighbourhoods of several
	discrete points.}

%We finished to build the indented integration surface $\bdG$. Now our aim to deform 
%$\bdG$ into a surface $\bdG'$ enabling us to make an asymptotic estimation of the 
%integral.  
%The contours $\bdG$ and $\bdG'$ used in this research 
%are homotopic in $\mathbb{C}^2 \setminus \sigma$, so they 
%have the same bridge symbols for each singularity. 
%
%We remind that $\tmmathbf{x}= r \tmmathbf{\tilde x}$, 
%$|\tilde{\tmmathbf{x}}| = 1$. 

\subsection{Definition of active and inactive points}

A sufficient condition for a point $\bdxi^\star$ to be non-contributing, is for the integrand of (\ref{eq:FourierIntegralIndented}) to be exponentially vanishing as $r\to\infty$ when $\bdxi$ belongs to a patch of the integration surface lying above a neighbourhood of $\bdxi^\star$. This idea motivates the introduction of the so-called active and inactive points, a rigorous definition of which will be given in Definition \ref{def:acitveandinactive}.

Consider a suitable flattable surface $\bdG$ with defining vector field $\bdeta$ together with a point $\bdxi^\star \in\mathbb{R}^2$ and parametrising complex position vector $\bdxi_{\bdG}$. The exponential factor of the integrand of (\ref{eq:FourierIntegralIndented}) can be estimated at this point as follows:
\begin{align}
	| e^{- i\tmmathbf{x} \cdot \tmmathbf{\xi}_{\tmmathbf{\bdG}}
		(\tmmathbf{\xi}^{\star})} |  =  | e^{- ir \tilde{\tmmathbf{x}} \cdot
		(\tmmathbf{\xi}^{\star} + i \tmmathbf{\eta}(\bfxi^\star))} | =
	e^{ r \, \tilde{\tmmathbf{x}} \cdot \tmmathbf{\eta} (\bfxi^\star)}. 
	\label{eq:exponentialgrowthatpoint}
\end{align}
Hence, the most important quantity is $\tilde{\tmmathbf{x}} \cdot
\tmmathbf{\eta} (\bfxi^\star)$. If it is strictly negative the
integrand in (\ref{eq:FourierIntegralIndented}) is exponentially vanishing as $r \to \infty$. We will hence say that the integrand is \emph{exponentially small} 
at all points $\bdxi\in\mathbb{R}^2$ where 
\[
\tmmathbf{\tilde x} \cdot \tmmathbf{\eta}(\bdxi) < 0.
\]
Since
for any compact domain with this property 
one can take $r$ large enough to make the corresponding portion of the integral 
small enough, any such point is non-contributing. This leads to the following definition.

\begin{defi}
	Consider a physical field $u$ and a \RED{function} $F$ with the real
	property related by the integral
	(\ref{eq:FourierIntegralIndented}) for some suitable flattable surface of integration $\bdG$, together with a given observation
	direction $\tilde{\tmmathbf{x}} =\tmmathbf{x}/ r$ and a point $\bdxi^\star\in\mathbb{R}^2$.
	If $\bdG$ can be locally and
	continuously deformed without crossing any singularities of $F$ 
	into a surface $\bdG'$ with defining vector field $\bdeta'$ such that $\tilde{\bdx}\cdot\bdeta'(\bdxi^\star)<0$, we say that $\bdxi^\star$ is \emph{inactive}. A point $\tmmathbf{\xi}^{\star}$ that is not inactive is said to be \emph{active}.
	\label{def:acitveandinactive}
\end{defi}

With this definition and the discussion above, it should be clear that inactive points are non-contributing. The important point here is that our definition of inactivity is local and relatively easy to use. Indeed, to prove the inactivity of a point one should simply 
give an example of a proper local deformation of the integration surface. This concept of active and inactive points has been introduced and used
effectively in {\cite{Ice2021}} in a specific case. With the present article,
we wish to apply this notion in a more general context and for a wider set of
configurations.

In Sections \ref{sec:nearactive} and \ref{sec:integrationawayfromactive}, we will show that, thanks to these local considerations, we can homotopically deform the whole of $\bdG$ into a surface $\bdG'$, on which the integrand is only not exponentially small in the neighbourhoods of some active points. But first, we will provide a list of results regarding the activity or inactivity of points in $\mathbb{R}^2$.

%%%%%%%%%%%%%%%%%%%%%%%%%%%%%%%%%%%%%%%%%%

\subsection{Non-singular points are all inactive}
Let $\bfxi^\star \in \mathbb{R}^2$ be a point that does not belong to $\sigma'$, the real trace of the singularity set of $F$, together with a suitable integration surface $\bdG$ defined by a vector field $\bdeta$. If $\tilde{\bdx}\cdot\bdeta(\bdxi^\star)<0$, then this point is inactive. Else, choose a constant vector $\bdeta'$ small enough such that the surface patch parametrised by $\bdxi^r+i\bdeta'$ for $\bdxi^r$ in a small neighbourhood of $\bdxi^\star$ is flattable (always possible since $\bdxi^\star\notin\sigma'$, see Theorem \ref{th:flattened}). Then the linear transformation passing from the initial patch of $\bdG$ to the new one does not cross the singularity either. Hence, by definition \ref{def:acitveandinactive}, $\bdxi^\star$ is also inactive in that case. This can be summarised as follows:

\begin{proposition}
	If $\bdxi^\star\notin\sigma'$, then $\bdxi^\star$ is inactive. In other words, any non-singular point is inactive.
\end{proposition}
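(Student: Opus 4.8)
The statement to prove is almost trivial given the machinery already set up, so the plan is short. The claim is: if $\bdxi^\star\notin\sigma'$, then $\bdxi^\star$ is inactive, according to Definition~\ref{def:acitveandinactive}.

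The plan is to exhibit an explicit admissible local deformation of the integration surface near $\bdxi^\star$ whose defining vector field $\bdeta'$ satisfies $\tilde{\bdx}\cdot\bdeta'(\bdxi^\star)<0$. First I would dispose of the easy case: if the current vector field $\bdeta$ already satisfies $\tilde{\bdx}\cdot\bdeta(\bdxi^\star)<0$, there is nothing to deform and the point is inactive by definition. So suppose $\tilde{\bdx}\cdot\bdeta(\bdxi^\star)\ge 0$. Since $\bdxi^\star\notin\sigma'$, by definition of the real trace it does not lie on $\sigma$ either, so there is a positive distance $d$ from $\bdxi^\star$ to $\sigma$ in $\mathbb{C}^2$. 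Choose a small closed ball $B$ around $\bdxi^\star$ in $\mathbb{R}^2$ disjoint from $\sigma'$, small enough that the whole tube $\{\bdxi^r + i\bdeta'' : \bdxi^r\in B,\ |\bdeta''|\le\delta\}$ stays within distance $d$ of $\bdxi^\star$ for some small $\delta>0$; this tube then has empty intersection with $\sigma$. Pick the constant vector field $\bdeta' = -\delta'\,\tilde{\bdx}$ for some $0<\delta'\le\delta$ (recall $\tilde{\bdx}$ is a unit real vector, so $|\bdeta'|=\delta'$); then $\tilde{\bdx}\cdot\bdeta'(\bdxi^\star) = -\delta' <0$, as required.

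It remains to check that this is a legitimate local deformation in the sense of Definition~\ref{def:acitveandinactive}: namely, that $\bdG$ can be continuously deformed, inside $\mathbb{C}^2\setminus\sigma$, from its original form to the surface $\bdG'$ agreeing with $\bdG$ outside $B$ and given by the constant field $\bdeta'$ over $B$. Concretely, one interpolates: smoothly modify $\bdeta$ over a slightly larger neighbourhood so that it equals $\bdeta'$ on $B$ while remaining unchanged outside, and note that since both the original patch of $\bdG$ over the enlarged neighbourhood and the tube described above stay within distance $d$ of $\bdxi^\star$ (after possibly shrinking the neighbourhood and using flattability of $\bdG$ to ensure $|\bdeta|$ is small there), the entire homotopy takes place inside the $\sigma$-free tube. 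Hence no singularity is crossed, and the deformed surface $\bdG'$ satisfies $\tilde{\bdx}\cdot\bdeta'(\bdxi^\star)<0$, so $\bdxi^\star$ is inactive.

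There is no real obstacle here; the only mild care needed is the bookkeeping of ``local'' deformation — ensuring the homotopy between the two vector fields stays in the region that is provably disjoint from $\sigma$ — which is handled simply by shrinking the neighbourhood of $\bdxi^\star$ and invoking Theorem~\ref{th:flattened} (or its proof) to keep all the intermediate surfaces flattable and $\sigma$-free. The conclusion is then packaged as Proposition: any non-singular point of $\mathbb{R}^2$ is inactive, hence non-contributing.
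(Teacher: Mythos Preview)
Your proposal is correct and follows essentially the same approach as the paper: dispose of the case $\tilde{\bdx}\cdot\bdeta(\bdxi^\star)<0$ trivially, otherwise pick a small constant vector field $\bdeta'$ with $\tilde{\bdx}\cdot\bdeta'<0$ over a neighbourhood of $\bdxi^\star$ that stays away from $\sigma$, and observe that the straight-line homotopy between the two patches avoids $\sigma$. Your version is a bit more explicit (you specify $\bdeta'=-\delta'\tilde{\bdx}$ and spell out the tube argument), but the idea is identical.
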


% $\bdxi^\star+i\bdeta'$ doe We can find a small enough neighbourhood of $\bdxi^\star$ not intersecting $\sigma'$, and a small enough constant vector $\bdeta'$ such that $\tilde{\bdx}\cdot\bdeta'<0$....
%
%\tr{USE THEOREM \ref{th:flattened}?}
%
%
%For a real point $\bfxi^\star \notin \sigma'$ 
%take a small enough neighborhood $\bfxi^r \notin \sigma'$
%and any  small enough constant vector $\bfeta'$ such that 
%$\tmmathbf{\tilde x} \cdot \bfeta' <0$.
%For small enough neighborhood and $\bfeta'$, 
%the local deformation of $\bdG$ into $\bdG'$ does not hit the singularity. 
%Thus, any nonsingular point is inactive. 

\subsection{Points belonging to a single singularity trace} \label{sec:pointonsinglesingu}
Having dealt with the case of non-singular points, let us now consider $\bdxi^\star$ belonging to an irreducible real trace component $\sigma_j'$ and to no other irreducible components. We also assume that a bridge configuration has been chosen to describe the behaviour of $\bdG$ near $\sigma_j'$. Let $g_j$ be the defining function of $\sigma_j$ and let $\tva^\star$ and $\tvb^\star$ be defined as in (\ref{eq:realproperty}). Our aim here is to decide whether $\bfxi^\star$ is active or not.

%Let be $\bfxi^\star \in \sigma_j'$.
%Let some bridge notation be chosen to describe behavior of $\bdG$
%near~$\sigma_j'$. Let also some $\tmmathbf{\tilde x}$
%be chosen.  

\begin{proposition}
\label{pr:non_orthogonal}
If $\tmmathbf{\tilde x}$ is not orthogonal to $\sigma_j'$
at $\bfxi^\star$, then $\bfxi^\star$ is inactive.
\end{proposition}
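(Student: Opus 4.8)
The plan is to exploit the freedom of choosing the bridge direction $\bde_z$ together with the sign of $\tilde{\bdx}\cdot\bdeta^\star$ encoded in the bridge/arrow notation. Recall from Theorem~\ref{th:flattened} and the discussion following~(\ref{eq:invariant}) that near $\bdxi^\star$ the surface $\bdG$ is determined by a vector field $\bdeta$ which (after rescaling by a positive factor) points to one definite side of $\sigma_j'$, and that the quantity $\tmop{sign}(\bdeta^\star\cdot\bdn^\star)$ is the invariant carrying the bridge information. I would argue that whenever $\tilde{\bdx}$ is not orthogonal to $\sigma_j'$ at $\bdxi^\star$ (equivalently $\tilde{\bdx}\cdot\bdt^\star\neq 0$), one can locally deform $\bdG$ to a surface $\bdG'$ whose defining vector field $\bdeta'$ at $\bdxi^\star$ satisfies $\tilde{\bdx}\cdot\bdeta'(\bdxi^\star)<0$, which by Definition~\ref{def:acitveandinactive} makes $\bdxi^\star$ inactive.

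First I would set up local coordinates: write $\bdeta'(\bdxi^\star) = \alpha\,\bdn^\star + \gamma\,\bdt^\star$ for real $\alpha,\gamma$. The constraint that $\bdG'$ does not cross $\sigma_j$ near $\bdxi^\star$, while keeping the same bridge configuration as $\bdG$ (so that the deformation is a genuine homotopy in $\mathbb{C}^2\setminus\sigma$), forces only that $\alpha$ have a prescribed sign — say $\alpha = \alpha_0 \neq 0$ with $\tmop{sign}(\alpha_0)$ fixed by the bridge — and that $\alpha$ stay small; crucially, the tangential component $\gamma$ is completely unconstrained by the singularity, since moving along $\bdt^\star$ does not bring the surface any closer to $\sigma_j$ (this is exactly the content of $\tva^\star\eta_1 + \tvb^\star\eta_2$ being the only combination that enters $g_j$ to first order, cf.~(\ref{eq:zstarexpr})). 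I would verify, using a first-order Taylor expansion of $g_j$ as in the proof of Theorem~\ref{th:flattened}, that for any fixed small $\alpha_0$ of the correct sign and \emph{any} real $\gamma$, after multiplying by a suitable small positive factor the field $\alpha_0\bdn^\star+\gamma\bdt^\star$ still defines a flattable surface not meeting $\sigma$, and that it is homotopic to the original patch (one can continuously slide $\gamma$ from its original value to the desired one and scale $\alpha$ down, never crossing $\sigma_j$).

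Then I would choose $\gamma$ to kill the contribution: since $\tilde{\bdx}\cdot\bdeta'(\bdxi^\star) = \alpha_0(\tilde{\bdx}\cdot\bdn^\star) + \gamma(\tilde{\bdx}\cdot\bdt^\star)$ and $\tilde{\bdx}\cdot\bdt^\star\neq 0$ by the non-orthogonality hypothesis, the map $\gamma\mapsto \tilde{\bdx}\cdot\bdeta'(\bdxi^\star)$ is a non-constant affine function of $\gamma$, hence surjective onto $\mathbb{R}$; in particular one can pick $\gamma$ so that $\tilde{\bdx}\cdot\bdeta'(\bdxi^\star)<0$. Finally one rescales the whole field by a small positive factor $f$ (which does not affect the sign) to restore flattability and non-intersection globally near $\bdxi^\star$. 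This produces the required $\bdG'$ and shows $\bdxi^\star$ is inactive.

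The main obstacle I anticipate is the bookkeeping around the homotopy: one must be careful that the family interpolating between $\bdG$ and $\bdG'$ genuinely avoids $\sigma$ at \emph{every} nearby real point, not just at $\bdxi^\star$ — i.e.\ one needs the flattability-type estimate uniformly on a neighbourhood, and one must keep the sign of the normal component fixed throughout so that no bridge is "flipped" mid-homotopy. This is handled exactly as in the proof of Theorem~\ref{th:flattened} (choosing the positive factor $f$ small enough, constant along normal segments to $\sigma_j'$), so the argument is routine but deserves a careful word.
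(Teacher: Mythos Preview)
Your proposal is correct and follows essentially the same approach as the paper: both arguments exploit the fact that only the sign of the normal component of $\bdeta'$ is constrained by the bridge, while the tangential component is free, so non-orthogonality of $\tilde{\bdx}$ to $\sigma_j'$ lets one choose $\bdeta'$ on the correct side with $\tilde{\bdx}\cdot\bdeta'<0$. The paper presents this geometrically (pick $\bdeta'$ pointing to the same side of $\sigma_j'$ and satisfying the inequality, then verify via the $(\mathrm{Im}[\xi_1],\mathrm{Im}[\xi_2])$ picture that the straight-line homotopy avoids $\sigma_j$), whereas you make the same idea explicit via the $(\bdn^\star,\bdt^\star)$ decomposition; the homotopy concern you flag is exactly what the paper's imaginary-plane argument handles.
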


\begin{proof}
Let $\bfeta^\star$ be the value of $\bfeta$ (describing
$\bdG$) at $\bfxi^\star$.
If $\bfeta^\star$ obeys the inequality 
$\bfeta^\star \cdot \tmmathbf{\tilde x} <0$
then no deformation is needed, and the point $\bfxi^\star$ is inactive. 
If $\bfeta^\star \cdot \tmmathbf{\tilde x} \ge 0$, then choose 
a small enough vector $\bfeta'$, such that: 
a) $\bfeta'$ points to the same size of $\sigma_j'$
as $\bfeta^\star$; b) $\bfeta' \cdot \tmmathbf{\tilde x} < 0$
(see Figure~\ref{fig:non_orthogonal}, left).
Indeed, since $\tmmathbf{\tilde x}$ is not orthogonal to $\sigma_j'$, 
then it is always possible to select such a vector $\bfeta'$. 
Consider a small neighbourhood of $\bfxi^\star$ such that $\bdG$
and $\bdG'$ can be described in it locally by constant vectors 
$\bfeta^\star$ and $\bfeta'$, respectively. Such a neighbourhood exists by Theorem \ref{th:flattened} and the discussion in its proof.

\begin{figure}[h]
  \centering{
  \includegraphics[width=0.8\textwidth]{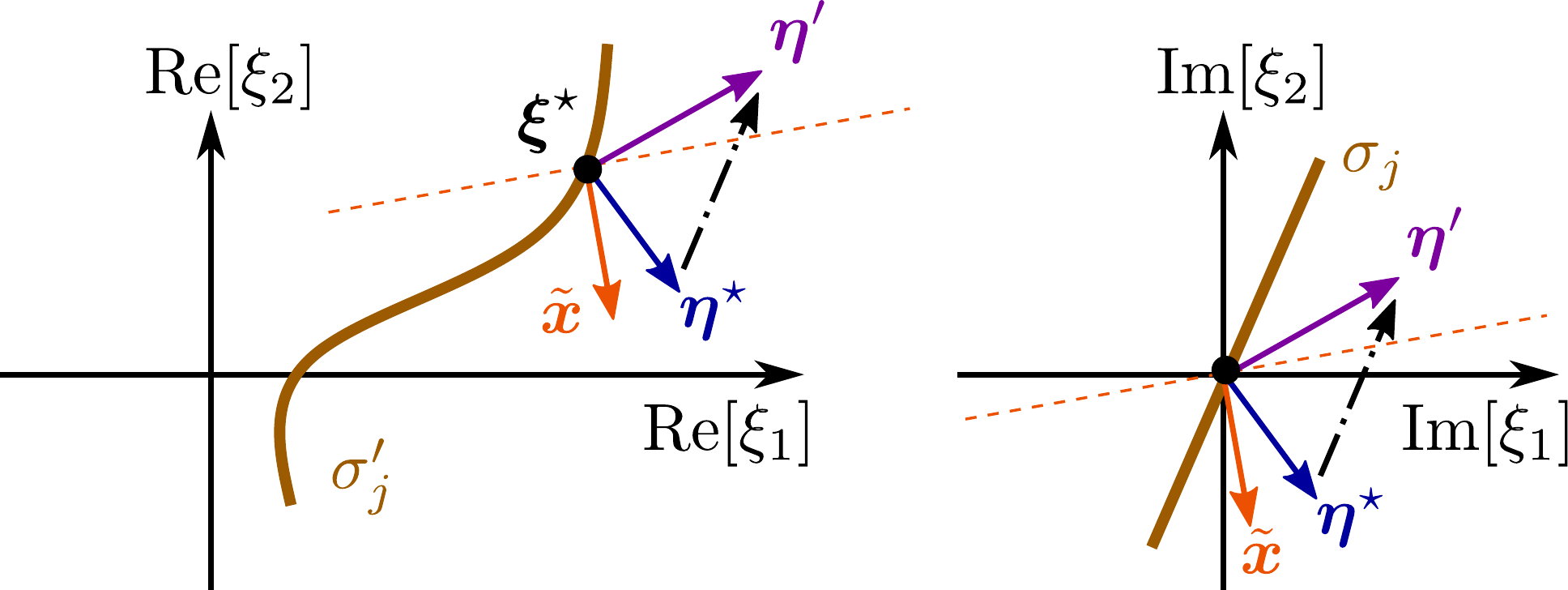}}
  \caption{Deformation of $\bdG$ into $\bdG'$ if $\tmmathbf{\tilde x}$
  is not orthogonal to $\sigma_j'$}
\label{fig:non_orthogonal}
\end{figure}

The local deformation of $\bdG$ into $\bdG'$ is described by a black dot-dash 
arrow in Figure~\ref{fig:non_orthogonal}. We mean that 
on each stage of the deformation the intermediate surface is described 
locally by a constant vector $\bfeta$, whose end 
slides along the dot-dash arrow connecting 
$\bfeta^\star$ and $\bfeta'$. 
Let us show that the surface of integration does not hit the 
singularity in the course of this deformation. 

Let $\mathcal{U}$ be some small complex neighbourhood of $\bfxi^\star$.
Consider the points of $\sigma_j \cap \mathcal{U}$ 
and plot them in the $({\rm Im}[\xi_1] , {\rm Im}[\xi_2])$ real plane
(see Figure~\ref{fig:non_orthogonal}, right).
These points are grouped near a line defined by 
\[
\tva^\star {\rm Im}[\xi_1]+\tvb^\star{\rm Im}[\xi_2]=0 .
\]
%\[
%\frac{{\rm Im}[\xi_1]}{{\rm Im}[\xi_2]} = 
%- \frac{\tvb^\star }{\tva^\star}.
%\]
They may not belong exactly to this line, since, generally, one has 
to take into account quadratic and higher order terms in the Taylor series for $g_j$ in $\mathcal{U}$. 
Note that the graph of $\sigma_j'$ in the left part of Figure~\ref{fig:non_orthogonal} is locally approximated by
\[
\tva^\star(\xi_1^r - \xi_1^\star)+\tvb^\star(\xi_2^r - \xi_2^\star)=0,
\] 
%\[
%\frac{\xi_1^r - \xi_1^\star}{\xi_2^r - \xi_2^\star} 
%\approx - \frac{\ptl_{\xi_2} g_j(\bfxi^\star) }{\ptl_{\xi_1} g_j(\bfxi^\star)},
%\]
i.e.\ the slope of the tangent line is the same as in the right part of the figure. 

In the $({\rm Im}[\xi_1] , {\rm Im}[\xi_2])$ plane, the surface $\bdG \cap \mathcal{U}$
is shown by a single point, which is the end of the vector $\bfeta^\star$.
Similarly, $\bdG' \cap \mathcal{U}$ is the point corresponding to the vector $\bfeta'$. All intermediate surface 
emerging during the deformation 
$\bdG \to \bdG'$ are shown by the dot-dash arrow in Figure \ref{fig:non_orthogonal} (right). Since this arrow is not crossing the set $\sigma_j$, the singularity cannot be hit during this deformation. 

The simple reasoning outlined above is in fact the key idea of the paper. 
On the one hand, if the vectors $\bfeta^\star$ and $\bfeta'$ point to the same side 
of the $\sigma_j$-line in the right side of Figure~\ref{fig:non_orthogonal} 
(or, what is the same, to the same side of $\sigma_j'$ in  the left side of the figure), the deformation 
$\bdG \to \bdG'$ is locally ``safe'', i.e.\ the singularity is not hit. 
On the other hand, the fact that these vectors point to the same size of $\sigma_j'$
means that the bridge symbols of $\bdG$ and $\bdG'$ with respect to $\sigma_j'$
are the same. Hence the new surface $\bdG'$ is also admissible from a physical point of view, as per the point b) of Section~\ref{sec:surfaceparam}. 

We can make a conjecture that the inverse statement is also valid: if the bridge symbol 
is changed, the deformation cannot be admissible. We believe that this is true, but since this 
statement is not needed below, we omit the proof. 
%{\link{\qedsymbol}}
\end{proof}

We will now conclude this section with the two situations that have not yet been considered.

\begin{proposition}
\label{pr:orthogonal_1}
Let $\tmmathbf{\tilde x}$  be orthogonal 
to $\sigma_j'$ at a point $\bfxi^\star$. If the bridge configuration for $\sigma_j'$ is chosen in such a way 
that $\bfeta^\star \cdot \tmmathbf{\tilde x} < 0$, where $\bdeta^\star=\bdeta(\bdxi^\star)$, then $\bfxi^\star$ is inactive.   
\end{proposition}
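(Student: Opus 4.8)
The plan is to observe that, under the stated hypothesis, no deformation of $\bdG$ is actually needed: the integrand of (\ref{eq:FourierIntegralIndented}) is already exponentially small at $\bdxi^\star$ on $\bdG$ itself, so the inactivity of $\bdxi^\star$ follows at once from Definition~\ref{def:acitveandinactive}.

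First I would translate the orthogonality hypothesis. Since $\tilde{\bdx}$ is a unit vector orthogonal to $\sigma_j'$ at $\bdxi^\star$, it must coincide with one of the two unit normals, i.e.\ $\tilde{\bdx}=\pm\bdn^\star$ with $\bdn^\star$ as in (\ref{eq:ntvect}). Consequently $\tilde{\bdx}\cdot\bdeta^\star$ and $\bdn^\star\cdot\bdeta^\star$ differ only by an overall sign, so the quantity whose sign is being constrained is precisely the one recorded by the bridge symbol (cf.\ the invariant (\ref{eq:invariant}) and the discussion in Section~\ref{sec:introbridgesymbol}). In particular the assumption that the bridge configuration is such that $\bdeta^\star\cdot\tilde{\bdx}<0$ is a meaningful statement about that symbol, not a vacuous one; it simply says that $\bdeta^\star$ points to the side of $\sigma_j'$ opposite to $\tilde{\bdx}$.

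Then I would apply (\ref{eq:exponentialgrowthatpoint}) directly: on $\bdG$, the modulus of the exponential factor of the integrand of (\ref{eq:FourierIntegralIndented}) at the point lying above $\bdxi^\star$ equals $e^{r\,\tilde{\bdx}\cdot\bdeta(\bdxi^\star)}=e^{r\,\tilde{\bdx}\cdot\bdeta^\star}$, which tends to $0$ as $r\to\infty$ by hypothesis. Hence $\bdxi^\star$ is non-contributing, and, taking for $\bdG'$ the surface $\bdG$ itself — the trivial (identity) deformation, which certainly does not cross $\sigma$ — so that $\bdeta'(\bdxi^\star)=\bdeta^\star$, Definition~\ref{def:acitveandinactive} immediately yields that $\bdxi^\star$ is inactive.

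I do not expect any genuine obstacle: this is the benign half of the orthogonal case and its proof is essentially a one-line unwinding of the definitions. The only point worth a sentence is the admissibility of choosing $\bdG'=\bdG$ in the definition of inactivity, which is clear since the empty homotopy is an allowed deformation; alternatively one may perturb $\bdG$ slightly to a nearby flattable surface on which $\tilde{\bdx}\cdot\bdeta'(\bdxi^\star)$ remains strictly negative, the set of such vector fields being open. The genuinely delicate situation, namely $\bdeta^\star\cdot\tilde{\bdx}>0$, is the one that requires real work and is treated separately.
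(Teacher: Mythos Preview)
Your proposal is correct and matches the paper's approach exactly: the paper simply states that the proposition ``is indeed trivial (no deformation is needed)'' and refers to the figure. Your write-up is more detailed than the paper's one-line remark, but the content---taking $\bdG'=\bdG$ so that $\bdeta'(\bdxi^\star)=\bdeta^\star$ already satisfies $\tilde{\bdx}\cdot\bdeta'(\bdxi^\star)<0$---is the same argument.
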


This proposition is indeed trivial (no deformation is needed). The vectors 
$\tmmathbf{\tilde x}$, $\bfeta^\star$, and the bridge notation are shown in 
Figure~\ref{fig:orthogonal_1} (left).

\begin{proposition}
\label{pr:orthogonal_2}
Let $\tmmathbf{\tilde x}$  be orthogonal 
to $\sigma_j'$ at a point $\bfxi^\star$. If the bridge configuration for $\sigma_j'$ is chosen in such a way 
that $\bfeta^\star \cdot \tmmathbf{\tilde x} > 0$, where $\bdeta^\star=\bdeta(\bdxi^\star)$, then $\bfxi^\star$ is active.   
\end{proposition}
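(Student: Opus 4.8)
The plan is to show that no admissible local deformation of $\bdG$ can make the normal component of the defining vector field negative at $\bdxi^\star$, and that this forces the integrand to fail to be exponentially small there, no matter how we deform. Since $\tilde{\bdx}$ is orthogonal to $\sigma_j'$ at $\bdxi^\star$, we have $\tilde{\bdx} = \pm|\tilde{\bdx}|\,\bdn^\star$ with $\bdn^\star$ the unit normal from (\ref{eq:ntvect}), and the hypothesis $\bdeta^\star\cdot\tilde{\bdx}>0$ means $\bdeta^\star$ points to the same side of $\sigma_j'$ as $\tilde{\bdx}$. The key observation, already isolated as ``the key idea of the paper'' in the proof of Proposition~\ref{pr:non_orthogonal}, is that any admissible local deformation $\bdG\to\bdG'$ that does not cross $\sigma_j$ must keep the end of the describing vector on the same side of the $\sigma_j$-line in the $(\tmop{Im}[\xi_1],\tmop{Im}[\xi_2])$ plane; equivalently, by Proposition~\ref{pr:bridge_invariant}, the bridge configuration with respect to $\sigma_j'$ is a topological invariant, so $\tmop{sign}(\bdeta'(\bdxi^\star)\cdot\bdn^\star)=\tmop{sign}(\bdeta^\star\cdot\bdn^\star)$ for any flattable $\bdG'$ locally homotopic to $\bdG$.

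First I would fix the sign: without loss of generality write $\tilde{\bdx} = |\tilde{\bdx}|\,\bdn^\star$ (if instead $\tilde{\bdx}=-|\tilde{\bdx}|\bdn^\star$, replace $\bdn^\star$ by $-\bdn^\star$ throughout, which just swaps the arbitrary labelling of the two sides of $\sigma_j'$). Then $\bdeta^\star\cdot\tilde{\bdx}>0$ becomes $\bdeta^\star\cdot\bdn^\star>0$. Next I would invoke the invariance of the bridge symbol: suppose for contradiction that $\bdxi^\star$ were inactive, so there is a flattable $\bdG'$ with defining field $\bdeta'$, obtained from $\bdG$ by a local continuous deformation in $\mathbb{C}^2\setminus\sigma$, with $\bdeta'(\bdxi^\star)\cdot\tilde{\bdx}<0$, i.e.\ $\bdeta'(\bdxi^\star)\cdot\bdn^\star<0$. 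But by Proposition~\ref{pr:bridge_invariant} (or directly by the invariant (\ref{eq:invariant}), whose right-hand side $\tmop{sign}(\bdeta^\star\cdot\bdn^\star)$ is unchanged under homotopy), the sign of $\bdeta(\bdxi^\star)\cdot\bdn^\star$ cannot change under such a deformation, contradicting $\bdeta^\star\cdot\bdn^\star>0$. Hence no such $\bdG'$ exists and $\bdxi^\star$ is active.

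A subtlety I should address is that Definition~\ref{def:acitveandinactive} only requires $\bdeta'(\bdxi^\star)\cdot\tilde{\bdx}<0$ at the single point $\bdxi^\star$, not strict non-tangency of $\bdeta'$ on all of $\sigma_j'$; but any admissible $\bdG'$ in our class is assumed to satisfy the conditions of Theorem~\ref{th:flattened}, so $\bdeta'$ is non-zero and non-tangent to $\sigma_j'$ near $\bdxi^\star$, and the continuous deformation stays within the flattable class, which is exactly the setting in which Proposition~\ref{pr:bridge_invariant} applies. I would also note that one could argue more bare-handedly, mirroring the $(\tmop{Im}[\xi_1],\tmop{Im}[\xi_2])$-plane picture of Proposition~\ref{pr:non_orthogonal}: the points of $\sigma_j\cap\mathcal{U}$ cluster along the line $\tva^\star\tmop{Im}[\xi_1]+\tvb^\star\tmop{Im}[\xi_2]=0$, the vector $\bdeta^\star$ lands strictly on the positive-$\bdn^\star$ side of it, and a continuous path of (ends of constant) vectors from $\bdeta^\star$ to any $\bdeta'$ with $\bdeta'\cdot\bdn^\star<0$ would have to cross that line, hence hit $\sigma_j$ — contradicting admissibility.

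The main obstacle is not the core argument, which is a one-line appeal to bridge invariance, but making precise that ``locally deform without crossing $\sigma$'' genuinely keeps us inside the flattable class so that Proposition~\ref{pr:bridge_invariant} is applicable, and handling the orientation bookkeeping between $\tilde{\bdx}$, $\bdn^\star$, and the arbitrary ``left/right'' labelling cleanly. Finally, for completeness I would record that this point, being active, is a genuine candidate contributing point — it is precisely the configuration producing a non-exponentially-small contribution (a saddle on the singularity or a transverse-crossing-type contribution), which is taken up in Section~\ref{sec:nearactive} — though strictly speaking that remark is not needed to close the proof of the proposition.
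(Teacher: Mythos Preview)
Your proof is correct and follows essentially the same approach as the paper: both argue that, since $\tilde{\bdx}$ is parallel to $\bdn^\star$, the hypothesis $\bdeta^\star\cdot\tilde{\bdx}>0$ fixes the side of $\sigma_j'$ to which $\bdeta$ points, and then Proposition~\ref{pr:bridge_invariant} (bridge invariance) forces any admissible $\bdeta'$ to stay on that same side, so $\bdeta'\cdot\tilde{\bdx}<0$ is impossible. Your version is a bit more explicit about the orientation bookkeeping and the flattability hypothesis needed to invoke the invariant, but the core idea is identical.
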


\begin{proof}
Consider the restriction of the function $\bfeta'$
(describing $\bdG'$) onto the set $\sigma_j'$. For the deformation to be valid, the bridge configuration of $\bdG$ and $\bdG'$ should be the same (by Proposition \ref{pr:bridge_invariant}). Hence, $\bdeta$ and $\bdeta'$ should point to the same side of $\sigma'_j$ as $\bfeta^\star$. Hence, since $\tilde{\bdx}$ is orthogonal to $\sigma_j'$ at $\bdxi^\star$, $\bdeta\cdot\tilde{\bdx}$ and $\bdeta'\cdot\tilde{\bdx}$ should have the same sign. This implies that in this case we cannot have a deformation leading to $\bdeta'\cdot\tilde{\bdx}<0$, so $\bdxi^\star$ is active. An illustration 
can be found in Figure~\ref{fig:orthogonal_1} (right). One can see that 
one could in principle define a valid vector $\bfeta'$ obeying the necessary properties 
everywhere on $\sigma_j'$ except on $\bfxi^\star$.	
%{\link{\qedsymbol}}
\end{proof}

Note that here we do not conclude regarding the contributing nature of this active point. This will be done later when we build an asymptotics approximation of the integral in a neighbourhood of such point. We will see then that such a point is indeed contributing, i.e.\ the resulting asymptotic component is not exponentially small.

%The proof of this statement follows from the further consideration. Namely, below we 
%build an asymptotics of the integral in a neighborhood of such a point, and it is not exponentially small. 
%Here, instead, we demonstrate that there exist no local deformation $\bdG \to \bdG'$ leaving 
%the bridge symbol unchanged. 
%Consider the restriction of the function $\bfeta'$
%(describing $\bdG'$) onto the set $\sigma_j'$. 
%Vector $\bfeta'$ should point to the same side of $\sigma'_j$ as $\bfeta^\star$. 
%Obviously, this is not possible at the point $\bfxi^\star$. An illustration 
%can be found in Figure~\ref{fig:orthogonal_2}. One can see that 
%one can define the vector $\bfeta'$ obeying the necessary properties 
%everywhere on $\sigma_j'$ except $\bfxi^\star$.
   
   \begin{figure}[h]
   	\centering{
   		\includegraphics[width=0.5\textwidth]{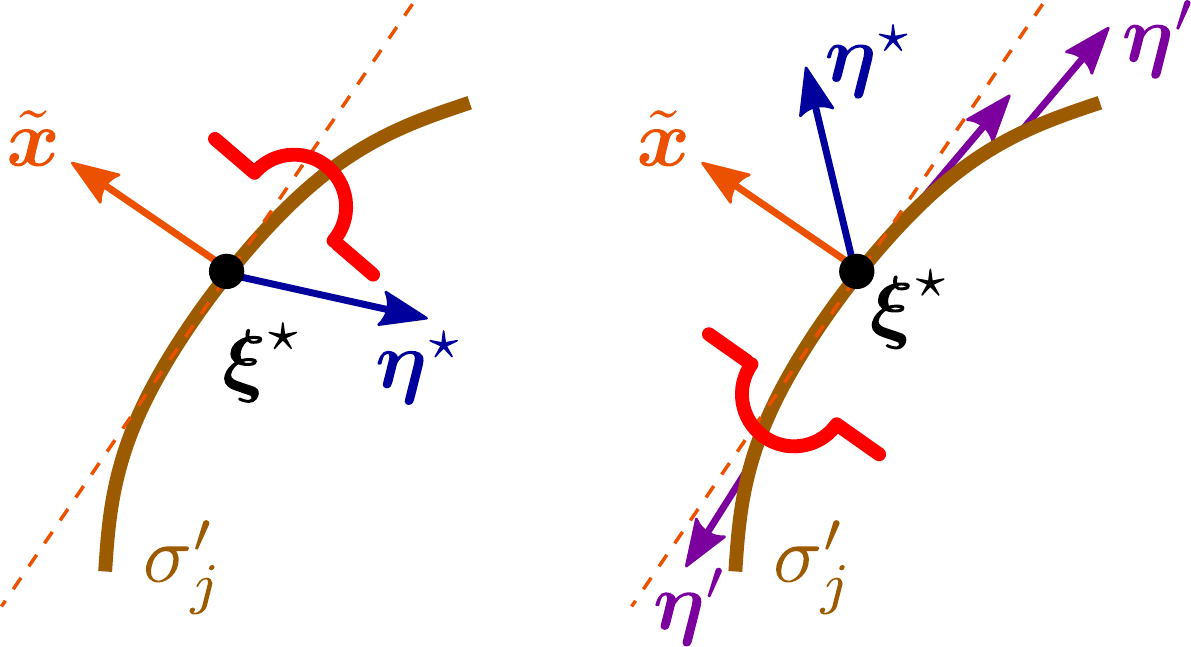}}
   	\caption{The vectors $\tmmathbf{\tilde x}$, $\bfeta^\star$, and the bridge notation for 
   		Propositions~\ref{pr:orthogonal_1} (left) and \ref{pr:orthogonal_2} (right)}
   	\label{fig:orthogonal_1}
   \end{figure}

%\begin{figure}[h]
%\centering{
%\includegraphics[height=0.25\textwidth]{pictures/orthogonal_2.jpg}}
%\caption{The vectors 
%$\tmmathbf{\tilde x}$, $\bfeta^\star$, and the bridge notation for 
%Proposition~\ref{pr:orthogonal_2}}
%\label{fig:orthogonal_2}
%\end{figure} 
Before finishing this section, we provide an alternative characterisation of what it means for $\tmmathbf{\tilde x}$ to  be orthogonal 
to $\sigma_j'$ at a point $\bfxi^\star$, as in the case in Propositions \ref{pr:orthogonal_1} and \ref{pr:orthogonal_2}.

\begin{defi}[\RED{Saddle on a singularity, SOS}]
	\label{def:saddle}
	%Let us consider an irreducible singularity $\sigma$ and a
	%point $\tmmathbf{\xi}^{\dagger} = (\xi_1^{\dagger}, \xi_2^{\dagger})$ on it.
	%Let us also consider a holomorphic function $H (\tmmathbf{\xi})$.
	Since $\sigma_j$ is a \RED{complex} manifold, it can, at least
	locally, be parametrised using only one complex variable $\psi$ say.
	That is, in the neighbourhood of a point $\bdxi^\star\in\sigma_j$ say, we have $\tmmathbf{\xi}= (\xi_1, \xi_2) \in \sigma$
	implies that $\bdxi=\bdxi^{\sigma_j}(\psi)=(\xi_1^{\sigma_j} (\psi),\xi_2^{\sigma_j}(\psi))$, where $\xi_{1, 2}^{\sigma_j}$ are holomorphic functions of $\psi$.
	Let us denote by $\psi^{\star}$ the value of $\psi$ such that $\bdxi^{\sigma_j} (\psi^{\star}) = \bdxi^\star$, and take a vector $\bdx=r \tilde{\bdx}\in\mathbb{R}^2$. Consider now the function $h (\psi) \overset{\tmop{def}}{=} e^{-i\bdx\cdot\bdxi^{\sigma_j}(\psi)}$ and its derivative $h'(\psi)$. We say that $\tmmathbf{\xi}^{\star}$ is a {\em saddle on \RED{the singularity} $\sigma_j$ with respect to $\tilde{\bdx}$} if $h' (\psi^{\star}) = 0$. \RED{For brevity, and to avoid potential confusion with usual multidimensional saddle points, we will say that $\bdxi^\star$ is a SOS on $\sigma_j$ with respect to $\tilde{\bdx}$.}
\end{defi}

We can now formulate and prove the sought-after characterisation.

\begin{proposition}
	\label{prop:perpsaddle}$\tilde{\tmmathbf{x}}$ is orthogonal to $\sigma_j'$ at
	$\tmmathbf{\xi}^{\star}$ if and only if $\tmmathbf{\xi}^{\star}$ is a \RED{SOS} on $\sigma_j$ with respect to $\tilde{\bdx}$.
\end{proposition}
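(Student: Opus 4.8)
The plan is to translate the geometric orthogonality condition into an analytic condition on the derivative $h'(\psi^\star)$ via the chain rule, and to recognise the resulting quantity as (a multiple of) the tangent vector to $\sigma_j'$ paired with $\tilde{\bdx}$. First I would compute $h'(\psi)$ explicitly: since $h(\psi) = e^{-i\bdx\cdot\bdxi^{\sigma_j}(\psi)}$, we get
\[
h'(\psi) = -i\,\bdx\cdot\frac{\mathd\bdxi^{\sigma_j}}{\mathd\psi}(\psi)\; e^{-i\bdx\cdot\bdxi^{\sigma_j}(\psi)},
\]
so, since the exponential never vanishes, $h'(\psi^\star)=0$ if and only if $\bdx\cdot\boldsymbol{\tau}(\psi^\star)=0$, where $\boldsymbol{\tau}(\psi) = \mathd\bdxi^{\sigma_j}/\mathd\psi$ is the (complex) tangent vector to $\sigma_j$ at $\bdxi^{\sigma_j}(\psi)$.

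Next I would use the real property to identify the real and imaginary parts of $\boldsymbol{\tau}(\psi^\star)$ with geometric data of $\sigma_j'$. Because $\bdxi^\star$ is real and $g_j$ has the real property, I can choose a convenient local parametrisation. Differentiating the defining relation $g_j(\bdxi^{\sigma_j}(\psi))=0$ gives $\tva^\star\,\tau_1(\psi^\star) + \tvb^\star\,\tau_2(\psi^\star)=0$, i.e.\ $\boldsymbol{\tau}(\psi^\star)$ is orthogonal (in the bilinear, non-Hermitian sense) to $\bdn^\star = (\tva^\star,\tvb^\star)/\sqrt{(\tva^\star)^2+(\tvb^\star)^2}$. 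Hence $\boldsymbol{\tau}(\psi^\star)$ is a complex multiple of $\bdt^\star$, the unit tangent to $\sigma_j'$ defined in \eqref{eq:ntvect}: writing $\boldsymbol{\tau}(\psi^\star) = (\alpha + i\beta)\,\bdt^\star$ with $\alpha,\beta\in\mathbb{R}$, and noting that $\boldsymbol{\tau}(\psi^\star)\neq\boldsymbol{0}$ since $\sigma_j$ is a manifold (so at least one of $\alpha,\beta$ is nonzero). Then, since $\bdx = r\tilde{\bdx}$ is real,
\[
\bdx\cdot\boldsymbol{\tau}(\psi^\star) = r(\alpha+i\beta)\,(\tilde{\bdx}\cdot\bdt^\star),
\]
so $h'(\psi^\star)=0$ is equivalent to $\tilde{\bdx}\cdot\bdt^\star=0$, which is precisely the statement that $\tilde{\bdx}$ is orthogonal to $\sigma_j'$ at $\bdxi^\star$ (its only tangent direction there being $\bdt^\star$). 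This chain of equivalences gives the claim in both directions.

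One subtlety worth a sentence in the write-up is the reparametrisation invariance: the SOS condition must not depend on the choice of local holomorphic coordinate $\psi$ on $\sigma_j$. This is immediate because under $\psi = \psi(\tilde\psi)$ the tangent vector scales by the nonzero factor $\mathd\psi/\mathd\tilde\psi$, so the vanishing of $h'$ is unaffected; I would mention this to justify that Definition~\ref{def:saddle} is well posed. The main obstacle — really the only non-routine point — is making the identification $\boldsymbol{\tau}(\psi^\star) = (\alpha+i\beta)\bdt^\star$ cleanly: one has to be careful that the relevant pairing is the symmetric bilinear form $\bdx\cdot\bdxi = x_1\xi_1 + x_2\xi_2$ extended to complex arguments (not the Hermitian inner product), so that the real scalar $r$ factors out of $\bdx\cdot\boldsymbol{\tau}$ and one genuinely recovers $\tilde{\bdx}\cdot\bdt^\star$ rather than its modulus. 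Once that is pinned down, the proof is a short computation.
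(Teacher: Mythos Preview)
Your proof is correct and follows essentially the same route as the paper: compute $h'(\psi^\star)$, differentiate $g_j(\bdxi^{\sigma_j}(\psi))=0$ to see that the tangent vector $\boldsymbol{\tau}(\psi^\star)$ is a nonzero (complex) multiple of $\bdt^\star$, and conclude that $h'(\psi^\star)=0$ iff $\tilde{\bdx}\cdot\bdt^\star=0$. Your explicit remarks on reparametrisation invariance and on the bilinear (as opposed to Hermitian) pairing are welcome clarifications that the paper leaves implicit, but the underlying argument is the same.
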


\begin{proof}
	Let $g_j$ be the defining function of $\sigma_j$ and $\tva^\star$ and $\tvb^\star$ be defined as in (\ref{eq:realproperty}). Since $g_j$ is
	holomorphic (i.e.\ $\bar{\partial} g_j = 0$), we have $\mathd g_j (\tmmathbf{\xi}^{\star})=\text{\tmverbatim{a}}^{\star}
	\mathd \xi_1 + \tvb^{\star} \mathd \xi_2$.
%	\begin{align*}
%		\mathd g (\tmmathbf{\xi}^{\star})  =  \partial g
%		(\tmmathbf{\xi}^{\star}) = \frac{\partial g}{\partial \xi_1}
%		(\tmmathbf{\xi}^{\star}) \mathd \xi_1 + \frac{\partial g}{\partial \xi_2}
%		(\tmmathbf{\xi}^{\star}) \mathd \xi_2 = \text{\tmverbatim{a}}^{\star}
%		\mathd \xi_1 + \text{b}^{\star} \mathd \xi_2 .
%	\end{align*}
	Now, if we assume that $(\xi_1, \xi_2)$ is restricted to lie on $\sigma_j$,
	that is $\xi_{1, 2} = \xi_{1, 2}^{\sigma_j} (\psi)$ for some analytic
	functions $\xi_{1, 2}^{\sigma_j}$ of a complex variable $\psi$ and let
	$\psi^{\star}$ be the value of $\psi$ such that $\xi_{1, 2}^{\sigma_j}
	(\psi^{\star}) = \xi_{1, 2}^{\star}$, then $0 = \mathd g_j|_{\sigma_j} = \left(
	\tva^{\star} (\xi_1^{\sigma_j})' (\psi^{\star}) +
	\tvb^{\star} (\xi_2^{\sigma_j})' (\psi^{\star}) \right) \mathd \psi$
	and we have
	\begin{align}
		\tva^{\star} (\xi_1^{\sigma_j})' (\psi^{\star}) +
		\tvb^{\star} (\xi_2^{\sigma_j})' (\psi^{\star})  =  0. 
		\label{eq:xi1primexi2primeab}
	\end{align}
	Let us now consider the function $h (\psi) = e^{-i\bdx\cdot\bdxi^{\sigma_j}(\psi)}$. Naturally, we have
	\begin{align}
		h'(\psi^{\star})  =  - i (x_1
		(\xi_1^{\sigma_j})' (\psi^{\star}) + x_2 (\xi_2^{\sigma_j})' (\psi^{\star}))
		e^{- i\tmmathbf{x} \cdot \tmmathbf{\xi}^{\star}}.
		\label{eq:simplederhdash}
	\end{align}
	We can hence show that
	\begin{eqnarray*}
		h'(\psi^{\star}) = 0 & \underset{\left( \ref{eq:simplederhdash} \right)}{\Leftrightarrow} &  x_1
		(\xi_1^{\sigma_j})' (\psi^{\star}) + x_2 (\xi_2^{\sigma_j})' (\psi^{\star})
		= 0 \ 
		 \underset{\left( \ref{eq:xi1primexi2primeab} \right)}{\Leftrightarrow} \
		- \tvb^{\star} x_1 + \text{\tmverbatim{a}}^{\star} x_2 = 0 \\
		& \Leftrightarrow & \tmmathbf{x} \cdot \left( \begin{array}{c}
			- \tvb^{\star}\\
			\text{\tmverbatim{a}}^{\star}
		\end{array} \right) = 0 \ \Leftrightarrow \ \tmmathbf{x} \perp \sigma'_j \  \Leftrightarrow \ \tilde{\bdx} \perp \sigma'_j,
	\end{eqnarray*}
	as expected, since we saw in Section \ref{sec:bridgeandarrow} that the
	vector $\left(- \tvb^{\star},	\tva^{\star} \right)^\transpose$ was tangent to $\sigma_j'$. Moreover, we have used the fact that since the point $\bdxi^\star$ is regular, then at least one of the $(\xi^{\sigma_j}_{1,2})'(\psi^\star)$ is non-zero. 
	%{\link{\qedsymbol}}
\end{proof}

%We introduce a special naming for the points $\bfxi^*$ considered in Proposition~\ref{pr:orthogonal_2}.
%
%\begin{defi}
%Let $\tmmathbf{\tilde x}$ be a non-zero real vector. Call the points 
%of $\sigma'_j$ at which $\tmmathbf{\tilde x}$ is orthogonal to $\sigma_j'$  
%{\em saddle points of $\sigma'$ with respect to 
%$\tmmathbf{\tilde x}$}. 
%\end{defi}

We have therefore seen that the points on $\sigma_j'$ that are not crossing points of several 
\RED{irreducible singularities} can only be active if they are \RED{SOS} as per the Definition \ref{def:saddle}. 
\RED{Note also that the definition of an SOS is different to the usual definition of saddle points for multiple integrals given in introduction. However, there exists a link between the two as discussed briefly in Remark \ref{rem:polynomialjoneslighthill} and also in Section \ref{subsec:2d saddle}.}

\RED{
	\begin{rema}
		\label{rem:polynomialjoneslighthill}
		Both Lighthill \cite{Lighthill78} (Chapter 4.9) and Jones \cite{Jones1982-xs} (Chapter 9.7), when studying the multiple Fourier Transform of the reciprocal of a polynomial in several variables, also had to deal with curved singularities. They realised the importance of what we
		call \RED{SOS}, as well as the fact that
		such points will only lead to a wave field in a direction perpendicular to the singular curve at this point. Their method effectively reduces the study of
		such \RED{SOS} to the study of a usual saddle point of a
		lower-dimensional integral. 
		
		It is also interesting to note that Lighthill discusses a physically-motivated notion of activity of such point by decomposing the singular curve of interest into its active and inactive part. A notion that we render precise here with Proposition \ref{pr:orthogonal_2}.
		
		However, their technique relies on the use of the
		residue theorem, which cannot be used for non-polynomial singularities. Moreover, they do not consider intersections of singularity curves, a situation that we address in the next section.
	\end{rema}
	
}
% saddle points defined by us are different that those defined 
%usually for functions of several variables (e.g.\ \cite{Borovikov1994}).  

%%%%%%%%%%%%%%%%%%%%%%%%%%%%%%%%%%%%%%%%%

\subsection{Transverse intersection of two singularity
traces}\label{sec:transverseactivequadrant}

Let us consider two irreducible singularities of $F$, $\sigma_1$ and $\sigma_2$, with defining functions $g_{1,2}$ and real traces $\sigma_{1,2}'$. Let us assume that
$\sigma_1'$ and $\sigma_2'$ cross transversely at a point
$\tmmathbf{\xi}^{\star}$. As per (\ref{eq:realproperty}), we can define the
real quantities
\begin{align}
  \frac{\partial g_{1, 2}}{\partial \xi_1} (\tmmathbf{\xi}^{\star}) =
  \tva^{\star}_{1, 2} \in \mathbb{R}, \hspace{1em}  \frac{\partial g_{1,
  2}}{\partial \xi_2} (\tmmathbf{\xi}^{\star}) = \tvb^{\star}_{1, 2} \in
  \mathbb{R}, \hspace{1em} \left( \tva^{\star}_{1, 2}
  \right)^2 + \left( \tvb^{\star}_{1, 2} \right)^2 \neq 0, 
  \label{eq:realpropertytransverse}
\end{align}
that give some information on the slopes of the real traces near
$\tmmathbf{\xi}^{\star}$. The transverse crossing requirement means that the
two vectors $\left(\tva^{\star}_1, \tvb^{\star}_1\right)^\transpose$ and $\left( \tva^{\star}_2, \tvb^{\star}_2 \right)^\transpose$ are not co-linear.

Let us assume that a bridge symbol is specified for both singularities. As discussed in Section \ref{sec:bridgeandarrow}, this choice fixes
the sides toward which the vector fields $\tmmathbf{\eta}_1$ and
$\tmmathbf{\eta}_2$ are pointing, where $\tmmathbf{\eta}_{1, 2}$ are defined
as $\tmmathbf{\eta}$ above for any point on $\sigma_{1, 2}'$. This is
illustrated in Figure \ref{fig:transverse-crossing-setup} (left).

\begin{figure}[h]
  \centering{\includegraphics[width=0.6\textwidth]{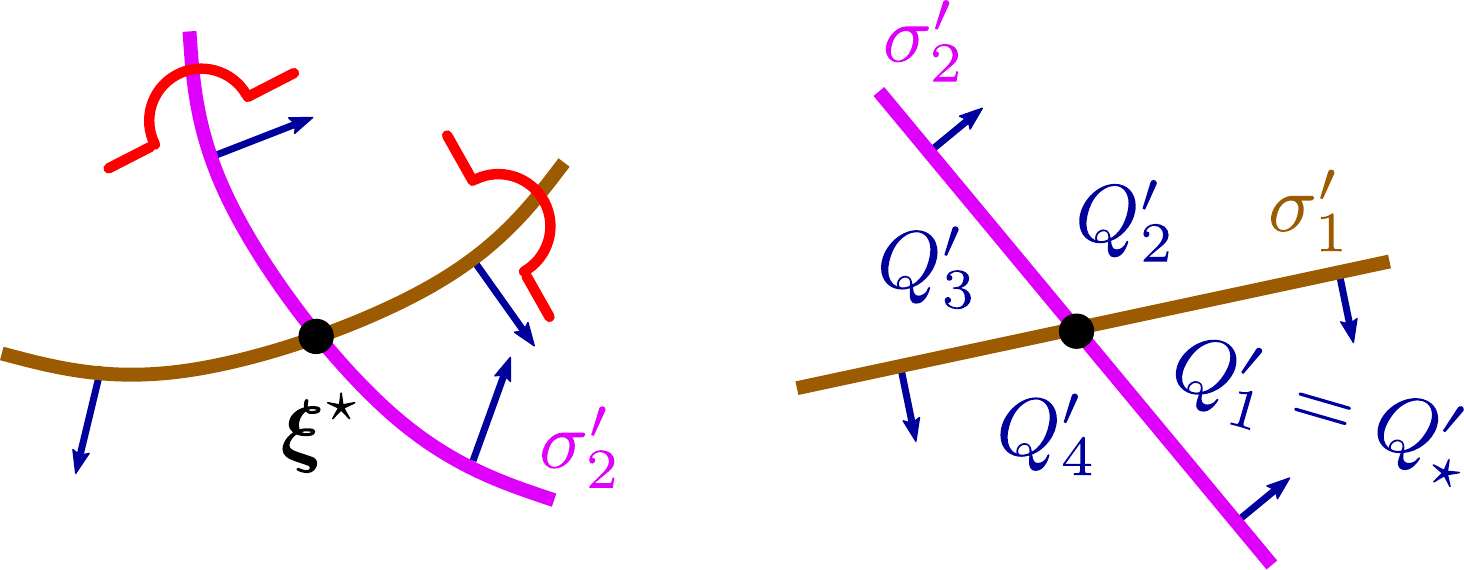}}
  \caption{A bridge configuration of a transverse crossing (left), and definition of the quadrants (right).}
  \label{fig:transverse-crossing-setup}
\end{figure}

What does it tell us about the actual direction of $\tmmathbf{\eta}^{\star}=\bdeta(\bdxi^\star)$? Well, it should be consistent with both the
direction of $\tmmathbf{\eta}_1^{}$ and that of $\tmmathbf{\eta}_2$. In the
configuration of Figure \ref{fig:transverse-crossing-setup}, it is clear that
$\tmmathbf{\eta}^{\star}$ can only point toward one of the four quadrants
$Q_1', \ldots, Q_4'$ defined by the linear approximation of the real traces at
the crossing. This quadrant, that we denote $Q'_\star$, is the one that has arrows of both traces pointing
toward it. For the bridge configuration of Figure \ref{fig:transverse-crossing-setup} (right), we have $Q'_\star=Q_1'$.

Before moving further, it is useful to also draw the lines perpendicular to
$\sigma_1'$ and $\sigma_2'$ at $\tmmathbf{\xi}^{\star}$. To do so, we will use
dashed lines with the same colour as the singularity it corresponds to. These
dashed lines divide the plane into 4 other quadrants that we call
$Q_1^{\perp}, \ldots, Q_4^{\perp}$, as illustrated in Figure \ref{fig:Qperp}. We will call these the
{\em perpendicular quadrants}. Let us also denote by $Q_{\star}^{\perp}$ the
perpendicular quadrant that contains the bisector of $Q_{\star}'$
(see Figure~\ref{fig:Qperp}, right).

\begin{figure}[h]
  \centering{\includegraphics[width=0.6\textwidth]{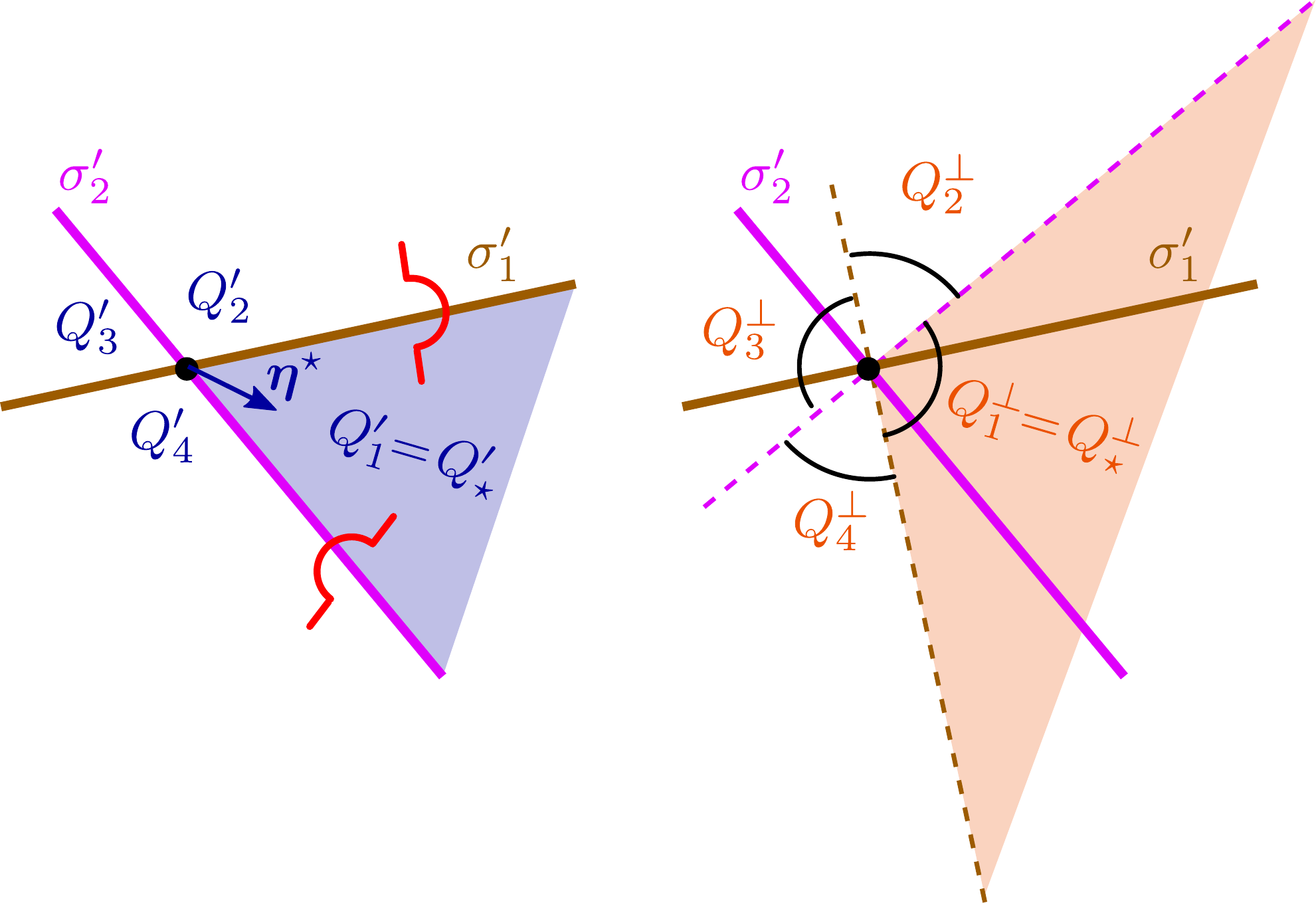}}
  \caption{Definition of the quadrants $Q_1', \ldots, Q_4'$ and $Q_{\star}'$ (left);
  definition of the quadrants   
  $Q_1^{\perp}, \ldots, Q_4^{\perp}$ and $Q_{\star}^{\perp}$ (right).}
  \label{fig:Qperp}
\end{figure}

We are now in a position to formulate (and prove) the main theorem of this
section regarding the activity (or inactivity) of such a transverse crossing
point.

% Consider two singularities $\sigma_1$ and $\sigma_2$ with the real property
%  that intersect transversally at a real point $\tmmathbf{\xi}^{\star}$. Let
%  $Q_1', \ldots, Q_4'$ be the four quadrants naturally defined by 
% the tangential lines to   the real
%  traces $\sigma_1'$ and $\sigma_2'$. For a given bridge configuration, we
%  know within which of these quadrants $\tmmathbf{\eta}^{\star}$ is pointing
%  to (see Figure~\ref{fig:Qperp}, left). Let us call this quadrant $Q_{\star}'$. 
%  
%  Now, let $Q_1^{\perp}, \ldots,
%  Q_4^{\perp}$ be the four quadrants defined by the lines perpendicular to
%  $\sigma_1'$ and $\sigma_2'$ at their intersection. We will call these the
%  perpendicular quadrants. Let us also denote by $Q_{\star}^{\perp}$ the
%  perpendicular quadrant that contains the bisector of $Q_{\star}'$
%  (see Figure~\ref{fig:Qperp}, right).

%  Refer to $Q_{\star}^{\perp}$ as an {\em active quadrant}. 

\begin{theorem}
\label{th:crossing}
  The transverse crossing point of $\sigma_1'$ and $\sigma_2'$ is active if and only if 
  $\tilde{\tmmathbf{x}}$ points to the quadrant $Q_{\star}^{\perp}$. 
\end{theorem}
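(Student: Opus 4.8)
The strategy is to treat the two directions of the equivalence separately, in both cases using the ``picture in the $(\mathrm{Im}[\xi_1],\mathrm{Im}[\xi_2])$-plane'' that was the key idea in the proof of Proposition~\ref{pr:non_orthogonal}. The local geometry of the two singularity sets $\sigma_1,\sigma_2$ near $\bdxi^\star$ is, to leading order, captured by the two real lines
\[
\tva_1^\star\,\mathrm{Im}[\xi_1]+\tvb_1^\star\,\mathrm{Im}[\xi_2]=0,
\qquad
\tva_2^\star\,\mathrm{Im}[\xi_1]+\tvb_2^\star\,\mathrm{Im}[\xi_2]=0,
\]
which (by the transversality assumption) are distinct and split the imaginary plane into four sectors. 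A surface patch $\bdG'$ described locally by a constant vector $\bdeta'$ is represented by the single point $\bdeta'$ in this plane, and the homotopy $\bdG\to\bdG'$ keeping the bridge configuration fixed corresponds exactly to moving that point inside the one fixed sector determined by the bridges --- namely the sector that is the ``imaginary-plane image'' of $Q_\star'$ (the signs of $\tva_i^\star\eta_j+\tvb_i^\star\eta_j$ are precisely what the arrows encode, by (\ref{eq:invariant})). So the point $\bdxi^\star$ is inactive if and only if that sector contains a vector $\bdeta'$ with $\tilde\bdx\cdot\bdeta'<0$, and active otherwise.

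First I would prove the ``if'' direction (i.e.\ $\tilde\bdx\in Q_\star^\perp\Rightarrow$ active). Assume $\tilde\bdx$ points into $Q_\star^\perp$, the perpendicular quadrant containing the bisector of $Q_\star'$. The key elementary geometric fact is that $Q_\star^\perp$ is precisely the polar dual cone of $Q_\star'$: a vector $\bdv$ lies in $Q_\star^\perp$ exactly when $\bdv\cdot\bdw\ge0$ for every $\bdw\in\overline{Q_\star'}$, and $\bdv\cdot\bdw>0$ for every $\bdw$ in the open cone $Q_\star'$. I would verify this by writing the edges of $Q_\star'$ as the two tangent lines and noting that its bisector direction is orthogonal to the bisector of the complementary cone, so the ``perpendicular'' quadrant that catches the bisector of $Q_\star'$ is exactly the set of directions making an acute angle with all of $Q_\star'$. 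Since, after fixing the bridges, any admissible $\bdeta'$ (and hence $\bdeta'$ restricted to $\sigma_1'\cup\sigma_2'$ near $\bdxi^\star$, in particular its value $\bdeta'(\bdxi^\star)$) must point into $\overline{Q_\star'}$, we get $\tilde\bdx\cdot\bdeta'(\bdxi^\star)\ge0$ for every such deformation, so no deformation can achieve the strict negativity required by Definition~\ref{def:acitveandinactive}; hence $\bdxi^\star$ is active.

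For the ``only if'' direction I would prove the contrapositive: if $\tilde\bdx\notin Q_\star^\perp$ then $\bdxi^\star$ is inactive, by exhibiting an explicit admissible local deformation. If $\tilde\bdx\notin\overline{Q_\star^\perp}$, then by the duality above there is a vector $\bdw_0\in Q_\star'$ with $\tilde\bdx\cdot\bdw_0<0$; scaling it small, $\bdeta'\coloneqq\eps\bdw_0$ points into $Q_\star'$ so it respects both bridge configurations, and the straight-line homotopy from $\bdeta^\star$ to $\bdeta'$ stays inside the fixed sector of the imaginary plane and therefore (as in Proposition~\ref{pr:non_orthogonal}) never meets $\sigma_1\cup\sigma_2$; thus $\bdxi^\star$ is inactive. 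The remaining boundary case $\tilde\bdx\in\partial Q_\star^\perp\setminus Q_\star^\perp$, i.e.\ $\tilde\bdx$ perpendicular to exactly one of $\sigma_1',\sigma_2'$ (say $\sigma_1'$) and non-perpendicular to the other, is handled by the same picture: $\tilde\bdx$ is non-orthogonal to $\sigma_2'$, so one can slide $\bdeta'$ within $\overline{Q_\star'}$ to a position strictly inside $Q_\star'$ on the $\sigma_2$-favourable side while keeping $\tilde\bdx\cdot\bdeta'\le0$, and a small further push (using the freedom transverse to $\sigma_1'$, which does not affect the sign against $\tilde\bdx$ at first order since $\tilde\bdx\perp\sigma_1'$, but combined with the strict gain from the $\sigma_2'$ component) makes it strictly negative --- essentially invoking Proposition~\ref{pr:non_orthogonal} applied to $\sigma_2'$ inside the admissible sector. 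I would draw Figure~\ref{fig:Qperp}-style pictures to make each of these moves transparent.

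The main obstacle I anticipate is not the ``generic'' part (where $\tilde\bdx$ is strictly inside or strictly outside $Q_\star^\perp$), which follows cleanly from the polar-duality observation; rather it is the careful treatment of the boundary cases --- $\tilde\bdx$ orthogonal to one trace, or $\tilde\bdx$ lying along an edge of $Q_\star'$ --- where one has to argue that a combined deformation (a little along one constraint, a little along the other) still yields a \emph{strictly} negative inner product while staying admissible, and that the higher-order (quadratic) terms in the Taylor expansion of $g_1,g_2$ in the neighbourhood $\mathcal{U}$ genuinely cannot obstruct the homotopy once $\bdeta'$ is chosen small enough. Making ``small enough'' uniform and checking that the intermediate surfaces remain flattable (via Theorem~\ref{th:flattened}) is the technical heart of the argument.
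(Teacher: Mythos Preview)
Your overall approach is the right one and matches the paper's: the key observation is precisely the polar-duality fact that $Q_\star^\perp$ is the dual cone of $Q_\star'$, so that an admissible $\bdeta'$ (which must lie in the \emph{open} cone $Q_\star'$, by the non-tangency condition of Theorem~\ref{th:flattened}) can be chosen with $\tilde\bdx\cdot\bdeta'<0$ exactly when $\tilde\bdx$ lies outside the closed dual cone. Your treatment of the generic cases --- $\tilde\bdx$ strictly inside or strictly outside $\overline{Q_\star^\perp}$ --- is correct.

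However, your handling of the boundary case is wrong, and this is a genuine error rather than a technicality. When $\tilde\bdx\in\partial Q_\star^\perp$, say $\tilde\bdx\perp\sigma_1'$ so that $\tilde\bdx\cdot\bdt_1^\star=0$ and $\tilde\bdx\cdot\bdt_2^\star>0$, write any admissible $\bdeta'\in Q_\star'$ as $\bdeta'=a\,\bdt_1^\star+b\,\bdt_2^\star$ with $a,b>0$. Then
\[
\tilde\bdx\cdot\bdeta'=a\,(\tilde\bdx\cdot\bdt_1^\star)+b\,(\tilde\bdx\cdot\bdt_2^\star)=b\,(\tilde\bdx\cdot\bdt_2^\star)>0,
\]
strictly, for \emph{every} such $\bdeta'$. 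There is no ``strict gain from the $\sigma_2'$ component'' to exploit: that component contributes positively, and driving its coefficient $b$ to zero forces $\bdeta'$ onto $\partial Q_\star'$, which is forbidden by the non-tangency requirement. Hence no admissible deformation achieves $\tilde\bdx\cdot\bdeta'<0$, and the point is \emph{active} on the boundary, not inactive. The paper notes this explicitly: on $\partial Q_\star^\perp$ the point $\bdxi^\star$ is a SOS for one of the two singularities and is active (compare Proposition~\ref{pr:orthogonal_2}). You should therefore move the boundary case into the ``if'' direction rather than the ``only if'' direction; once you do, the duality argument closes cleanly without any of the delicate ``small further push'' reasoning you anticipated.
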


\begin{proof}
The proof is very similar to those of Propositions \ref{pr:non_orthogonal}--\ref{pr:orthogonal_2}. Namely, if $\bdeta^\star\cdot\tilde{\bdx}<0$ there is nothing to do and the point is automatically inactive. Else, choose a vector $\bdeta'$ satisfying 	$\bdeta'\cdot\tilde{\bdx}<0$ and such that it does not change the initial bridge configuration. This is always possible if $\tilde{\bdx}$ does not point to $Q^\perp_\star$ and impossible if it does. We note also that if $\tilde{\tmmathbf{x}}$ is on the boundary of $Q^\perp_\star$, then $\tmmathbf{\xi}^{\star}$ is automatically a \RED{SOS} for one of the two singularities, and the point is active.	
%Let $\bfxi^\star$ be a crossing point of traces of $\sigma'_1$ and $\sigma_2'$.
%The point $\bfxi^\star$ is inactive if one can find a vector $\bfeta'$
%belonging to the quadrant $Q_{\star}'$ such that $\bfeta' \cdot \tmmathbf{\tilde x} < 0$.
%One can see that such a vector exists if $\tmmathbf{\tilde x}$ does not belong to 
%$Q_{\star}^{\perp}$. 
%{\link{\qedsymbol}}
\end{proof}

We will hence call $Q^\perp_\star$ the {\em active quadrant} of the crossing. For any such transverse crossing, and for every possible bridge configuration, we always have
one active and three inactive quadrants. All possibilities are summarised in
Figure \ref{fig:allbridgetransversecrossing}. The symbol \mycircle{A} means active.

\begin{figure}[h]
 \centering{\includegraphics[width=0.99\textwidth]{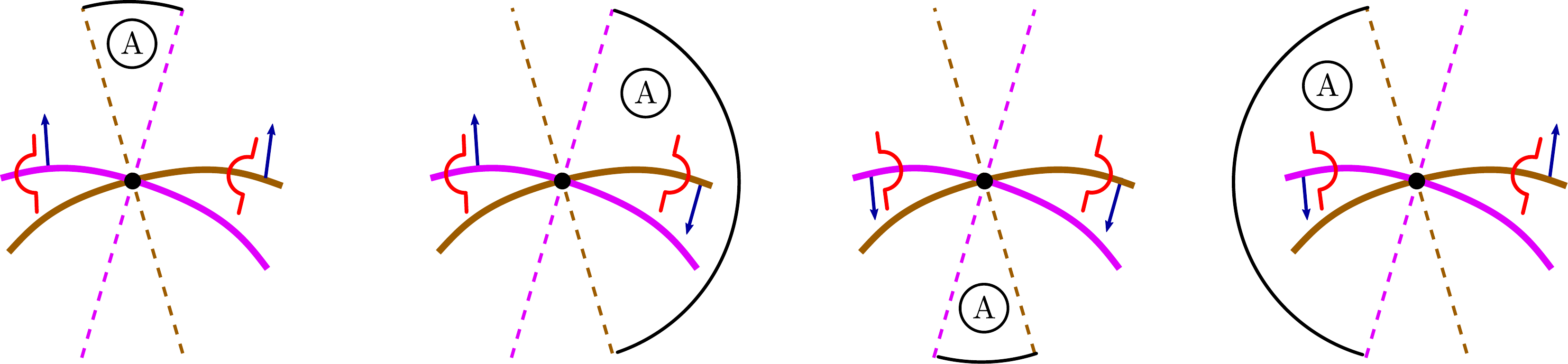}}
  \caption{Active regions of a crossing for all possible bridge
  configurations}
\label{fig:allbridgetransversecrossing}
\end{figure}

Note that, once again, we know that the inactive points are non-contributing, but we do not conclude regarding the contributing nature of the active crossing points. This will be done later when we build an asymptotic approximation of the integral in a neighbourhood of such point. We will see then that such active points are indeed contributing, as long as they do not have the so-called additive property.

%%%%%%%%%%%%%%%%%%%%%%%%%%%%%%%%

\subsection{Tangential crossing of two singularity traces}

Let us again consider two irreducible singularities of $F$, $\sigma_{1,2}$, with defining functions $g_{1,2}$ and real traces $\sigma_{1,2}'$. Let us however assume that,
this time, $\sigma_1'$ and $\sigma_2'$ cross (or touch) tangentially at a point
$\tmmathbf{\xi}^{\star}$. As usual the real quantities $\tva^\star_{1,2}$ and $\tvb^\star_{1,2}$, introduced as in (\ref{eq:realpropertytransverse}), give us some information on the slope of the real traces near
$\tmmathbf{\xi}^{\star}$. In this case, the two slopes should be the same meaning
that $\tva_1^\star \tvb_2^\star=\tvb_1^\star \tva_2^\star$, and as discussed in point (v) of Section \ref{sec:introbridgesymbol}, we can only
have two possible bridge configurations given in Figure
\ref{fig:bridge-tangential}.  %Note that
%the illustrations in Figure \ref{fig:bridge-tangential} assume a
%quadratic-like tangential crossing, however everything here still holds for a
%cubic-like inflection tangential crossing. 
We can now formulate (and prove) the main result of this section.

\begin{theorem}
  A tangential intersection $\bdxi^\star$ of two real traces $\sigma_{1,2}'$ is
  inactive for all but one direction~$\tilde{\tmmathbf{x}}$. The only
  direction $\tilde{\bdx}$ for which $\bdxi^\star$ is active is perpendicular to $\sigma_{1,2}'$ at $\bdxi^\star$, i.e.\  it is such that $\bdxi^\star$ is an active \RED{SOS} on $\sigma_{1,2}$ with respect to this direction $\tilde{\bdx}$.
  \label{th:th20tangentperp}
\end{theorem}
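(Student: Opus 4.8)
The plan is to prove Theorem~\ref{th:th20tangentperp} by combining the analysis already carried out for a single real trace (Propositions~\ref{pr:non_orthogonal}, \ref{pr:orthogonal_1}, \ref{pr:orthogonal_2}) with the constraint on bridge configurations at a tangential point recorded in point~(v) of Section~\ref{sec:introbridgesymbol} (Figure~\ref{fig:bridge-tangential}). First I would dispose of the case where $\tilde{\bdx}$ is \emph{not} orthogonal to the common tangent line of $\sigma_1'$ and $\sigma_2'$ at $\bdxi^\star$. Since the two traces share the same tangent direction there, $\tilde{\bdx}$ is then simultaneously non-orthogonal to $\sigma_1'$ and to $\sigma_2'$; applying Proposition~\ref{pr:non_orthogonal} (which is purely local) to, say, $\sigma_1'$ shows that $\bdxi^\star$ is inactive. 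This already shows inactivity for all directions except the single one perpendicular to $\sigma_{1,2}'$ at $\bdxi^\star$, which by Proposition~\ref{prop:perpsaddle} is exactly the direction for which $\bdxi^\star$ is a SOS on $\sigma_{1,2}$.

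Next I would treat the remaining single direction $\tilde{\bdx}\perp\sigma_{1,2}'$. Here the key point is the tangential-compatibility constraint on the bridge symbols. According to point~(v), at a quadratic touch the vector $\bdeta^\star=\bdeta(\bdxi^\star)$ is one and the same for both traces, and the only two admissible bridge configurations are those of Figure~\ref{fig:bridge-tangential}; in both of them $\bdeta^\star$ is (transverse to and hence) not tangent to the common tangent line, so $\bdeta^\star\cdot\tilde{\bdx}\neq0$. If $\bdeta^\star\cdot\tilde{\bdx}<0$ the point is inactive with no deformation needed; this corresponds to the configuration of Figure~\ref{fig:bridge-tangential} in which the bridge "opens away" from the observation direction, and in that case we should note the SOS is inactive, consistent with the theorem only claiming that \emph{the} active direction (when the SOS is active) is perpendicular. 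If instead $\bdeta^\star\cdot\tilde{\bdx}>0$, I claim the point is active: by Proposition~\ref{pr:bridge_invariant} any admissible local deformation $\bdG\to\bdG'$ must preserve the bridge configuration on \emph{both} $\sigma_1'$ and $\sigma_2'$, so $\bdeta'(\bdxi^\star)$ must still point to the same side of each of the two tangent traces as $\bdeta^\star$ does; but in the tangential configuration those two "same-side" constraints force $\bdeta'(\bdxi^\star)$ into essentially the same half-plane as $\bdeta^\star$ (the half-plane on the $\bdeta^\star$-side of the common tangent), whence $\bdeta'(\bdxi^\star)\cdot\tilde{\bdx}>0$ again. Thus no valid deformation achieves $\bdeta'\cdot\tilde{\bdx}<0$ and, by Definition~\ref{def:acitveandinactive}, $\bdxi^\star$ is active. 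This mirrors exactly the argument of Proposition~\ref{pr:orthogonal_2}.

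To finish I would assemble these pieces: non-perpendicular directions give inactivity (Prop.~\ref{pr:non_orthogonal}); for the unique perpendicular direction, the two admissible tangential bridge configurations split into one inactive case ($\bdeta^\star\cdot\tilde{\bdx}<0$, as in Prop.~\ref{pr:orthogonal_1}) and one active case ($\bdeta^\star\cdot\tilde{\bdx}>0$, as in Prop.~\ref{pr:orthogonal_2}); and in the active case Proposition~\ref{prop:perpsaddle} identifies this as precisely an active SOS on $\sigma_{1,2}$ with respect to $\tilde{\bdx}$. I expect the only genuinely delicate step to be the geometric bookkeeping in the active case — verifying that the simultaneous "same-side of $\sigma_1'$" and "same-side of $\sigma_2'$" constraints really do confine $\bdeta'(\bdxi^\star)$ to a half-plane on which $\tilde{\bdx}$ has positive inner product, since $\sigma_1'$ and $\sigma_2'$ curve away from their common tangent on (possibly) opposite sides. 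This is where I would lean on the picture in Figure~\ref{fig:bridge-tangential} and, if needed, on a second-order Taylor expansion of $g_1$ and $g_2$ at $\bdxi^\star$ (exactly as in the $\xi_1-\xi_2^2$ example following Theorem~\ref{th:flattened}) to pin down on which side of the tangent line each trace sits in a punctured neighbourhood; the inner-product conclusion then follows because $\tilde{\bdx}$ is normal to that tangent line and $\bdeta^\star$ has a strictly positive component along $\tilde{\bdx}$.
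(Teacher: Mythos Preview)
Your proposal is correct and follows essentially the same approach as the paper's proof, just organized differently: you split first on whether $\tilde{\bdx}$ is perpendicular to the common tangent, while the paper splits first on the sign of $\bdeta^\star\cdot\tilde{\bdx}$. One small caution: invoking Proposition~\ref{pr:non_orthogonal} ``to, say, $\sigma_1'$'' glosses over the requirement that the deformation must also avoid $\sigma_2$; but this is immediate from the tangential-compatibility constraint (point~(v)), since at $\bdxi^\star$ the two traces share a tangent line and hence the same ``same-side'' half-plane, so a $\bdeta'$ respecting one bridge automatically respects the other --- and for the same reason your ``delicate step'' is not actually delicate, as the activity question is decided at $\bdxi^\star$ itself where only the common tangent matters, not the second-order curvature of the two traces.
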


\begin{proof}
  Let us do this for the bridge configuration of Figure
  \ref{fig:bridge-tangential} (left) since the other configuration can be dealt with similarly. The proof is again similar to those of Propositions \ref{pr:non_orthogonal}--\ref{pr:orthogonal_2} and Theorem \ref{th:crossing}. It is clear
  that $\tmmathbf{\eta}^{\star}=\bdeta(\bdxi^\star)$ should point toward the ``interior''
  of the $\sigma_2'$ curve. If, as illustrated in Figure \ref{fig:active-tangential} (left), $\tmmathbf{\eta}^{\star} \cdot \tilde{\tmmathbf{x}}<0$, then nothing needs to be done and the point is inactive. If instead $\tmmathbf{\eta}^{\star} \cdot \tilde{\tmmathbf{x}}\geq0$ and $\tilde{\tmmathbf{x}}$ is not perpendicular to the real traces at $\bdxi^\star$,  as illustrated in Figure \ref{fig:active-tangential} (centre), we can choose a new vector $\bdeta'$ that does not change the bridge configuration and is such that $\tmmathbf{\eta}' \cdot \tilde{\tmmathbf{x}}<0$, i.e.\ the point $\bdxi^\star$ is inactive. Finally, if $\tmmathbf{\eta}^{\star} \cdot \tilde{\tmmathbf{x}}>0$ and $\tilde{\bdx} \perp \sigma_{1,2}'$ at $\bdxi^\star$ then it is impossible to find a suitable vector $\bdeta'$, as illustrated in Figure
  \ref{fig:active-tangential} (right), and $\bdxi^\star$ is active in that case. 
 \begin{figure}[h]
    \centering{\includegraphics[width=0.7\textwidth]{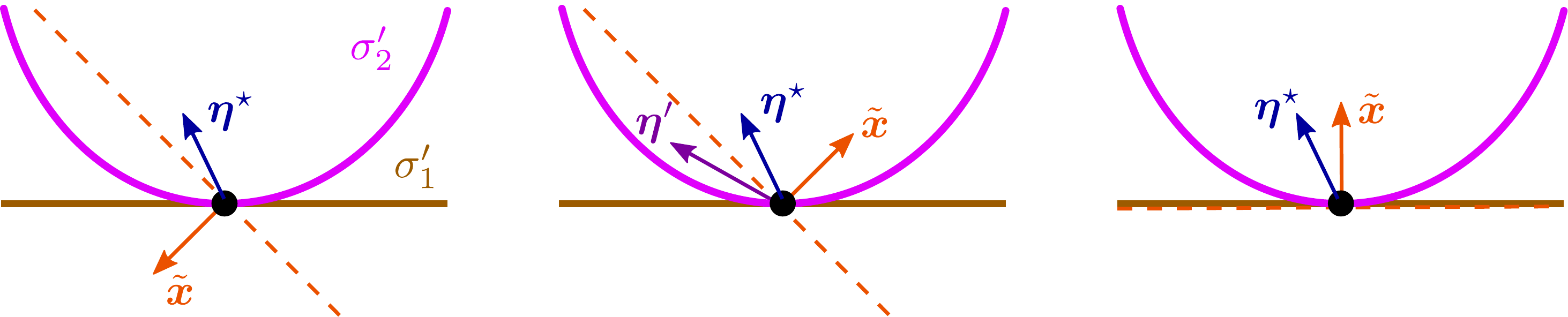}}
   \caption{Visualisation of the proof of Theorem \ref{th:th20tangentperp} for
    different directions $\tilde{\tmmathbf{x}}$.}
\label{fig:active-tangential}
  \end{figure}
%{\link{\qedsymbol}}
\end{proof}

\begin{rema}
  \label{rem:notriple}For the sake of brevity and clarity, we will only
  consider the important (and most common) cases described in this Section
  \ref{sec:sec4}. It needs to be said, however, that more complicated
  situations may occur, such as a crossings involving three or more distinct
  singularities for example. If and when these occur in practice one will need
  to carefully repeat/adapt the arguments displayed in this section. In fact
  an example of a triple crossing will be given and dealt with in
  {\cite{Part6B}}.
\end{rema}
%%%%%%%%%%%%%%%%%%

\subsection{Additive crossings}

The important concept of additive crossing for two singularities 
was introduced in \cite{Assier2018a}. 
Namely, consider a function $F$ with the real property and two of its singularities $\sigma_1$ and $\sigma_2$ with defining functions $g_{1,2}$. Assume that their real traces $\sigma_{1,2}'$ cross transversally at a point $\bdxi^\star$, i.e.\ we are in the situation of Section~\ref{sec:transverseactivequadrant}. \RED{We say that the function $F$ has the additive crossing property at $\bdxi^\star$} if, in some neighbourhood of $\bdxi^\star$, $F$ can be locally written
\begin{equation}
	F(\bfxi) = F_1 (\bfxi) + F_2 (\bfxi),
	\label{eq:additive_1}
\end{equation}
where
$F_1$ is regular at the singularity $\sigma_2$, and 
$F_2$ is regular at the singularity $\sigma_1$.

%characterized by functions $g_1(\bfxi)$ and $g_2(\bfxi)$,
%having the real property 
%and crossing at a real point $\bfxi^\star$. 
%A crossing is additive if in some neighborhood of $\bfxi^\star$ 
%there exists a germ of $F$ for which
%\begin{equation}
%F(\bfxi) = F_1 (\bfxi) + F_2 (\bfxi),
%\label{eq:additive_1}
%\end{equation}
%such that 
%$F_1$ is regular at the singularity $\sigma_2$, and 
%$F_2$ is regular at the singularity $\sigma_1$. 

We demonstrated in \cite{Assier2018a}, by means of Puiseux series, that the following condition is sufficient for a crossing to be additive.

\begin{proposition}
\label{pr:additive_1}
Let $F$ be a function that grows algebraically near $\sigma_{1,2}$, with growth exponents \RED{strictly} bigger than~$-1$.
For some point $\bfxi\notin\sigma_1\cup\sigma_2$ near $\bfxi^\star$, let the values 
$F(\bfxi ; \sigma_1)$, $F(\bfxi ; \sigma_2)$ and $F(\bfxi ; \sigma_1 \sigma_2)$
denote the values of $F$ at $\bfxi$ after analytical continuations along some small loops
bypassing the singularities $\sigma_1$, $\sigma_2$, and, successively $\sigma_1$ and $\sigma_2$, 
all in the positive direction. 
$F(\bfxi)$ denotes the value before all bypasses. 
Then, $F$ has the additive crossing property \RED{at} $\bfxi^\star$ if
\begin{equation}
F(\bfxi) + F(\bfxi ; \sigma_1 \sigma_2) = 
F(\bfxi; \sigma_1 ) + F(\bfxi ; \sigma_2) 
\end{equation}   
for $\bfxi$ taken in some neighbourhood of $\bfxi^\star$. 
\end{proposition}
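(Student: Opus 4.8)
The plan is to use Puiseux series (Newton--Puiseux expansions) to express $F$ locally near $\bdxi^\star$ and to read off the decomposition \eqref{eq:additive_1} directly from the monodromy data. First I would set up convenient local coordinates: since the crossing is transverse, introduce new coordinates $(p,q)$ (affine combinations of $\xi_1-\xi_1^\star$ and $\xi_2-\xi_2^\star$ built from the $\tva_{1,2}^\star$, $\tvb_{1,2}^\star$) so that, near $\bdxi^\star$, the singularity set $\sigma_1$ is $\{p=0\}$ and $\sigma_2$ is $\{q=0\}$. In these coordinates the algebraic growth hypothesis with exponents strictly greater than $-1$ means that, for fixed small $q\neq 0$, the function $p\mapsto F$ has at worst a branch point of the stated type at $p=0$, and symmetrically in $q$. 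The key structural fact I would invoke is that $F$ admits a convergent expansion of the form $F=\sum_{m,n} c_{m,n}\, p^{m/M} q^{n/N}$ where $m,n$ range over integers bounded below (by something strictly greater than $-M$, resp.\ $-N$, because of the growth exponents), and $M,N$ are the ramification indices at $\sigma_1,\sigma_2$ respectively.

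Next I would translate the monodromy hypothesis
\[
F(\bfxi) + F(\bfxi;\sigma_1\sigma_2) = F(\bfxi;\sigma_1) + F(\bfxi;\sigma_2)
\]
into a condition on the coefficients $c_{m,n}$. Bypassing $\sigma_1$ in the positive direction multiplies the term $p^{m/M}q^{n/N}$ by $e^{2\pi i m/M}$; bypassing $\sigma_2$ multiplies it by $e^{2\pi i n/N}$; doing both multiplies it by $e^{2\pi i(m/M+n/N)}$. Hence the hypothesis reads
\[
\bigl(1+e^{2\pi i m/M}e^{2\pi i n/N} - e^{2\pi i m/M} - e^{2\pi i n/N}\bigr)\,c_{m,n}=0
\quad\text{for all }m,n,
\]
and the factor on the left is exactly $(1-e^{2\pi i m/M})(1-e^{2\pi i n/N})$. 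Therefore $c_{m,n}=0$ whenever $m/M\notin\mathbb{Z}$ \emph{and} $n/N\notin\mathbb{Z}$ simultaneously; equivalently, every nonzero term is regular in $p$ (i.e.\ $m/M\in\mathbb Z$, so it is a nonnegative power of $p$ once we use the lower bound on $m$) or regular in $q$. I would then define $F_2$ to be the sum of all terms with $m/M\in\mathbb Z_{\ge 0}$ (these are holomorphic in $p$, hence regular at $\sigma_1$) and $F_1$ the sum of the remaining terms, which necessarily have $n/N\in\mathbb Z_{\ge0}$ and are thus regular at $\sigma_2$; a term with both indices integral can be assigned to either piece, say to $F_1$. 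One checks $F=F_1+F_2$ and that each piece has the required regularity, giving \eqref{eq:additive_1}.

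The main obstacle I anticipate is making the Puiseux-type expansion rigorous in \emph{two} variables simultaneously near the transverse crossing: one needs that $F$, holomorphic off $\{pq=0\}$ with the prescribed algebraic (branching or polar, but here exponents $>-1$) behaviour along each branch, genuinely admits a jointly convergent series in $p^{1/M},q^{1/N}$ on a punctured polydisc. This is where one must be careful that the local branching structure along $\sigma_1$ does not vary with $q$ (and vice versa) — this follows from the irreducibility and smoothness of $\sigma_1,\sigma_2$ assumed throughout, and from the fact that the crossing is an isolated transverse intersection, so that on a small enough polydisc the ramification indices are constant. A secondary technical point is the convergence and regularity of the rearranged subseries defining $F_1$ and $F_2$, which follows from absolute convergence of the original double series on the punctured polydisc together with the lower bounds on the exponents. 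Once these analytic points are in place, the algebraic identity $(1-e^{2\pi i m/M})(1-e^{2\pi i n/N})$ does all the work, and the splitting is immediate. (This is essentially the argument given in \cite{Assier2018a}; here I would simply streamline it in the transverse-crossing coordinates introduced above.)
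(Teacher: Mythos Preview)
Your proposal is correct and matches the paper's approach: the paper does not give its own proof of this proposition but simply states that it was demonstrated in \cite{Assier2018a} ``by means of Puiseux series,'' which is exactly the route you take (local coordinates straightening the crossing, double Puiseux expansion, the monodromy identity factoring as $(1-e^{2\pi i m/M})(1-e^{2\pi i n/N})c_{m,n}=0$, and the resulting split into $F_1+F_2$). Your identification of the role of the exponent condition ($>-1$ forces integer exponents to be nonnegative, hence the corresponding subseries is genuinely regular) is also on point; the only cosmetic slip is that you first put all terms with $m/M\in\mathbb{Z}_{\ge0}$ into $F_2$ and then say the doubly-integral terms go to $F_1$, but this is just a wording inconsistency and does not affect the argument.
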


This proposition is a convenient tool to check whether a crossing of two branch lines 
is additive or not. But importantly, and more related to the topic of the present work, we can show the following:

\begin{theorem}
	\label{th:additivedoesnotcontribute}
Let $F$ be a function with the real property and with the additive crossing property for two singularities $\sigma_{1,2}$ and their real crossing $\bdxi^\star\in\sigma_1'\cap\sigma_2'$.
%Let $\bfxi^\star$ be a real crossing of $\sigma_1$ and $\sigma_2$. Let 
%$F$ have additive crossing property at $\bfxi$. 
Consider an observation direction $\tmmathbf{\tilde x}$ that is not orthogonal to $\sigma_1'$
nor to $\sigma_2'$ at $\bfxi^\star$. Then, the point $\bdxi^\star$ is non-contributing to the asymptotics of the resulting physical field $u$.
\end{theorem}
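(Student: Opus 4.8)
The plan is to exploit the additive decomposition $F=F_1+F_2$ valid in a neighbourhood $\mathcal{U}$ of $\bdxi^\star$, so that the local contribution of $\bdxi^\star$ to $u$ splits as a sum of two contributions, one from $F_1$ and one from $F_2$. Since $F_1$ is regular at $\sigma_2$, near $\bdxi^\star$ its only singularity is $\sigma_1$; likewise $F_2$'s only singularity near $\bdxi^\star$ is $\sigma_2$. Thus, as far as the local behaviour near $\bdxi^\star$ is concerned, $F_1$ behaves like a function whose only singular trace through $\bdxi^\star$ is $\sigma_1'$, and $F_2$ like one whose only singular trace is $\sigma_2'$. The point $\bdxi^\star$ is therefore, from the point of view of $F_1$, a point belonging to a single singularity trace $\sigma_1'$; and from the point of view of $F_2$, a point belonging to the single trace $\sigma_2'$.

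The key step is then to invoke Proposition~\ref{pr:non_orthogonal}: since $\tilde{\bdx}$ is not orthogonal to $\sigma_1'$ at $\bdxi^\star$, the point is inactive with respect to $F_1$, i.e.\ one can locally deform the integration surface (keeping the $\sigma_1$-bridge configuration fixed) so that $\tilde{\bdx}\cdot\bdeta'(\bdxi^\star)<0$, making the $F_1$-integrand exponentially small on that patch; hence $\bdxi^\star$ contributes nothing to the $F_1$-part of $u$. Similarly, since $\tilde{\bdx}$ is not orthogonal to $\sigma_2'$ at $\bdxi^\star$, the same reasoning with Proposition~\ref{pr:non_orthogonal} applied to $F_2$ shows $\bdxi^\star$ is non-contributing to the $F_2$-part. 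Adding the two, $\bdxi^\star$ is non-contributing to $u$. I would phrase this carefully using a smooth cutoff localising the integral to $\mathcal{U}$, writing $u=u_{\mathrm{loc}}+u_{\mathrm{rest}}$ where $u_{\mathrm{loc}}=\iint_{\bdG\cap\mathcal{U}}\chi F e^{-i\bdx\cdot\bdxi}\,\mathd\bdxi$, then splitting $u_{\mathrm{loc}}=\iint \chi F_1 e^{-i\bdx\cdot\bdxi}+\iint \chi F_2 e^{-i\bdx\cdot\bdxi}$, and deforming each surface independently.

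The main obstacle is making rigorous the claim that the bridge configurations can be chosen \emph{independently} for the two pieces. For the full integrand $F=F_1+F_2$ on the original surface $\bdG$, the surface near $\bdxi^\star$ has a single vector field $\bdeta$ and hence a single pair of bridge symbols, one for $\sigma_1'$ and one for $\sigma_2'$, which by point (iv) of Section~\ref{sec:introbridgesymbol} can indeed be chosen independently at a transverse crossing. But once we split the integrand, we want to deform the $F_1$-integral's surface using only the freedom compatible with the $\sigma_1$-bridge, \emph{ignoring} what happens near $\sigma_2$ (where $F_1$ is regular, so any deformation is harmless there), and symmetrically for $F_2$. One must check that the two deformed surfaces $\bdG_1'$ and $\bdG_2'$ each remain admissible for their respective integrands — this is where regularity of $F_1$ at $\sigma_2$ (and of $F_2$ at $\sigma_1$) is essential, since it means crossing $\sigma_2$ does no harm to the $F_1$-integral. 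A secondary technical point is ensuring the cutoff $\chi$ does not introduce spurious boundary contributions: its support boundary lies at non-singular points where, by the discussion preceding Proposition~2.8, the integrand can be made exponentially small, so those terms are negligible as $r\to\infty$. Once these points are addressed, the result follows immediately from Proposition~\ref{pr:non_orthogonal} applied twice.
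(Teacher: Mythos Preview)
Your proposal is correct and follows essentially the same approach as the paper: split $F=F_1+F_2$ on a neighbourhood of $\bdxi^\star$, break the local integral into the two corresponding pieces, and apply Proposition~\ref{pr:non_orthogonal} to each piece separately, deforming the two surfaces independently (the paper likewise stresses that there need not exist a common deformation for both). The paper's version is shorter---it simply restricts to $\bdG\cap U$ rather than introducing a cutoff $\chi$---but the logical content is identical, and your discussion of why the independent deformations are admissible (regularity of $F_1$ at $\sigma_2$ and of $F_2$ at $\sigma_1$) makes explicit a point the paper leaves implicit.
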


\begin{proof}
Represent $F$ as $F_1 + F_2$ in some neighbourhood $U$ of $\bfxi^\star$, $F_1$ being regular at $\sigma_2$ and $F_2$ being regular at $\sigma_1$. 
Split the integral (\ref{eq:FourierIntegralIndented}) as follows: 
\[
u(\tmmathbf{x} ; k) = 
\iint_{\bdG \setminus U} F(\bfxi)e^{-i \tmmathbf{x} \cdot \bfxi} {\rm d} \bfxi + 
\iint_{\bdG \cap U} F_1 (\bfxi)e^{-i \tmmathbf{x} \cdot \bfxi} {\rm d} \bfxi + 
\iint_{\bdG \cap U} F_2 (\bfxi)e^{-i \tmmathbf{x} \cdot \bfxi} {\rm d} \bfxi .
\]
Consider the second and the third terms. According to Proposition~\ref{pr:non_orthogonal}, 
one can deform the integration surfaces for each of these terms making the 
integrand exponentially small. Note that the deformation for these terms may be different, 
i.e.\ there may not exist a {\em common} deformation for both integrals.
%{\link{\qedsymbol}}
\end{proof}

The meaning of the theorem is important: even if such additive crossing point may formally be {\em active} as seen in Section \ref{sec:transverseactivequadrant}, it is always {\em non-contributing} and should hence be discarded when building the asymptotics of the physical field.

Let us summarise this section. The only special points that we need to consider are the only potentially contributing points, that is the \RED{SOS} points in the specific sense introduced in Section~\ref{sec:pointonsinglesingu} (see Proposition \ref{pr:orthogonal_2}) and the active transverse crossing points (see Theorem~\ref{th:crossing}) that do not have the additive crossing property (see Theorem \ref{th:additivedoesnotcontribute}). All the other points are non-contributing. 

% be a limited  special points, i.~e.\ the points that contribute terms that 
%are not exponentially small are: saddle points in the specific sense introduced here and crossings 
%of singularities under the condition formulated in Theorem~\ref{th:crossing}. A crossings of singularities 
%should be excluded from the set of special points if the crossing is additive. 

%\begin{rema}
%
%We don't consider some special cases, e.g.\ when the singularity trace $\sigma_j'$ is a straight line
%and $\tmmathbf{\tilde x}$ is orthogonal to this line. One can see that in this case
%the estimation of the integral may be non-local (see \cite{Ice2021}). Such directions 
%$\tmmathbf{\tilde x}$ form a discrete set and require a special consideration.   
%
%\end{rema}

%%%%%%%%%%%%%%%%%%%%%%%%%%%%%%%%%%%%%%

\section{Estimation of the integrals near the active special points}\label{sec:nearactive}

Here we describe how the integral of the form (\ref{eq:FourierIntegralIndented}) can be estimated 
near the special points that have been declared potentially contributing in the previous section. In doing so, we will explicitly construct the corresponding leading term in the asymptotic expansion (as $r\to\infty$) of the physical field $u$. In particular, we will show that these leading terms are not exponentially vanishing, therefore proving that these special points are indeed contributing.
%Everywhere we are looking for the leading terms of the asymptotics (with respect to $r \to \infty$). 

We adopt the following strategy to estimate the integral near a special point: 
%scheme of the taking the integrals is as follows.

\begin{itemize}
 
\item 
For such a special point $\bfxi^\star$, we take a neighbourhood $\mathcal{B}^\star
 \subset \mathbb{R}^2$ and build a (flattable)
deformed integration surface $\bdG'_\text{loc}$ over~$\mathcal{B}^\star$. We require that no other special point belong to this neighbourhood, and that the vector $\bfeta'$ describing 
$\bdG'_\text{loc}$ provides exponential decay over the boundary 
$\mathcal{C}^\star = \ptl \mathcal{B}^\star$, i.e.\
that 
\begin{equation}
\bfeta'(\bfxi^r) \cdot \tmmathbf{\tilde x} < 0
\quad 
\mbox{for}
\quad
\bfxi^r \in \mathcal{C}^\star. 
\label{eq:boundary}
\end{equation} 
This surface $\bdG'_\text{loc}$ should not cross the singularities of $F$ and it should have the same bridge configuration as $\bdG$. 

\item 
The integrand $F(\bfxi) \exp \{ - i \tmmathbf{x}\cdot\bfxi\}$ 
is simplified by means of a local approximation as $\bdxi \approx \bdxi^\star$.
%, making it possible to take the integral in a closed form. To do this, we only retain the dominant terms in the series expansion of $F$ and the exponent when $\bdxi \approx \bdxi^\star$ that maintain the correct singularity structure. 

\item
The integral of this simplified integrand is estimated.
Formally, the integral 
is taken over  $\bdG'_\text{loc}$, but the structure of the integral is such that we can extend the surface $\bdG'_\text{loc}$ over $\mathcal{B^\star}$ to a surface $\bdG'$ over the whole of $\mathbb{R}^2$ in such a way that the integral over $\bdG' \setminus \bdG'_\text{loc}$ is exponentially small. Thus, 
the integral over $\bdG'_\text{loc}$ can be replaced by the integral over 
$\bdG'$ without changing the leading term of the integral. 
\end{itemize}

For each considered case, this practical construction of $\bdeta'$ and $\bdG'_{\text{loc}}$ will be facilitated by an appropriate change of variables $\psi: (\xi_1,\xi_2) \rightarrow (\zeta_1,\zeta_2)$ and the construction of a vector field $\bdeta''$, a local surface $\bdG_{\text{loc}}''=\psi(\bdG'_{\text{loc}})$ and a neighbourhood $\mathcal{B}''=\psi(\mathcal{B}^\star)$.

%%%%%%%%%%%%%%%%%%%%%%%%%%%%%%%%
\subsection{The case of a saddle on a singularity \RED{(SOS)}}\label{sec:awaysaddle}
\label{subsec:saddle}

Let $\sigma_j$ be an irreducible singularity component of $F$ with the real property and with defining function~$g_j$. Let
$\sigma_j'$ be its real trace, and let $\tmmathbf{\xi}^{\star} \in \sigma_j'$
be such that it does not belong to any other singularity components. Assume now that $\bdxi^\star$ is an active \RED{SOS} on $\sigma_j$ with respect to some observation direction $\tilde{\bdx}$, as defined in Section \ref{sec:pointonsinglesingu}. Assume also that $\bdxi^\star$ is an isolated active point, i.e.\ no other points are active in a neighbourhood of $\bdxi^\star$. As always, introduce the real quantities $\tva^\star$ and $\tvb^\star$ as in (\ref{eq:realproperty}). We will further assume that the function $g_j$ has a quadratic term, i.e.\ there exists real numbers $(\tilde{\alpha},\tilde{\beta},\tilde{\gamma})\neq(0,0,0)$ such that as $\bdxi\to\bdxi^\star$
\begin{align}
	g_j (\xi_1, \xi_2) 
	& =  \text{\tmverbatim{a}}^{\star} (\xi_1 -
	\xi_1^{\star}) + \text{\tmverbatim{b}}^{\star} (\xi_2 - \xi_2^{\star}) +
	\tilde{\alpha} (\xi_1 - \xi_1^{\star})^2 +\tilde{\beta}(\xi_2 - \xi_2^{\star})^2+\tilde{\gamma}(\xi_1 - \xi_1^{\star})(\xi_2 - \xi_2^{\star}) \nonumber \\ 
	&+\mathcal{O}(\text{h.t.}),
	\label{eq:gjquadratic}
\end{align}
where h.t. represents terms of order 3 and higher. 
Let us now introduce the quantities
\begin{align}
	\zeta_1 = \tvb^\star (\xi_1-\xi_1^\star)-\tva^\star(\xi_2-\xi_2^\star) ,\quad \zeta_2 = g_j(\xi_1 , \xi_2), \quad \Lambda=\text{\tmverbatim{a}}^{\star} (\xi_1 -
	\xi_1^{\star}) + \text{\tmverbatim{b}}^{\star} (\xi_2 - \xi_2^{\star})
	\label{eq:changeofvarzeta}
\end{align}
and consider the change the variables $\psi: \, (\xi_1 , \xi_2) \to (\zeta_1 , \zeta_2)$. The function $g_j$ having the real property, this change of variables is real on any real neighbourhood $\mathcal{B}^\star$ that is hence transformed by $\psi$ into some domain $\mathcal{B}''$, which is a real neighbourhood of zero. In the new variables, the singularity is simply defined by the line $\zeta_2 = 0$. The Jacobian of this transformation is equal to $((\tva^\star)^2+(\tvb^\star)^2)^{-1}$, and the unit vector $\bde_{\zeta_2}$ can be shown to be equal to $\bdn^\star$ at $\bdxi^\star$, where the unit vector $\bdn^\star$ is defined in (\ref{eq:ntvect}). Note that using the quantities (\ref{eq:changeofvarzeta}) and Appendix \ref{app:quadratic}, and noting that $\mathcal{O}(\zeta_2)=\mathcal{O}(\Lambda)$, we can show that there is a unique real number $\alpha$ (assumed thereafter to be non-zero) \RED{ proportional to the curvature of $\sigma_j'$ (as shown in Appendix \ref{app:quadratic})} such that (\ref{eq:gjquadratic}) can be rewritten
\begin{align}
	\zeta_2=\Lambda-\alpha \zeta_1^2+\mathcal{O}(\zeta_2^2+\zeta_1 \zeta_2+\text{h.t.}).
	\label{eq:grotatedvarsaddle}
\end{align}

We want to build a flattable surface $\tmmathbf{\Gamma}'_{\tmop{loc}}$ over a
real neighbourhood $\mathcal{B}^{\star}$ of $\tmmathbf{\xi}^{\star}$, such
that its defining vector field $\tmmathbf{\eta}'$ satisfies $\tmmathbf{\eta}'
\cdot \tilde{\tmmathbf{x}} < 0$ on $\mathcal{C}^{\star} = \partial
\mathcal{B}^{\star}$, with the aim of estimating the local integral
\begin{align}
	u_\text{loc}(\bdx)=\iint_{\bdG'_{\text{loc}}} F(\bdxi) e^{-i\bdx\cdot\bdxi}\, \mathd \bdxi = \iint_{\bdG''_{\text{loc}}}
	F(\bfxi(\tmmathbf{\zeta})) 
	e^{- i \bdx \cdot \bdxi(\tmmathbf \zeta)} 
	\left|\frac{\ptl(\xi_1 , \xi_2)}{\ptl(\zeta_1 , \zeta_2)}\right| 
	{\rm d} \tmmathbf{\zeta},
	%\label{eq:saddle_changed} 
	\label{eq:localintfirst}
\end{align}
where $\bdG''_{\text{loc}}$ is a flattable surface over $\mathcal{B}''=\psi(\mathcal{B}^\star)$ in the $(\zeta_1 , \zeta_2)$
coordinates. Let this surface be described by the vector $\bfeta''$, which we will build explicitly.

Now, since $\bdxi^\star$ is an active \RED{SOS}, by Proposition \ref{prop:perpsaddle}, we know that $\tilde{\bdx}$ is orthogonal to $\sigma_j'$ at $\bdxi^\star$. Hence, we can write
\begin{align}
	\tilde{\bdx}=s \bdn^\star=s(\tva^\star,\tvb^\star)^\transpose/\sqrt{(\tva^\star)^2+(\tvb^\star)^2},
	\label{eq:xtildewiths}
\end{align}
where $s$ is either $+1$ or $-1$ and $\bdn^\star$ is the unit normal vector introduced in (\ref{eq:ntvect}). Each value of $s$ naturally corresponds to a different bridge configuration to ensure that the point is active. More precisely, $s=+1$ (resp. $s=-1$) corresponds to the bridge configuration where $\bdG$ bypasses $\sigma_j'$ from above (resp. below) at $\bdxi^\star$ in the $\bdn^\star$ direction, or , equivalently, in the $\bde_{\zeta_2}$ direction.

Using (\ref{eq:grotatedvarsaddle}) and (\ref{eq:xtildewiths}), we find that:
\begin{align}
	\tmmathbf{x} \cdot \tmmathbf{\xi} &=  \tmmathbf{x} \cdot \tmmathbf{\xi}^{\star} + sr^\star (\zeta_2 +
	\alpha \zeta_1^2)+\mathcal{O}(\zeta_2^2 + \zeta_1 \zeta_2+\text{h.t.}), \text{ with } r^\star=r/\sqrt{\left( \tva^{\star} \right)^2 +
		\left( \tvb^{\star} \right)^2},
	\label{eq:rewritexdotxiinzeta}
\end{align}

Let  $\mathcal{B}''$ be the disk of radius $\rho$ centered at $\boldsymbol{0}$: 
\[
\mathcal{B}'' = \{ \tmmathbf{\zeta}^r\in\mathbb{R}^2 : (\zeta_1^r)^2 + (\zeta_2^r)^2 \le \rho^2  \},
\]
and choose  $\mathcal{B}^\star$ to be $\psi^{-1} (\mathcal{B}'')$. 
Take $\rho$ small enough so that the
terms $\mathcal{O}(\zeta_2^2 + \zeta_1 \zeta_2+\text{h.t.})$ can be neglected.
Let us represent the surface $\bdG''_\text{loc}$ by the vector field $\bfeta'' (\tmmathbf{\zeta}^r)$ defined by 
\begin{equation}
	\eta''_1 = - \text{sign}(\alpha)\beta s \zeta^r_1,
	\qquad 
	\eta''_2 = |\alpha| \beta s (\rho^2 - 2 (\zeta^r_2)^2),
	\label{eq:good_surf}
\end{equation}
where $\tmmathbf{\zeta}^r$ is the real part of $\tmmathbf{\zeta}$, 
and $\beta$ is an arbitrary small positive value. 
Note that, using (\ref{eq:rewritexdotxiinzeta}) and (\ref{eq:good_surf}), for $\tmmathbf{\zeta}=(\zeta_1,\zeta_2)\in\bdG''_\text{loc}$, we have
\[
{\rm Im}[\bdx\cdot\bdxi(\bdzeta)]\approx {\rm Im}[s r^\star (\zeta_2 + \alpha \zeta_1^2) ] =  |\alpha| \beta r^\star (\rho^2 - 2 |\tmmathbf{\zeta}^r|^2). 
\] 
One can see that this value is strictly negative on the boundary of $\mathcal{B}''$, where $|\bdzeta^r|=\rho$. Hence, the integrand of (\ref{eq:localintfirst}) is exponentially small 
there, as expected. Moreover, on $\mathcal{B}''\cap \sigma_j'$, i.e.\ where $\zeta_2^r = 0$, we have $\eta''_2=|\alpha| \beta s \rho^2 \neq 0$. This means that $\bdeta''$ is not zero on the singularity, and neither is it tangent to it. Hence there is always a small enough choice of $\beta$ so that the surface does not cross 
the singularity. The presence of $s$ ensures that $\bdeta''$ coincides with the correct bridge configuration.

The surface $\bdG'$ can then simply be defined as
$\psi^{-1} (\bdG'')$, and a schematic representation of the resulting vector field $\bfeta'$ on $\mathcal{C}^\star$ is shown in Figure~\ref{fig:set_saddle} (right) for the bridge configuration given in Figure~\ref{fig:set_saddle} (left).

\begin{figure}[h]
	\centering{\includegraphics[width=0.6\textwidth]{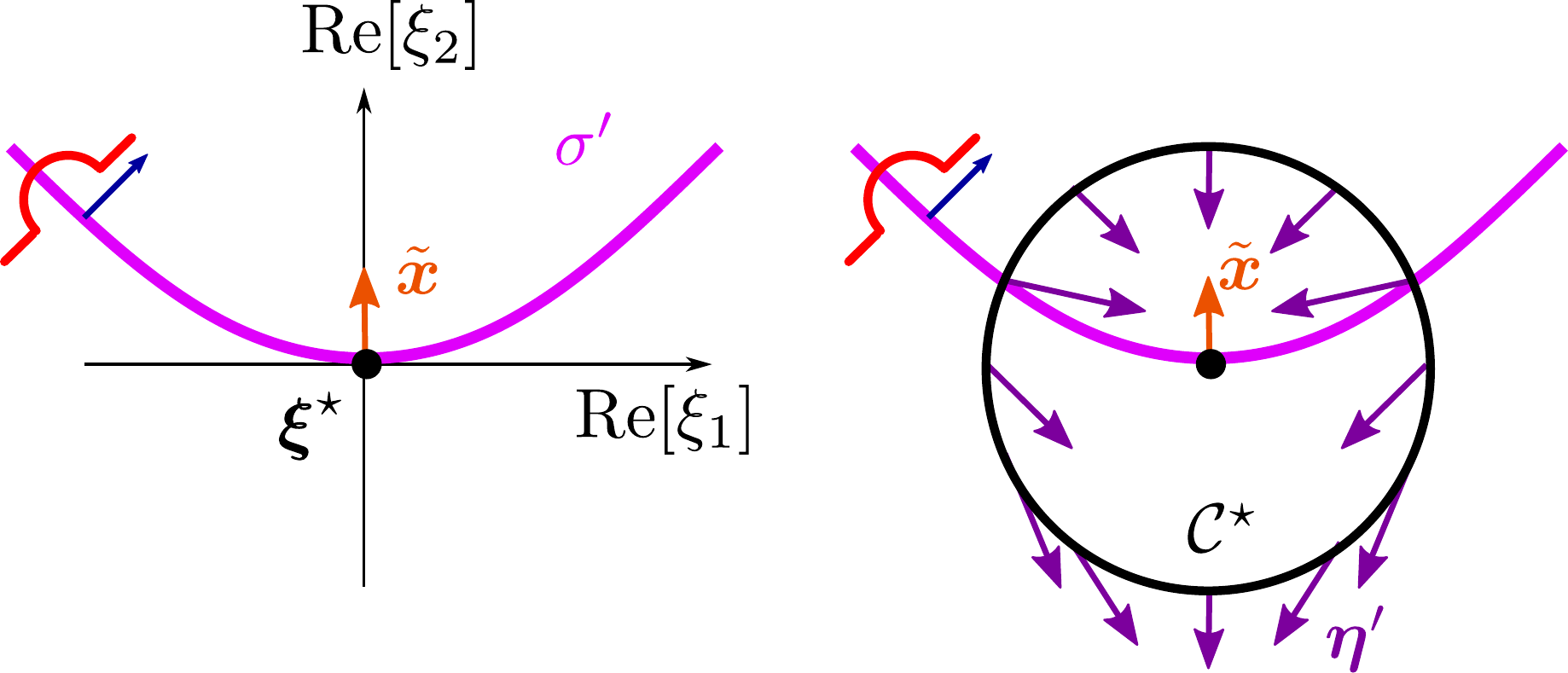}}
	\caption{
		Bridge configuration for an active \RED{SOS} (left); 
		vector $\bfeta'$ on $\mathcal{C}^\star$ near this \RED{SOS} (right)
	}
	\label{fig:set_saddle}
\end{figure}

Let us assume that the leading component of $F$ responsible for the activity of
$\tmmathbf{\xi}^{\star}$ has the following behaviour as $\bdxi\to\bdxi^\star$:
\begin{eqnarray}
	F(\bdxi) \approx A
	(g_j (\tmmathbf{\xi}))^{-\mu}, & \text{so that } & F(\bdxi(\bdzeta)) \approx A \zeta_2^{-\mu},
	\label{eq:approxFsaddle}
\end{eqnarray}
for some constants $A$ and $\mu$. Using (\ref{eq:rewritexdotxiinzeta}), (\ref{eq:approxFsaddle}) and the fact that the Jacobian tends to $((\tva^\star)^2+(\tvb^\star)^2)^{-1}$, the integral (\ref{eq:localintfirst}) is shown to behave as follows 
\begin{align}
	u_{\tmop{loc}} (\tmmathbf{x}) & \approx  \frac{Ae^{- i\tmmathbf{x} \cdot
			\tmmathbf{\xi}^\star}}{(\tva^\star)^2+(\tvb^\star)^2}
	\iint_{\tmmathbf{\Gamma}''_{\tmop{loc}}} (\zeta_2)^{-\mu}   e^{- is r^\star
		(\zeta_2 + \alpha \zeta_1^2)} \mathd
	\tmmathbf{\zeta}.
	\label{eq:saddlesecondlocint}
\end{align}

Now extend $\bdG''_{\text{loc}}$ to a surface $\bdG''$ defined over the whole of $\mathbb{R}^2$, such that the integral over $\bdG''\setminus\bdG''_{\text{loc}}$  is exponentially small. Since the exponential term in (\ref{eq:saddlesecondlocint}) is simplified compared to that of (\ref{eq:localintfirst}), this can simply be done by choosing $\bdeta''$ as in (\ref{eq:good_surf}), but over the whole of $\mathbb{R}^2$. In this case it is clear that the integrand is exponentially small everywhere outside $\mathcal{B}''$. Thus the behaviour of $u_{\text{loc}}$ is unchanged by this extension and we obtain:
\begin{align}
	u_{\tmop{loc}} (\tmmathbf{x}) & \approx  \frac{Ae^{- i\tmmathbf{x} \cdot
			\tmmathbf{\xi}^\star}}{(\tva^\star)^2+(\tvb^\star)^2}
	\iint_{\tmmathbf{\Gamma}''} (\zeta_2)^{-\mu}   e^{- is r^\star
		(\zeta_2 + \alpha \zeta_1^2)} \mathd
	\tmmathbf{\zeta}. \label{eq:saddlethirdextended}
\end{align}

It is now possible to deform $\bdG''$ to a product. Indeed, since $\bdG''$ bypasses the singularity $\sigma_j'$ from above (resp. below) in the $\bde_{\zeta_2}$ direction when $s=+1$ (resp. $s=-1$), we can deform $\bdG''$ to $\mathbb{R}\times(\mathbb{R}+is\epsilon)$ for some small $\epsilon>0$, and the integral can be taken iteratively to obtain
\begin{align}
	u_{\tmop{loc}} (\tmmathbf{x}) & \approx  \frac{Ae^{- i\tmmathbf{x} \cdot
			\tmmathbf{\xi} \star}}{(\tva^\star)^2+(\tvb^\star)^2}  \int_{\mathbb{R}}
	e^{- is r^\star \alpha \zeta_1^2} \mathd \zeta_1
	\times \int_{\mathbb{R}+ is \epsilon} (\zeta_2)^{-\mu} e^{- is r^\star \zeta_2} \mathd \zeta_2. \nonumber \\
		& \approx \frac{Ae^{- i\tmmathbf{x} \cdot
				\tmmathbf{\xi} \star}}{(\tva^\star)^2+(\tvb^\star)^2}\, \mathcal{I}\left(s r^\star \alpha\right) \times \mathcal{J}\left( \mu,r^\star;s \right).
		\label{eq:uIJsaddle}
\end{align}

The two functions $\mathcal{I}$ and $\mathcal{J}$ are defined by
\begin{eqnarray}
	\mathcal{I} (a) = \int_{- \infty}^{\infty} e^{- ia z^2}
	\mathd z & \text{ and } & \mathcal{J} (\mu, a ; s) = \int_{-
		\infty + i s\epsilon }^{\infty + i s\epsilon} z^{-\mu} e^{-i s a
			z} \mathd z,
		\label{eq:defIandJintegrals}
\end{eqnarray}
%so that
%\begin{align}
%	u_{\tmop{loc}} (\tmmathbf{x}) \approx \frac{Ae^{- i\tmmathbf{x} \cdot
%			\tmmathbf{\xi} \star}}{\text{\tmverbatim{b}}^{\star}} \mathcal{I}\left(\frac{s r \alpha}{\sqrt{\left( \text{\tmverbatim{a}}^{\star} \right)^2 + \left(
%			\text{\tmverbatim{b}}^{\star} \right)^2}}\right) \mathcal{J}\left( \mu,\frac{r}{\sqrt{\left( \text{\tmverbatim{a}}^{\star} \right)^2 + \left(
%			\text{\tmverbatim{b}}^{\star} \right)^2}};s \right).
%		\label{eq:uIJsaddle}
%\end{align}
and can be found\footnote{The $\mathcal{J}$ integral can be found using the results 6\&7 section 3.382 of \cite{GRADSHTEYN1980211}, for $\text{Re}[\mu]>0$. This restriction can actually be lifted \RED{by either analytically continuing the result, or by considering the $\mathcal{J}$ integral as the Fourier transform of a generalised function as in \cite{Jones1982-xs} for instance. Note that the seemingly problematic case of $\mu$ being a non-positive integer (for which $\mathcal{J}$ becomes zero) actually makes sense. Indeed for such values of $\mu$, $F$ is not singular at $\bdxi^\star$ and therefore $\bdxi^\star$ is not a contributing point.}} to simplify to
\begin{align}
	\mathcal{I} (a) = \left\{ \begin{array}{ccc}
		e^{- i \pi / 4} \sqrt{\pi / a} & \tmop{if} & a > 0\\
		e^{i \pi / 4} \sqrt{- \pi / a} & \tmop{if} & a < 0
	\end{array} \right. & \text{ and }  \mathcal{J} (\mu, a ; s) = \left\{ \begin{array}{ccc}
	2 \pi e^{- \frac{i s \mu \pi}{2}} a^{\mu - 1} / \Gamma (\mu) & \tmop{if} & a >
	0\\
	0 & \tmop{if} & a < 0
\end{array} \right.,
\label{eq:exactvaluesIandJ}
\end{align}
where $\Gamma$ is the Euler Gamma function, leading to a closed-form expression for the leading term of $u_\text{loc}(\bdx)$. Note that, since $r^\star>0$ and $\alpha\neq 0$, (\ref{eq:uIJsaddle}) is not zero nor exponentially decaying, so $\bdxi^\star$ is indeed contributing, as expected. For convenience, we give this contribution in terms of the original direction $\bdx$ below:
\begin{align}
	u_{\tmop{loc}} (\tmmathbf{x}) & \approx  \frac{2 \pi A e^{- i\tmmathbf{x} \cdot
			\tmmathbf{\xi}^{\star}} \sqrt{\pi}e^{- i s \mu \pi/2}}{((\tva^\star)^2+(\tvb^\star)^2) \Gamma
		(\mu)} \left( \frac{r}{\sqrt{(\tva^\star)^2 + (\tvb^\star)^2}} \right)^{\mu - 3 / 2} \times
	\left\{ \begin{array}{cc}
		e^{- i \pi / 4} / \sqrt{s \alpha} & \text{if } s \alpha > 0\\
		e^{i \pi / 4} / \sqrt{- s \alpha} & \text{if } s \alpha < 0
	\end{array} \right.
\end{align}

%Let the leading term of the integrand have the form 
%\[
%F(\tmmathbf{\xi}(\tmmathbf{\zeta})) \approx 
%A \zeta_2^\mu,   
%\]
%for some $\mu$. The leading term of the Jacobian is~1. The integral 
%(\ref{eq:saddle_changed}) becomes rewritten as follows: 
%\[
%u_{\rm loc} (\tmmathbf{x}) \approx
%A \iint_{\bdG''} \zeta_2^\mu \exp\{ - i r (\zeta_2 + a\zeta_1^2)\} {\rm d} \tmmathbf{\zeta}.    
%\]
%%The surface $\bdG''$  is defined by (\ref{eq:good_surf}) over $\mathcal{B}'$. 
%Extend $\bdG''$ to $\bdG^\dag$, \tr{which is defined by the same formulae (\ref{eq:good_surf})}, 
%but spreads over the whole 
%$\mathbb{R}^2$. 
%Note that the integral over $\bdG^\dag \setminus \bdG''$ is exponentially small, thus
%\[
%u_{\rm loc} (\tmmathbf{x}) \approx
%A \iint_{\bdG^\dag} \zeta_2^\mu \exp\{ - i r (\zeta_2 + a\zeta_1^2)\} {\rm d} \tmmathbf{\zeta}.    
%\]
%The last integral can be taken by deforming the surface $\bdG^\dag$ into, say, a product 
%$\mathbb{R} \times (\mathbb{R} + i \epsilon)$, after which the integral becomes a repeated one:   
%\begin{equation}
%u_{\rm loc} (\tmmathbf{x}) \approx
%A \frac{\Gamma(\mu + 1)}{(i r)^{\mu + 1}}
%\frac{e^{- i \pi / 4} \sqrt{\pi}}{\sqrt{ra}}
%\label{eq:as_saddle}
%\end{equation}
%
%If $a< 0$ the estimation is as follows: 
%
%{\bf AI, please insert a correct expression!}

\begin{rema}
	\label{rem:isolatedsaddle}
	We may be in a situation when the \RED{SOS} is not isolated. Think for
	example of $\sigma_j'$ being a straight line. Then if one point of $\sigma_j'$
	is an active \RED{SOS}, all points of $\sigma'_j$ are active \RED{SOS}. In this case, we cannot find a suitable ball/circle, and a whole strip around $\sigma'_j$
	should be removed, not just a ball (see \cite{Ice2021}). However, for brevity and clarity of
	exposition, we will not consider these special cases in the present article. 
\end{rema}
%\RED{
%\begin{rema}
%\label{rem:2d saddle}
%A connection between SOS and the usual two dimensional saddle point can be established. Indeed, consider the integral (\ref{eq:saddlethirdextended}). Introduce a change of variables
%\[
%\tilde \zeta_1 = \zeta_1, \quad \tilde \zeta_2 =  \zeta_2^{-\mu+1},
%\]
%Obtain (the constant term is dropped below):
%\begin{equation}
%	\iint_{\tmmathbf{\tilde \Gamma}''} e^{- is r^\star
%		(\tilde\zeta_2^{1/(-\mu+1)} + \alpha {\tilde\zeta_1}^2)} \mathd
%	\tmmathbf{\tilde \zeta},
%	\label{eq:saddle2Dint}
%\end{equation}
%where $\tmmathbf{\tilde \Gamma}''$ is some new surface of integration.
%In the special case
%\begin{equation}
%\label{eq:reg_cond}
%-\mu+1 = 1/n,
%\end{equation}
%where $n$ is some natural number, the exponent is a regular function with a 2D saddle point at $(0,0)$. Note, also, that only if $\mu = 1/2$ the saddle point is non-degenerate. Thus, if (\ref{eq:reg_cond}) is satisfied SOS can be considered equivalent to a two dimensional saddle point.      
%\end{rema}
%}

%%%%%%%%%%%%%%%%%%%%%%%%%%%%

\subsection{The case of a transverse crossing}\label{sec:awaycrossing}

\label{subsec:transversal}

Let us now consider an active transverse crossing $\tmmathbf{\xi}^{\star}$
of two singularities $\sigma_1$ and~$\sigma_2$ with defining functions $g_1$ and $g_2$. As always, we assume that these singularities have the real property and we denote their real traces by $\sigma_{1,2}'$, we define $\tva^\star_{1,2}$ and $\tvb^\star_{1,2}$ as in (\ref{eq:realpropertytransverse}). The fact that the crossing is transverse implies that the quantity
\[ \Delta^\star=\tva_1^\star \tvb_2^\star-\tva_2^\star \tvb_1^\star\]
is not zero. Let us assume without loss of generalities that $\Delta^\star>0$. Apply  the change of variables
$\psi: \, (\xi_1 , \xi_2) \to (\zeta_1 , \zeta_2)$  
defined by
\[
\zeta_1 = g_1(\xi_1 , \xi_2) ,\qquad \zeta_2 = g_2(\xi_1 , \xi_2), 
\]
The singularities in the new coordinates are therefore the lines 
$\zeta_1 = 0$ and $\zeta_2$ = 0. Moreover, we have
\[
\left( \begin{array}{c}
\xi_1 - \xi_1^\star \\
\xi_2 - \xi_2^\star
\end{array} \right) = \Psi \, 
\left( \begin{array}{c}
\zeta_1  \\
\zeta_2 
\end{array} \right) + \mathcal{O}(\zeta_1^2 + \zeta_1 \zeta_2 + \zeta_2^2), \quad \text{where} \quad \Psi=\frac{1}{\Delta^{\star}} \left( \begin{array}{cc}
\tvb_2^\star & -\tvb_1^\star\\
-\tva_2^\star & \tva_1^\star
\end{array} \right).
\]
As before, we note that this change of variable naturally defines a real change of variable, and that the associated Jacobian is $1/\Delta^\star$ at $\bdxi^\star$. We can also show that, at $\bdxi^\star$, the unit vectors $\bde_{\zeta_1}$ and $\bde_{\zeta_2}$ are given by $\bde_{\zeta_1}=-\boldsymbol{t}_2^\star$ and $\bde_{\zeta_2}=+\boldsymbol{t}_1^\star$, where the tangent vectors $\boldsymbol{t}^\star_{1,2}$ are defined as in (\ref{eq:ntvect}) by $\boldsymbol{t}^\star_{1,2}=(-\tvb^\star_{1,2},\tva^\star_{1,2})^\transpose/\sqrt{(\tva^\star_{1,2})^2+(\tvb^\star_{1,2})^2}$. As a consequence, we can show that
\begin{align}
\bde_{\zeta_1}\cdot \bdn_1^\star=\Delta^\star>0 \quad \text{ and } \quad \bde_{\zeta_2} \cdot \bdn_2^\star=\Delta^\star>0,
\label{eq:dotezetan12}
\end{align}
where the normal unit vectors $\bdn^\star_{1,2}$ are defined as in (\ref{eq:ntvect}) by $\boldsymbol{n}^\star_{1,2}=(\tva^\star_{1,2},\tvb^\star_{1,2})^\transpose/\sqrt{(\tva^\star_{1,2})^2+(\tvb^\star_{1,2})^2}$.

In order to consider all possible bridge configurations at once, let us introduce two parameters $s_1$ and $s_2$ defined such that:
\begin{itemize}
	\item If $s_1=+1$ (resp. $s_1=-1$) then $\bdG$ bypasses $\sigma_1'$ from above (resp. below) at $\bdxi^\star$ in the direction $\bdn_1^\star$ and also, because of (\ref{eq:dotezetan12}), in the direction $\bde_{\zeta_1}$.
	\item If $s_2=+1$ (resp. $s_2=-1$) then $\bdG$ bypasses $\sigma_2'$ from above (resp. below) at $\bdxi^\star$ in the direction $\bdn_2^\star$ and also, because of (\ref{eq:dotezetan12}), in the direction $\bde_{\zeta_2}$.
\end{itemize}

Let us consider a given bridge configuration given by a pair $(s_1,s_2)$, and an observation direction $\tilde{\bdx}$ that points to the associated active quadrant. Let us also introduce the vector $\bdx'=(x_1',x_2')^\transpose$ defined by $(\bdx')^\transpose=(\tilde{\bdx})^\transpose \Psi$ so that $\tilde{\bdx}\cdot\Psi\bdzeta=\bdx'\cdot\bdzeta$.
The integrand's exponential factor can hence be written in the new coordinates as follows 
\begin{equation}
	e^{ - i r \tmmathbf{\tilde x} \cdot \bfxi(\tmmathbf{\zeta}) }
	= e^{-i\bdx\cdot\bdxi^\star}
	e^{ - i r (x_1' \zeta_1 + x_2' \zeta_2 + \mathcal{O}(\zeta_1^2 + \zeta_1 \zeta_2 + \zeta_2^2)) },
	\label{eq:exp2}
\end{equation}
where
\begin{equation}
	x_1'=\tfrac{1}{\Delta^\star} (\tilde{x}_1 \tvb_2^\star-\tilde{x}_2 \tva_2^\star) \quad \text{ and } \quad x_2'=\tfrac{1}{\Delta^\star} (-\tilde{x}_1 \tvb_1^\star+\tilde{x}_2 \tva_1^\star).
\end{equation}
Since $\tilde{\bdx}$ points to the active quadrant in the $\bdxi$ real plane, $\bdx'$ points to the active quadrant in the $\bdzeta$ real plane. In other words, we have $\text{sign}(x_i')=s_i$ or, equivalently, $|x_i'|=s_i x_i'$.
An illustration in the real $(\zeta_1,\zeta_2)$ plane of the bridge configuration corresponding to $s_1=+1$ and $s_2=+1$,  together  with an active direction $\bdx'$ is shown in Figure~\ref{fig:set_trans} (left).
%The integrand exponential factor can be written in the new coordinates 
%is as follows 
%\begin{equation}
%\exp \{ - i r \tmmathbf{\tilde x} \cdot \bfxi(\tmmathbf{\zeta}) \}
%= 
%\exp \{ - i r (x_1' \zeta_1 + x_2' \zeta_2 + O(\zeta_1^2 + \zeta_1 \zeta_2 + \zeta_2^2)) \},
%\label{eq:exp2}
%\end{equation}
%where $(x_1' , x_2') =   (\tilde x_1 , \tilde x_2) \Psi$.
%Let the vector $\tmmathbf{x}'$ point to the first quadrant. To make the 
%crossing point active, we have to define the bridge symbols 
%as it is shown in Figure~\ref{fig:set_trans}, left. 
\begin{figure}[h]
  \centering{\includegraphics[width=0.6\textwidth]{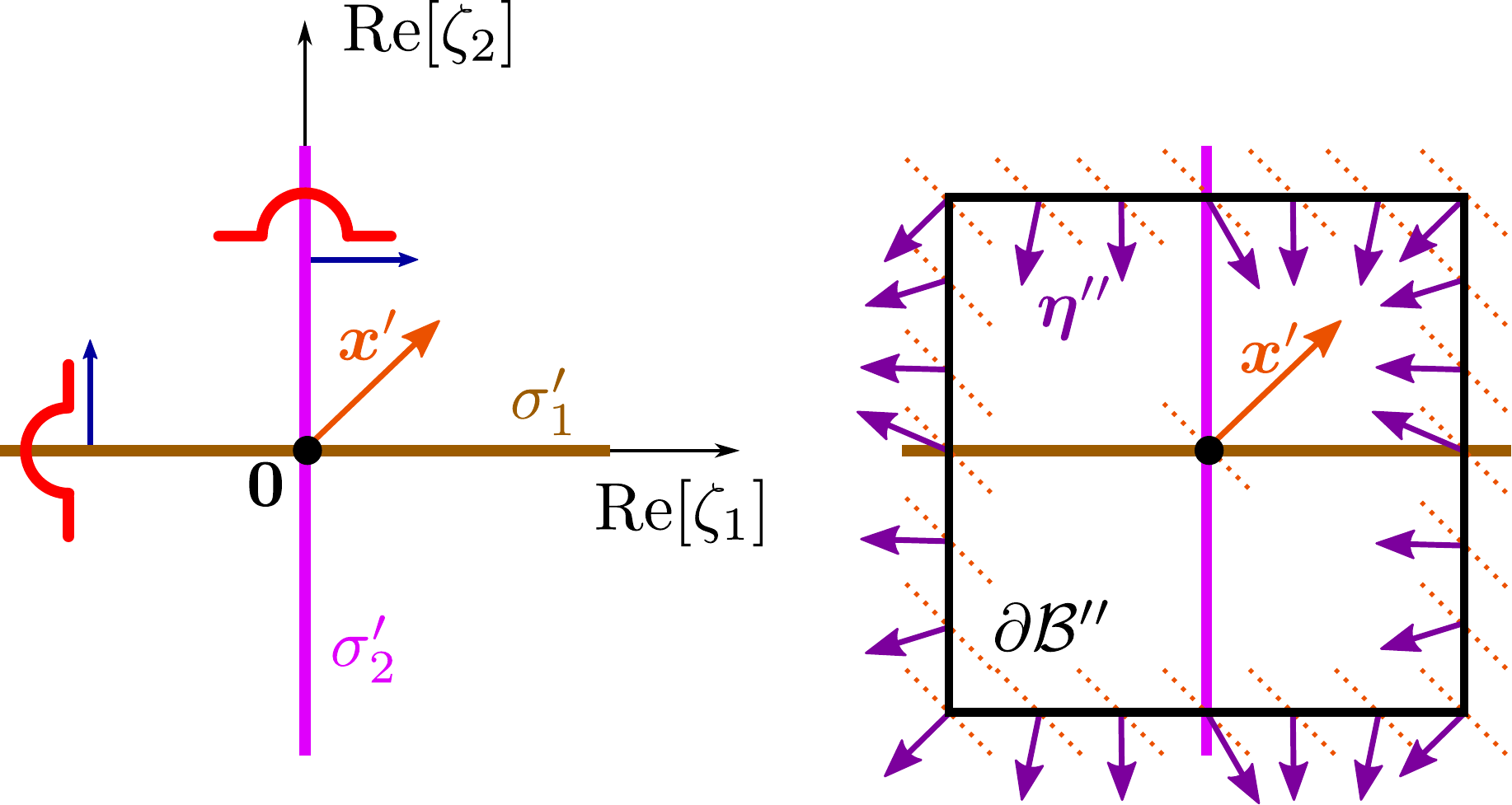}}
  \caption{
  a transverse crossing (left); 
  vector $\bfeta''$ on the boundary of $\mathcal{B}''$ (right)
  }
\label{fig:set_trans}
\end{figure}

We want to construct a suitable surface over a real neighbourhood $\mathcal{B}^\star$ of $\bdxi^\star$. In order to do this explicitly, we will work in the $\bdzeta$ real plane and consider a neighbourhood of $\boldsymbol{0}$, $\mathcal{B}''=\psi(\mathcal{B}^\star)$, that we will define explicitly and on which we will build an explicit vector field $\bdeta''(\bdzeta^r)$ defining a suitable surface $\bdG''_{\text{loc}}$ guaranteeing an exponential decay over $\partial \mathcal{B}''$ (here, again, $\tmmathbf{\zeta}^r$ stands for the real part of $\tmmathbf{\zeta}$).

Define $\mathcal{B}''$ as the square $|\zeta^r_1| < \rho$, $|\zeta^r_2| < \rho$, for some small enough $\rho$ to ignore the $\mathcal{O}$-terms in (\ref{eq:exp2}), and for some small $\beta>0$, define the vector field $\bdeta''$ over $\mathcal{B}''$ by
\begin{equation}
	\eta''_1 =  \beta\left(s_1\rho-2\frac{|x'_1|+|x'_2|}{x'_1}|\zeta^r_1|\right), \quad \eta''_2 = 
		 \beta\left(s_2\rho-2\frac{|x'_1|+|x'_2|}{x'_2}|\zeta^r_2|\right).
	\label{eq:good_surf_1}
\end{equation}
It can be shown that for $\bdzeta\in\bdG''_\text{loc}$, we have
\begin{align}
	\text{Im}[\bdx'\cdot\bdzeta]=\bdx'\cdot\bdeta''=\beta(|x_1'|+|x_2'|)(\rho-2(|\zeta_1^r|+|\zeta_2^r|)),
\end{align}
so that, over $\partial\mathcal{B}''$, $\bdx'\cdot\bdeta''=-\beta \rho (|x'_1|+|x'_2|)<0$, thus the integrand is exponentially decaying there as expected.
Remembering that the Jacobian is approximately $1/\Delta^\star$, using (\ref{eq:exp2}) and introducing $\bdG'_\text{loc}=\psi^{-1}(\bdG''_\text{loc})$, we have that
\begin{align}
	u_{\text{loc}}(\bdx) &= \iint_{\bdG'_\text{loc}} F(\bdxi) e^{-i \bdx \cdot \bdxi} \, \mathd \bdxi
	\approx \frac{e^{-i \bdx \cdot \bdxi^\star}}{\Delta^\star} \iint_{\bdG''_\text{loc}} F(\bdxi(\bdzeta)) e^{-i r \bdx' \cdot \bdzeta}\, \mathd \bdzeta.
	\label{eq:ulocfirststeptransverse}
\end{align}
 Let us now assume that the leading component of $F$ responsible for the activity of
 $\tmmathbf{\xi}^{\star}$ has the following behaviour as $\bdxi\to\bdxi^\star$:
 \begin{eqnarray}
 	F(\bdxi) \approx A
 	(g_1 (\tmmathbf{\xi}))^{-\mu_1}(g_2 (\tmmathbf{\xi}))^{-\mu_2}, & \text{so that } & F(\bdxi(\bdzeta)) \approx A \zeta_1^{-\mu_1}\zeta_2^{-\mu_2},
 	\label{eq:approxFtransverse}
 \end{eqnarray}
 for some constants $A$ and $\mu_{1,2}$. In this case, using (\ref{eq:ulocfirststeptransverse}), we get
 \begin{align}
 	u_{\text{loc}}(\bdx) &\approx A \frac{e^{-i \bdx \cdot \bdxi^\star}}{\Delta^\star} \iint_{\bdG''_\text{loc}} \zeta_1^{-\mu_1} \zeta_2^{-\mu_2}  e^{-i r(x'_1 \zeta_1+x_2' \zeta_2)}\, \mathd \bdzeta.
 	\label{eq:ulocsecondsteptransverse}
 \end{align}
As in the previous section, using the vector field $\bdeta''$ defined by $(\ref{eq:good_surf_1})$ we can now extend the surface $\bdG''_\text{loc}$ to a surface
defined over the whole of $\mathbb{R}^2$, such that the integrand of (\ref{eq:ulocsecondsteptransverse}) is exponentially small on $\bdG'' \setminus \bdG''_\text{loc}$. Therefore this extension does not affect the behaviour of $u_\text{loc}$, and we have
 \begin{align}
u_{\text{loc}}(\bdx) &\approx A \frac{e^{-i \bdx \cdot \bdxi^\star}}{\Delta^\star} \iint_{\bdG''} \zeta_1^{-\mu_1} \zeta_2^{-\mu_2}  e^{-i r(x'_1 \zeta_1+x_2' \zeta_2)}\, \mathd \bdzeta.
\label{eq:ulocthirdsteptransverse}
 \end{align}
It is now possible to deform $\bdG''$ to the product $(\mathbb{R}+is_1\epsilon)\times(\mathbb{R}+i s_2 \epsilon)$ without crossing any singularities and conserving the correct bridge configuration to get
\begin{align}
\label{eq:cross_est}
	u_{\text{loc}}(\bdx) &\approx A \frac{e^{-i \bdx \cdot \bdxi^\star}}{\Delta^\star} \, \mathcal{J}(\mu_1,r|x_1'|;s_1)\times\mathcal{J}(\mu_2,r|x_2'|;s_2),
\end{align}
where $\mathcal{J}$ is defined in (\ref{eq:defIandJintegrals}). Therefore, using (\ref{eq:exactvaluesIandJ}), we obtain an explicit, closed-form approximation for $u_\text{loc}(\bdx)$ when $\bdx$ corresponds to an active transverse crossing. Given the fact that $r|x_i'|>0$, this is always non-zero and non exponentially decaying. We can hence conclude that this active transverse crossing is indeed contributing. For convenience, we give this contribution in terms of the original direction $\bdx$ below:
\begin{equation}
	u_\text{loc}(\bdx)\approx\frac{4\pi^2Ae^{-i\bdx\cdot\bdxi^\star}e^{-i\frac{\pi}{2}(s_1\mu_1+s_2\mu_2)}}{\Gamma(\mu_1)\Gamma(\mu_2) (\Delta^\star)^{\mu_1+\mu_2-1}} |x_1 \tvb_2^\star-x_2 \tva_2^\star|^{\mu_1-1}  |-x_1 \tvb_1^\star+x_2 \tva_1^\star|^{\mu_2-1}. 
\end{equation}
Remembering that we have made the assumption $\Delta^\star>0$, this contribution only occurs for directions $\bdx$ rendering the point active, that is, whenever we have
\begin{align}
	\text{sign}(\tilde{x}_1 \tvb_2^\star-\tilde{x}_2 \tva_2^\star)=s_1 \quad \text{ and } \quad \text{sign}(-\tilde{x}_1 \tvb_1^\star+\tilde{x}_2 \tva_1^\star)=s_2.
\end{align}
%\RED{
%\begin{rema}
%An observation similar to Remark~(\ref{rem:isolatedsaddle}) can be made on transversal crossing. Indeed, let us introduce the following change of variables in the integral (\ref{eq:ulocthirdsteptransverse}): 
% \[
%\tilde \zeta_1 = \zeta_1^{-\mu+1}, \quad \tilde \zeta_2 =  \zeta_2^{-\mu+1}.
%\]
%The integral will be reduced to 
%\begin{equation}
%\iint_{\bdG''} e^{-i r(x'_1 \tilde\zeta_1^{1/(-\mu_1 + 1)}+x_2' \tilde\zeta_2^{1/(-\mu_2 + 1)})}\, \mathd \bdzeta.
%\label{eq:ulocthirdsteptransverse}
%\end{equation}
%Then, if 
%\begin{equation}
%\label{eq:reg_cond_crossing}
%-\mu_1+1 = 1/n,\quad -\mu_2+1 = 1/m,
%\end{equation}
%for some natural $m$ and $n$ we obtain  a usual saddle point integral. Again, only if $\mu_1 =\mu_2 = 1/2$ the saddle point is non-degenerate. 
%\end{rema}
%}
\RED{
\subsection{Connections with the two-dimensional saddle point method}
\label{subsec:2d saddle}
As explained in introduction, substantial research has been carried out regarding the estimation of  multiple integrals. Most of these contributions are focussed on the multidimensional saddle point method (see e.g.\ \cite{Borovikov1994, Wong2001-cd, Jones1982-xs, Bleistein1987-xa, Felsen1994-hb,Lighthill78,Jones1958-nf}). Let us summarise it briefly. Let the functions $F(\bdxi)$ and $G(\bdxi)$ in the integral
\[
I(\lambda)=\int_D F({\tmmathbf{\xi}}) e^{\lambda G({\tmmathbf{\xi}})}d\tmmathbf{\xi}
\]
be infinitely differentiable within and on the integration domain $D$. Assume further that $G(\bdxi)$  has a saddle point ${\tmmathbf{\xi}}_0$ (i.e.\ we have $\nabla G(\bdxi_0)=\boldsymbol{0}$) with a non-degenerate Hessian matrix $\rm H$ at $\bdxi_0$.
%\[
%\left.
%\frac{\ptl G({\tmmathbf{\xi}})}{\ptl \tmmathbf{\xi}}
%\right|_{{\tmmathbf{\xi}}={\tmmathbf{\xi}_0}} =0,
%\]
%and let the matrix of second derivatives 
%\[
%{\rm A} = \begin{pmatrix}
%\frac{\ptl^2 G({\tmmathbf{\xi}})}{\ptl \xi_1^2} & \frac{\ptl^2 G({\tmmathbf{\xi}})}{\ptl \xi_1 \ptl \xi_2}\\
%\frac{\ptl^2 G({\tmmathbf{\xi}})}{\ptl \xi_2 \ptl \xi_1}& \frac{\ptl^2 G({\tmmathbf{\xi}})}{\ptl \xi_2^2} 
%\end{pmatrix}
%\] 
%be nondegenerate. 
Then, the contribution from the saddle point as $\lambda\to\infty$ can be estimated as follows:
\begin{equation}
\label{eq:saddle_estimation}
I(\lambda) \approx a_0 \lambda^{-1},\quad \text{where} \quad a_0 = \frac{2\pi F({\tmmathbf{\xi}_0})e^{i\pi\delta/2}}{\sqrt{|\det({\rm H})|}},
\end{equation} 
and $\delta=1$ if both eigenvalues of the matrix ${\rm H}$ are positive, $\delta=0$ if they have opposite signs, and $\delta=-1$ if they are both negative.

A connection between SOS, transverse crossings and two-dimensional saddle points can be established. First, let us notice that SOS can be viewed as a particular case of a transverse crossing. 
Indeed, consider an integral similar to (\ref{eq:ulocthirdsteptransverse}) that typically occurs for transverse crossings
\begin{equation}
\label{eq:cross_int}
{\mathbb{I}}(\mu_1,\mu_2)=\iint_{\bdG} \zeta_1^{-\mu_1} \zeta_2^{-\mu_2}  e^{-i r(x'_1 \zeta_1+x_2' \zeta_2)}\, \mathd \bdzeta,
\end{equation}
where $\bdG$ is a surface of integration close to the real plane. For the sake of argument, assume that $\mu_1=1/2$ and use the change of variables $\tilde{\bdzeta}\leftrightarrow \bdzeta$ defined by
\[
\tilde\zeta_1 =  \zeta_1^{1/2},\quad \tilde\zeta_2 = \zeta_2, 
\]
to obtain
\begin{equation}
\label{eq:saddle_int}
{\mathbb{I}}(1/2,\mu_2)=2\iint_{\tilde \bdG} \tilde{\zeta}_2^{-\mu_2}  e^{-i r(x'_1 {\tilde\zeta}^2_1+x_2' {\tilde\zeta}_2)}\, \mathd \tilde{\bdzeta}.
\end{equation}
The integral (\ref{eq:saddle_int}) is now typically similar to the SOS integral  (\ref{eq:saddlethirdextended}). Second, let us now show that a multidimensional saddle integral can be recovered from a SOS. Assume that $\mu_2=1/2$ in (\ref{eq:saddle_int}) and use the change of variables $\hat{\bdzeta} \leftrightarrow \tilde{\bdzeta}$ defined by
\[
\hat\zeta_1 =  \tilde{\zeta}_1,\quad \hat\zeta_2 = \tilde{\zeta}_2^{1/2}, 
\]
to obtain
\begin{equation}
	\label{eq:raphraph2dsaddleraph}
{\mathbb{I}}(1/2,1/2)=4\iint_{\hat\bdG} e^{-i r(x'_1 {\hat\zeta}^2_1+x_2' {\hat\zeta}^2_2)}\, \mathd \hat\bdzeta.
\end{equation} 
The integral (\ref{eq:raphraph2dsaddleraph}) is now a two-dimensional saddle point integral with the saddle point at the origin. It can be estimated using (\ref{eq:saddle_estimation}). Of course, the result is consistent with (\ref{eq:cross_est}) and (\ref{eq:uIJsaddle}).  

To be more general, assuming that $\mu_{1,2}\neq1$ let us introduce the following change of variables  in ${\mathbb{I}}(\mu_1,\mu_2)$:
 \[
\tilde \zeta_1 = \zeta_1^{1-\mu_1}, \quad \tilde \zeta_2 =  \zeta_2^{1-\mu_2}.
\]
The integral (\ref{eq:cross_int})  reduces to 
\begin{equation}
{\mathbb{I}}(\mu_1,\mu_2)=\frac{1}{(1-\mu_1)(1-\mu_2)}\iint_{\tilde\bdG} e^{-i r(x'_1 \tilde\zeta_1^{1/(1-\mu_1)}+x_2' \tilde\zeta_2^{1/(1-\mu_2)})}\, \mathd \tilde\bdzeta.
\label{eq:saddle_crossing}
\end{equation}
Then, rewriting 
\begin{equation}
\label{eq:reg_cond_crossing}
1-\mu_1 = 1/n,\quad 1-\mu_2 = 1/m,
\end{equation}
for some $m$ and $n$, we obtain a saddle point integral. The saddle point at the origin is non-degenerate (i.e.\ the associated Hessian matrix is non-degenerate) if and only if $n = m = 2$. Otherwise, we get a saddle point of a higher order. Following the reasoning above, one can consider transverse crossings as ``topological phenomena'' that contain both SOS and two-dimensional saddles as particular cases.  
}

\section{The surface $\bold{\Gamma}$ away from the active points and the locality principle}
\label{sec:integrationawayfromactive}

In this section, we are going to present a sketch of a proof of the main statement 
of the paper. This is the {\em locality principle:}

\begin{theorem}
\label{th:locality}
The integral $u(r \tmmathbf{\tilde x})$ can be estimated 
as $r \to \infty$ as an asymptotic series.
For almost all $\tmmathbf{\tilde x}$,
the terms
of the asymptotic expansion (up to the terms decaying exponentially
as $r \to \infty$) can be obtained by estimating the 
integral in neighbourhoods of contributing points.   
\end{theorem}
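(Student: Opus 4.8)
The plan is to realise the second deformation $\bdG \to \bdG'$ announced earlier by gluing the local surfaces built in Section~\ref{sec:nearactive} around the contributing points to a global surface, constructed away from all active points, on which the integrand is exponentially small; the locality principle then drops out by splitting the integral over $\bdG'$ and invoking the multidimensional Cauchy theorem together with Proposition~\ref{pr:bridge_invariant}. First I would restrict $\tilde{\bdx}$ to lie outside a closed set $E$ of zero measure: the directions perpendicular to a straight-line component $\sigma_j'$ (which would create a non-isolated SOS, cf.\ Remark~\ref{rem:isolatedsaddle}); the finitely many directions making some tangential crossing active (Theorem~\ref{th:th20tangentperp}); the directions tangent to some $\sigma_j'$ at a transverse crossing; and the directions for which two special points coincide or escape to infinity along a component $\sigma_j'$. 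For $\tilde{\bdx}\notin E$, the catalogue of Section~\ref{sec:sec4} shows that the active set is discrete: non-singular points are inactive; a point on a single component $\sigma_j'$ is active only if it is an SOS (Proposition~\ref{pr:non_orthogonal}), hence isolated; transverse crossings are isolated and active only towards their active quadrant (Theorem~\ref{th:crossing}); tangential crossings do not contribute (Theorem~\ref{th:th20tangentperp}); and additive transverse crossings, though possibly active, do not contribute (Theorem~\ref{th:additivedoesnotcontribute}). Assuming, as in the situations of interest, that this discrete set has no accumulation at infinity, it is finite; write $\bdxi^\star_1,\dots,\bdxi^\star_L$ for the contributing points and $\bdxi^\times_1,\dots,\bdxi^\times_M$ for the active additive crossings.

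\textbf{Excised neighbourhoods of the special points.} Around each $\bdxi^\star_\ell$ I would take a small ball $\mathcal{B}_\ell$ (the $\mathcal{B}_\ell$ pairwise disjoint and disjoint from the $\bdxi^\times_m$) and the flattable surface $\bdG'_{\mathrm{loc},\ell}$ of Section~\ref{sec:nearactive}: it has the bridge configuration of $\bdG$, avoids $\sigma$, and its defining field $\bdeta'$ satisfies $\bdeta'\cdot\tilde{\bdx}<0$ on $\partial\mathcal{B}_\ell$, while $\iint_{\bdG'_{\mathrm{loc},\ell}} F\,e^{-i\bdx\cdot\bdxi}\,\mathd\bdxi$ equals the explicit quantity $u_{\mathrm{loc},\ell}(\bdx)$ computed there, whose $r\to\infty$ expansion is given in closed form and is not exponentially small. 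Around each $\bdxi^\times_m$ I would take a small ball $\mathcal{B}^\times_m$, chosen so small that on a surrounding annulus $\tilde{\bdx}$ is perpendicular to neither $\sigma_1'$ nor $\sigma_2'$, so that $\bdG$ can be flattened there to satisfy $\bdeta\cdot\tilde{\bdx}<0$ (Proposition~\ref{pr:non_orthogonal}); inside $\mathcal{B}^\times_m$ the additive splitting $F=F_1+F_2$ of Theorem~\ref{th:additivedoesnotcontribute} will be applied, the surfaces of the $F_1$- and $F_2$-parts being deformed separately (relative to $\partial\mathcal{B}^\times_m$).

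\textbf{The global inactivating surface and conclusion.} On $K := \mathbb{R}^2\setminus\big(\bigcup_\ell\mathcal{B}_\ell\cup\bigcup_m\mathcal{B}^\times_m\big)$ every point is inactive, so by Definition~\ref{def:acitveandinactive} and the constructions of Section~\ref{sec:sec4} each $\bdxi^r\in K$ admits a local field that is small, defines a flattable surface not meeting $\sigma$, points to the same side of each $\sigma_j'$ through $\bdxi^r$ as $\bdeta$ (hence preserves the bridge configuration), and satisfies $\bdeta'\cdot\tilde{\bdx}<0$. At each point the set of such admissible fields is an intersection of a neighbourhood of $\boldsymbol{0}$ with finitely many open half-spaces, hence convex and non-empty; a partition of unity subordinate to a locally finite cover of $K$, matched near infinity to an essentially constant inactivating field and along each $\partial\mathcal{B}_\ell$ and $\partial\mathcal{B}^\times_m$ to the data prescribed above, therefore produces a single continuous $\bdeta'$ defining a flattable surface $\bdG'_K$ with the bridge configuration of $\bdG$. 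Gluing, $\bdG':=\bdG'_K\cup\bigcup_\ell\bdG'_{\mathrm{loc},\ell}\cup\bigcup_m(\bdG\cap\mathcal{B}^\times_m)$ is piecewise smooth and homotopic to $\bdG$ in $\mathbb{C}^2\setminus\sigma$ with the same bridge configuration, so $u(\bdx)=\iint_{\bdG'}F\,e^{-i\bdx\cdot\bdxi}\,\mathd\bdxi$. Splitting this integral: over $\bdG'_K$ one has $|e^{-i\bdx\cdot\bdxi}|=e^{r\,\tilde{\bdx}\cdot\bdeta'}\le e^{-cr}$ on the compact part, and on the unbounded part the surviving oscillation in $\bdxi^r$ combined with the at-most-algebraic growth of $F$ and the factor $e^{-\epsilon r}$ again gives an exponentially small term; each $\iint_{\bdG\cap\mathcal{B}^\times_m}$ is exponentially small by Theorem~\ref{th:additivedoesnotcontribute}; and each $\iint_{\bdG'_{\mathrm{loc},\ell}}$ is $u_{\mathrm{loc},\ell}(\bdx)$, with the expansion of Section~\ref{sec:nearactive}. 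Hence $u(r\tilde{\bdx})=\sum_{\ell=1}^{L}u_{\mathrm{loc},\ell}(r\tilde{\bdx})+O(e^{-c'r})$, and every non-exponentially-decaying term of the asymptotic series comes from a neighbourhood of a contributing point, which is exactly the locality principle.

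\textbf{Main obstacle.} The delicate step is the global construction: promoting the \emph{pointwise} existence of inactivating local deformations to a \emph{single} flattable surface that simultaneously preserves the bridge configuration everywhere, matches the prescribed boundary data on all excised neighbourhoods, is consistently glued across $\partial\mathcal{B}^\times_m$, and is controlled at infinity --- i.e.\ the partition-of-unity argument together with the non-compactness of $K$ and the careful handling of the exceptional set $E$. This is why the statement is claimed only for \emph{almost all} $\tilde{\bdx}$ and why the argument is presented as a sketch.
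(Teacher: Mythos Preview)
Your proposal is correct and follows the same overall strategy as the paper's sketch: excise neighbourhoods of the contributing points, build a flattable surface over the complement on which the integrand decays exponentially, glue the pieces, and invoke the multidimensional Cauchy theorem. The key construction step, however, is handled differently.

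The paper proceeds via a \emph{skeleton} argument: it first defines $\bfeta'$ on the one-dimensional set $\mathcal{G}=(\sigma'\setminus\cup_j\mathcal{B}_j^\star)\cup(\cup_j\mathcal{C}_j^\star)$ (the real traces outside the excised balls, together with the boundaries of those balls), and then fills in each two-dimensional region cut out by $\mathcal{G}$ by linear interpolation along a family of coordinate segments. Since the inequality $\bfeta'\cdot\tilde{\bdx}<0$ is preserved under convex combination, it propagates from the boundary of each region to the interior. You instead use a partition-of-unity argument based on the convexity, at each point, of the set of admissible small fields.

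Both routes exploit the same convexity and are legitimate sketches. The paper's is more constructive: it produces an explicit $\bfeta'$ that can be (and is, in Section~\ref{sec:simplenontrivialexamples}) implemented numerically. Your partition-of-unity route is more standard and arguably cleaner as an existence proof, and you treat two issues more carefully than the paper's Section~\ref{sec:integrationawayfromactive} does: the behaviour near active \emph{additive} crossings (which are not contributing but at which a single common surface deformation need not exist, so your device of keeping $\bdG$ there and invoking Theorem~\ref{th:additivedoesnotcontribute} separately is needed), and the identification of the null set $E$ of exceptional directions justifying the ``almost all $\tilde{\bdx}$'' clause.
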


%We make a remark that active transversal crossings at which the integrand 
%possess the additive crossing property should be excluded from the consideration.   

As seen in the previous two sections, for a given direction $\tilde{\tmmathbf{x}}$,
we can decide which points of $\mathbb{R}^2$ are active or not, and which of the active points are actually contributing points. In this
section we wish to show that we can construct a surface of integration
$\bdG'$ away from the vicinity of those contributing points such that on
this part of $\bdG'$, the integrand of
(\ref{eq:FourierIntegralIndented}) is exponentially decaying as $r = |
\tmmathbf{x} | \rightarrow \infty$ and does not contributes to the asymptotic
behaviour of $u (\tmmathbf{x})$.

Let us hence consider the contributing points. As discussed in Remarks
\ref{rem:notriple} and \ref{rem:isolatedsaddle}, we will only consider two sorts: transverse non-additive crossings of
two real traces or isolated \RED{SOS}. These contributing points are assumed
to be separated from each other, that is, there exists a small ball (in
$\mathbb{R}^2$) around each of these points such that:
\begin{itemize}
  \item there is only one contributing point inside the ball
  
  \item the only real traces present within the ball are those responsible for
  the active nature of the contributing point
\end{itemize}

%We will start by constructing $\bdG'$ on the boundary of these
%balls for saddles (Section \ref{sec:awaysaddle}) and transversal crossings
%(Section \ref{sec:awaycrossing}) before constructing it everywhere away from
%the vicinity of the active points (Section \ref{sec:everywhereaway}).

Below we provide a sketch of a proof. One can find a rigorous formulation 
of the locality principle and a detailed proof, but made for a rather special case, 
in \cite{Ice2021}. 

\textbf{Step 1. }Assume that a \RED{function} $F$ with singularity set $\sigma$  has $N$ such contributing points denoted by $\bdxi_j^\star$, $j=1,...,N$, and exclude some small neighbourhoods $\mathcal{B}_j^\star$ of each contributing point from $\mathbb{R}^2$. These neighbourhoods have boundary $\mathcal{C}_j^\star=\partial \mathcal{B}_j^\star$. 
An example of this process is shown in Figure~\ref{fig:planewithholes_1}. In this example,  the singularities $\sigma_1'$ and $\sigma_2'$ in the figure are the line and the 
circle, and there are two contributing points (a \RED{SOS} $\bdxi_1^\star$ and a transverse crossing $\bdxi_2^\star$). 

\textbf{Step 2. } Introduce the set of curves 
\[
\mathcal{G} = 
(\sigma' \setminus (\cup_j \mathcal{B}_j^\star)) \cup (\cup_j \mathcal{C}^\star_j).
\]
Define the vector $\bfeta'$ on $\mathcal{G}$, such that:

\begin{itemize}

\item
$\bfeta'$ is piecewise smooth on $\mathcal{G}$.

\item
$\bfeta' \cdot \tmmathbf{\tilde x} < 0$. 
\end{itemize}  
This is possible due to two circumstances: 1) all points of 
$\sigma' \setminus (\cup_j \mathcal{B}_j^\star)$ are inactive, thus in each 
small neighbourhood of each point of this set such a choice is possible; 
2) such vector field on each boundary $\mathcal{C}^\star_j$ is constructed 
explicitly in Sections~\ref{subsec:saddle} and~\ref{subsec:transversal}.
Some subtle reasoning is needed to prove that such a choice can be made over the whole 
$\mathcal{G}$, and the result can be made smooth. For brevity, we do not go into such details here. 

{\textbf{Step 3. } The set of lines $\mathcal{G}$ divides $\mathbb{R}^2 \setminus (\cup_j \mathcal{B}^\star_j)$
into several regions, see Figure \ref{fig:planewithholes_1} (right). A vector field $\bfeta'$ obeying the declared property 
is defined on the boundary of each region. 
If we manage to continue $\bfeta'$ piecewise-smoothly into each region from its boundary, then the field $\bfeta'$
will be defined on the whole of $\mathbb{R}^2 \setminus (\cup_j \mathcal{B}^\star_j)$. This field would then define 
a surface $\bdG'$ on which the integrand is exponentially small. Adding the ``patches'' built for the domains $\mathcal{B}^\star_j$ in Sections~\ref{subsec:saddle} and~\ref{subsec:transversal}, on which the integrand is not exponentially small, finishes the proof of the locality principle. 

\begin{figure}[h]
	\centering{\includegraphics[width=0.8\textwidth]{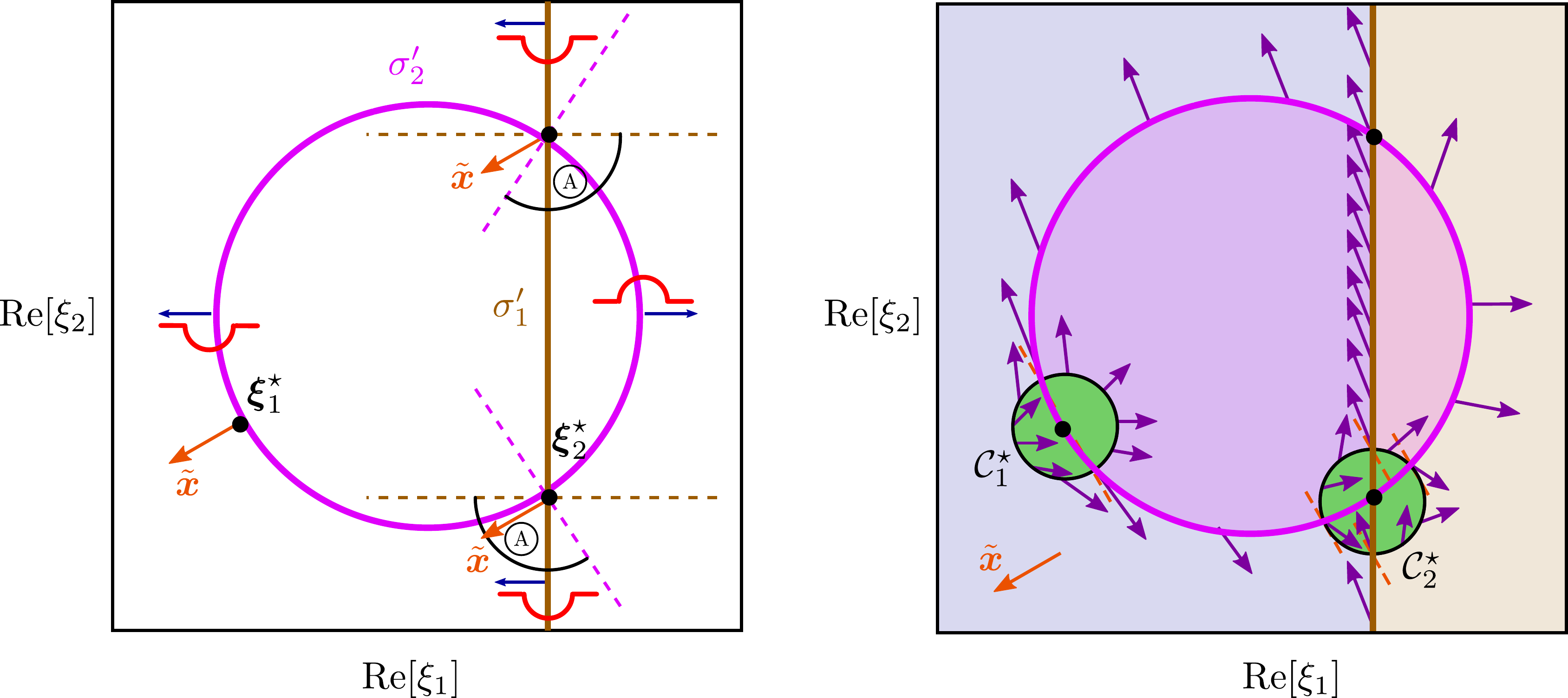}}
	\caption{ Illustration of the proof of Theorem \ref{th:locality} with the bridge configuration and the region of activity of the potentially contributing points (left) and the region decomposition defined by $\mathcal{G}$ together with the vector field $\bdeta'$ on $\mathcal{G}$ (right).}
	\label{fig:planewithholes_1}
\end{figure}

The continuation of $\bfeta'$ onto a region of $\mathbb{R}^2 \setminus (\cup_j \mathcal{B}^\star_j)$ from its boundary is reasonably straightforward. For example, one could introduce some coordinates $(\alpha_1 , \alpha_2)$ on a region,
such that the coordinate transformation is smooth, and the intersections of the coordinate lines 
$\alpha_1 = {\rm const}$
with the region of interest are points or segments, but not semi-infinite intervals. Then, interpolate $\bfeta'$ linearly on each such segment. Since on both ends of the segment the inequality $\bfeta' \cdot \tmmathbf{\tilde x} < 0$ 
is fulfilled by construction, it is fulfilled on the whole segment. The 
resulting vector field $\bfeta'$ is piecewise smooth by construction. 
The procedure is schematically outlined in Figure~\ref{fig:filling}.

\begin{figure}[h]
  \centering{\includegraphics[width=0.3\textwidth]{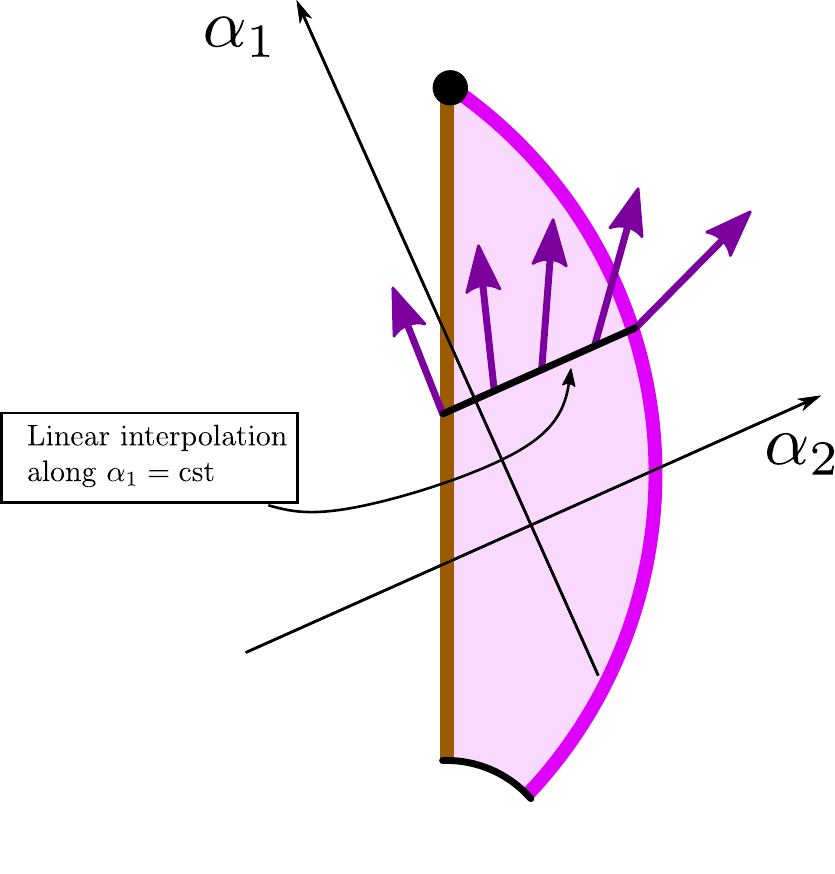}}
  \caption{Illustration of the procedure of continuing $\bfeta'$ from the boundary of a region onto the whole region.}
\label{fig:filling}
\end{figure}

This finishes the sketch of the proof of the locality principle. 

%%%%%%%%%%%%%%%%%%%%%%%%%%%%%%%%%%%%%%%%%%%%%%%%%%%%%%%%%%%%%%%%%%%%%

\section{Two illustrative examples}\label{sec:simplenontrivialexamples}

The aim of this section is to demonstrate the validity of the asymptotic procedure developed thus far on two simple, yet non-trivial, examples. 
In order to do so, we will compare direct numerical computations of 2D Fourier integrals with the asymptotic formulae derived in the article, and show that the agreement between the two is excellent in the far-field.
%Such a demonstration should include a Fourier integral computed directly and its 
%asymptotics. A good agreement between these functions would mean that the validity of the method.

It should be mentioned however that numerically evaluating a 2D Fourier integral is not a simple 
task. On the one hand, it is necessary to truncate the domain of integration, and on the other hand, in general, the integrand is very oscillatory and not necessary small outside the truncated domain. In order to address these issues, we will use the theory developed in Section \ref{sec:nearactive} and \ref{sec:integrationawayfromactive} and deform the surface of integration to a flattable surface $\bdG'$ described by a real vector field $\bdeta'$, built so that the integrand is exponentially small everywhere except in the vicinity of the contributing points. The only constraint on the truncated domain is hence to be large enough in order for it to lie over all the contributing points. We note also that this is an opportunity for the procedure of building the surface $\bdG'$ to be validated on a practical level.

%Anyway, we should truncate the domain of integration somehow. The
%integrand is generally oscillaing and not small on the truncation boundary. 
%Thus, we deform the initial integration surface to guarantee the smallness of the 
%omitted part of the surface. 
%For this deformation {\em we use the same vector function $\bdeta'$\/} as we built
%for contour $\bdG'$ in Sections \ref{sec:nearactive} and \ref{sec:integrationawayfromactive}. However, we compute the integral everywhere, not only near the special points. 
%On the one hand, this approach is slightly dishonest since what we compute is not the initial integral.
%However, this disadvantage is tolerable since we ``trust'' in the multidimensional 
%Cauchy's theorem. On the other hand, the procedure of integration comprises 
%building a global surface of integration according to the scheme described above. 
%Thus, we use a possibility to check this scheme.    

The contributing points of the first example are all transverse crossings, while the second example leads to a combination of a \RED{SOS} and some transverse crossings.

\begin{exa}
	Find the asymptotic behaviour of $u (\tmmathbf{x}) = \underset{\varkappa
		\searrow 0}{\lim}  u (\tmmathbf{x}; \varkappa)$ as $r = | \tmmathbf{x} |
	\rightarrow \infty$ where
	\begin{align}
		u (\tmmathbf{x}; \varkappa) & =  \iint_{\mathbb{R}^2} \frac{e^{-
				i\tmmathbf{x} \cdot \tmmathbf{\xi}}}{(\xi_1 + i \varkappa)^{1 / 2} (\xi_2 +
			i \varkappa)^{1 / 2} (\xi_1 + \xi_2 - 1 + i \varkappa)^{\RED{1 / 2}}} \, \mathd
		\tmmathbf{\xi},
		\label{eq:triang_int}
	\end{align}
and validate it numerically for a specific observation direction $\tilde{\bdx}$.
\label{ex:7.2}
\end{exa}

The integrand has the real property and three irreducible singularity
components $\sigma_{1, 2, 3}$ with respective defining function $g_{1, 2, 3}$ defined by
	$g_1 (\tmmathbf{\xi}) = \xi_1$, $g_2 (\tmmathbf{\xi}) = \xi_2$ and $g_3
	(\tmmathbf{\xi}) = \xi_1 + \xi_2 - 1$.
Using Section \ref{sec:findinggoodbridges}, we obtain the bridge configuration over the real traces $\sigma_{1, 2, 3}'$
displayed in Figure \ref{fig:triang_sing} (left). These singularities lead to
three transverse crossings $\tmmathbf{\xi}_{1, 2, 3}^{\star}$ defined by
$\tmmathbf{\xi}_1^{\star} = (0, 0)$, $\tmmathbf{\xi}_2^{\star} = (0, 1)$ and $\tmmathbf{\xi}_3^{\star} = (1, 0)$,
each with their own region of activity, as depicted in Figure
\ref{fig:triang_sing} (left). For each singularity, we can define the usual
quantities $\text{\tmverbatim{a}}_{1, 2, 3}^{\star}$ and
$\text{\tmverbatim{b}}_{1, 2, 3}^{\star}$, which, given the fact that the real
traces are straight lines, remain the same on the whole of $\sigma_{1, 2, 3}'$.
They are given by $\text{\tmverbatim{a}}_{1, 3}^{\star} =
\text{\tmverbatim{b}}_{2, 3}^{\star} = 1$ and $\text{\tmverbatim{a}}_2^{\star}
= \text{\tmverbatim{b}}_1^{\star} = 0$. We can also define the normals
$\tmmathbf{n}^{\star}_{1, 2, 3}$ by $\tmmathbf{n}_1^{\star}
=\tmmathbf{e}_{\xi_1}$, $\tmmathbf{n}_2^{\star} =\tmmathbf{e}_{\xi_2}$,
$\tmmathbf{n}_3^{\star} = \tfrac{1}{\sqrt{2}} (\tmmathbf{e}_{\xi_1}
+\tmmathbf{e}_{\xi_2})$. Because of this and the bridge configuration, it is
clear in this case that for each crossing the sign factors are always $s_1 =
s_2 = + 1$. To ensure that the determinant $\Delta^{\star}_{1, 2, 3}$ at each
crossing is positive, we consider the real traces in the following order when applying the results of Section
\ref{subsec:transversal}: $\tmmathbf{\xi}_1^{\star} \leftrightarrow
	(\sigma_1', \sigma_2')$, $\tmmathbf{\xi}_2^{\star} \leftrightarrow  (\sigma_1', \sigma_3')$ and $\tmmathbf{\xi}_3^{\star} \leftrightarrow (\sigma_3', \sigma_2')$. As a result we have $\Delta_{1, 2, 3}^{\star} = 1$. Moreover, looking at the
	local behaviour of the integrand near each crossing,  the constant in (\ref{eq:approxFtransverse}) is given for each crossing by $A_1 = \RED{-i}$ and
	$A_{2, 3} = 1$. The resulting asymptotic formula is 
\begin{align}
	u (\tmmathbf{x}) & \approx \RED{-}4 \pi \frac{\mathcal{H} (x_1)}{\sqrt{x_1}} 
	\frac{\mathcal{H} (x_2)}{\sqrt{x_2}} - 4 i \pi e^{- i x_2} 
	\frac{\mathcal{H} (x_1 - x_2)}{\sqrt{x_1 - x_2}}  \frac{\mathcal{H}
		(x_2)}{\sqrt{x_2}} - 4 i \pi e^{- ix_1}  \frac{\mathcal{H}
		(x_1)}{\sqrt{x_1}}  \frac{\mathcal{H} (- x_1 + x_2)}{\sqrt{- x_1 + x_2}},
	\label{eq:triang_asympt}
\end{align}
for some $\tmmathbf{x}= r \tilde{\tmmathbf{x}}$, where $\tilde{\tmmathbf{x}}$
is not perpendicular to any of the real traces and $\mathcal{H}$ is the usual
Heaviside function.

Let us now pick the observation direction $\tilde{\bdx}=\tfrac{1}{\sqrt{3.06}}(1.5,0.9)$ as represented in Figure \ref{fig:triang_sing} (left). We note that for this direction, the only contributing crossings are $\bdxi_1^\star$ and $\bdxi_2^\star$. We will now evaluate the integral (\ref{eq:triang_int}) numerically for different values of $r$. To do this we explicitly build a surface of integration $\bdG'$ and its defining vector field $\bdeta'$. In the vicinity of the contributing points, this is done using Section \ref{subsec:transversal}, and, in particular, the formula (\ref{eq:good_surf_1}) and the mapping $\psi$. Outside of these vicinities, we apply the procedure described in Section \ref{sec:integrationawayfromactive}: we decompose the real plane into several subdomains, depicted in Figure \ref{fig:triang_sing} (right), define $\bdeta'$ on the boundaries of these regions carefully and define it inside each region by linear interpolation. 

\begin{figure}[h]
	\centering{\includegraphics[width=0.6\textwidth]{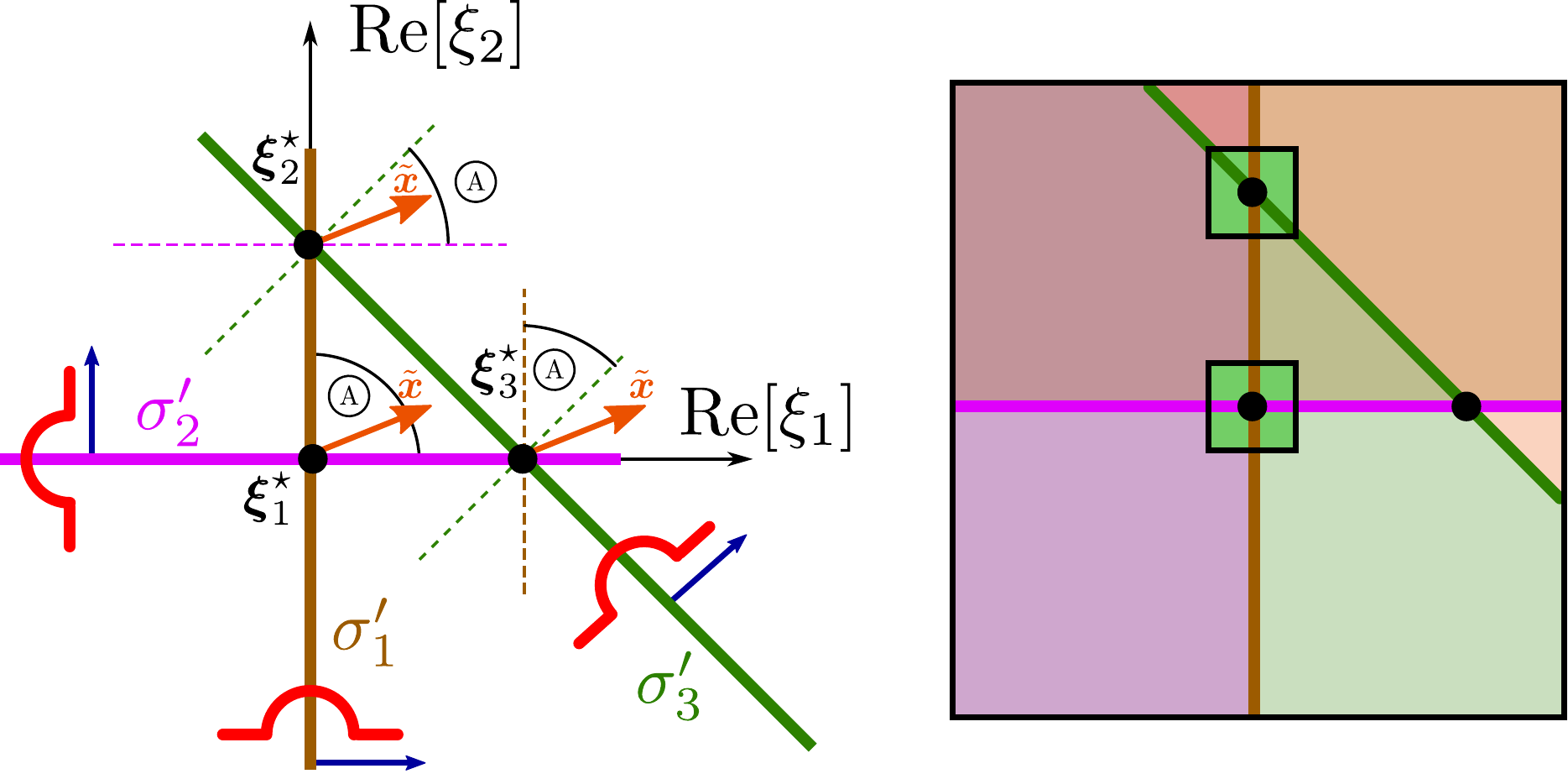}}
	\caption{The real traces associated to the integral (\ref{eq:triang_int}) together with its bridge configuration and the regions of activity of the crossings (left); the domain decomposition used to build $\bdeta'$ (right).}
	\label{fig:triang_sing}
\end{figure}
The resulting vector field $\bdeta'$ is displayed in Figure \ref{fig:eta_heatmaps} (left). We also provide a heat map of $\tilde{\bdx}\cdot \bdeta'$ in Figure \ref{fig:eta_heatmaps} (right) to show that it is indeed negative everywhere outside the vicinity of the two contributing points. 
\begin{figure}[h]
	\centering{\includegraphics[width=0.45\textwidth]{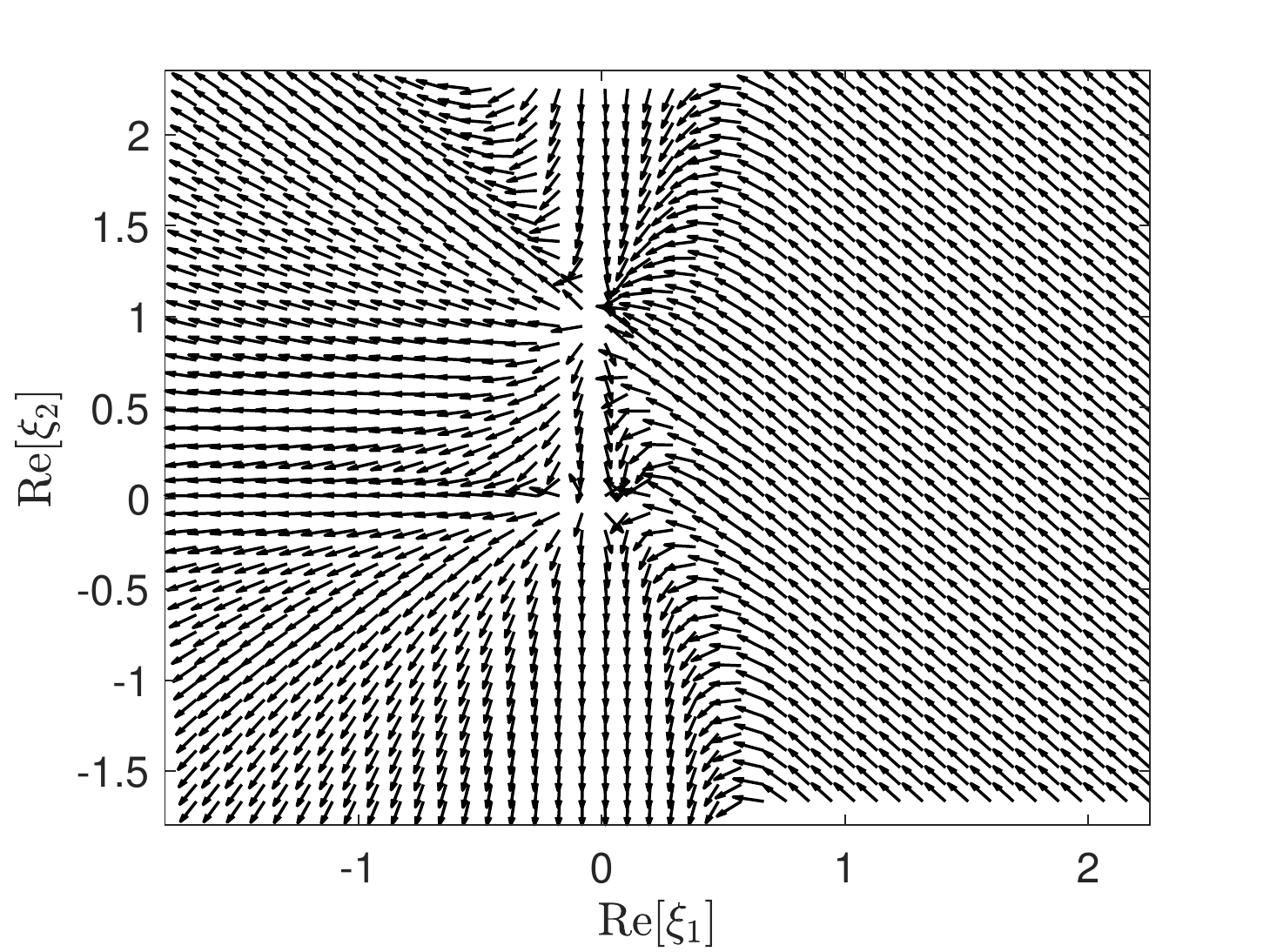}\includegraphics[width=0.45\textwidth]{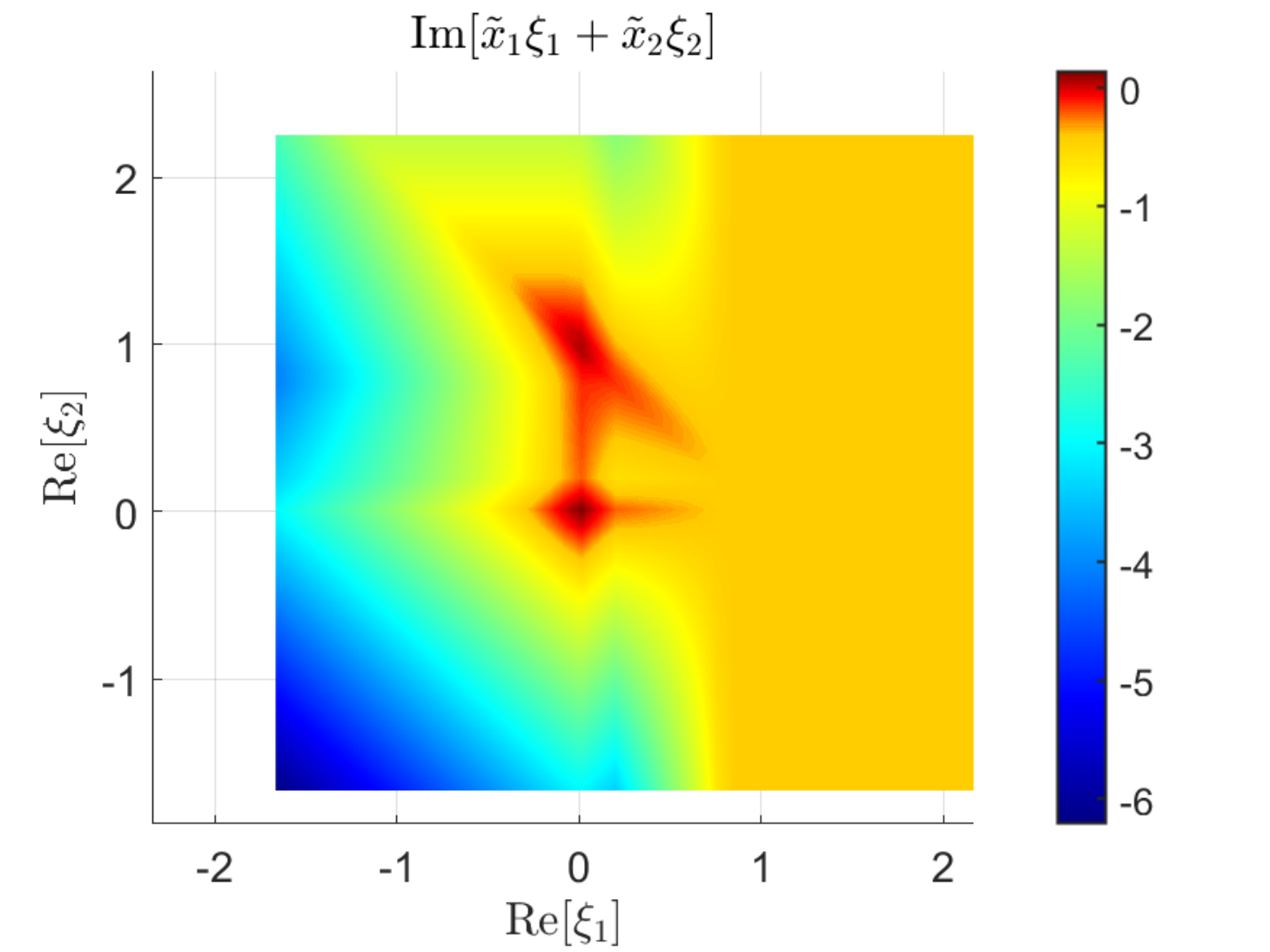}}
	\caption{The vector field $\tmmathbf{\eta'}$ (left); the imaginary part of the phase function ${\rm Im}[\tilde{\bdx}\cdot\bdxi]=\tilde{\bdx}\cdot\bdeta'$ (right).}
	\label{fig:eta_heatmaps}
\end{figure}

	The numerical integration is performed using a midpoint rule. We use the fact  that $\mathd\tmmathbf{\xi}$ is the  wedge product  $\mathd\xi_1\wedge \mathd\xi_2$ in order to find the area of each elementary surface on the integration grid. Indeed, $\mathd\xi_j$ can be considered as a function of some complex vector $\tmmathbf{v}$ that acts as follows: 
\begin{equation}
	\mathd\xi_j({\tmmathbf{v}}) = v_j,
\end{equation}
where $v_j$ is the $j$-th component of the vector (see e.g.\ \cite{Jo-2018}).
Then, by definition 
\begin{equation}
	\mathd\xi_1\wedge \mathd\xi_2 = 
	\begin{vmatrix}
		\mathd\xi_1(\delta\tmmathbf{\xi}_1)&\mathd\xi_1(\delta\tmmathbf{\xi}_2)\\
		\mathd\xi_2(\delta\tmmathbf{\xi}_2)&\mathd\xi_2(\delta\tmmathbf{\xi}_2)
	\end{vmatrix} =
	\begin{vmatrix}
		\mathd\xi_1^r + i({\ptl \eta_1'}/{\ptl \xi_1^r})\mathd\xi_1^r& i({\ptl \eta_1'}/{\ptl \xi_2^r})\mathd\xi_2^r\\
		i({\ptl \eta_2'}/{\ptl \xi_1^r})\mathd\xi_1^r&\mathd\xi_2^r + i({\ptl \eta_2'}/{\ptl \xi_2^r})\mathd\xi_2^r
	\end{vmatrix} 
	,
\end{equation}  
where $\delta\tmmathbf{\xi}_j$ are the vectors that produce  elementary surfaces (parallelograms) for the midpoint integration.
As a last precautionary measure, we visualise the log of the absolute value of the integrand of (\ref{eq:triang_int}) over $\bdG'$ in Figure \ref{fig:int_heatmaps} (left) and show that it is indeed small outside of the vicinity of the contributing points.  In Figure \ref{fig:int_heatmaps} (right) we compare the numerical results with the asymptotic formula (\ref{eq:triang_asympt}) for different values of $r$ and show that the agreement is excellent indeed, even for moderate values of $r$. 
\begin{figure}[h]
	\centering{\includegraphics[width=0.45\textwidth]{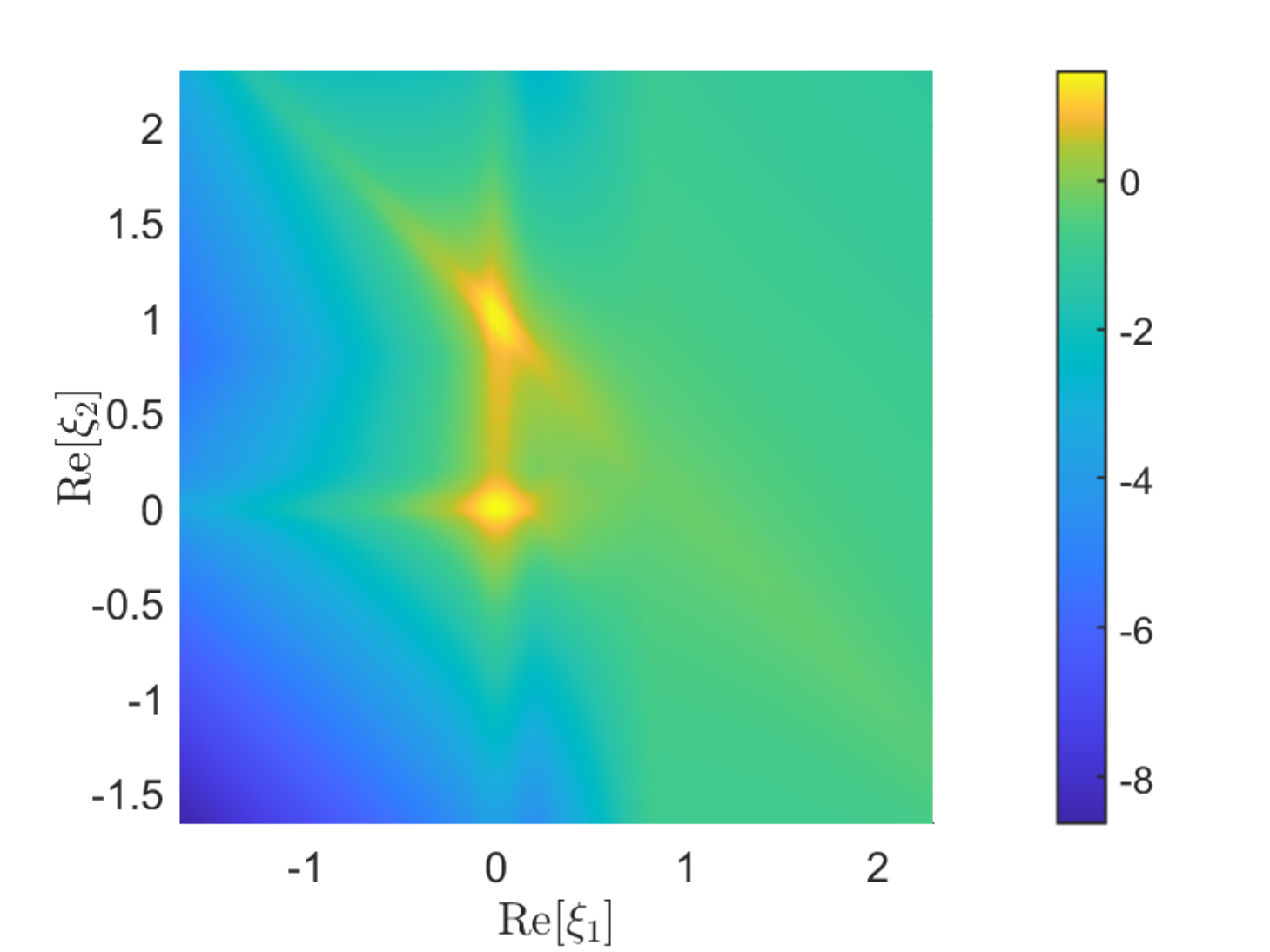}
		\includegraphics[width=0.45\textwidth]{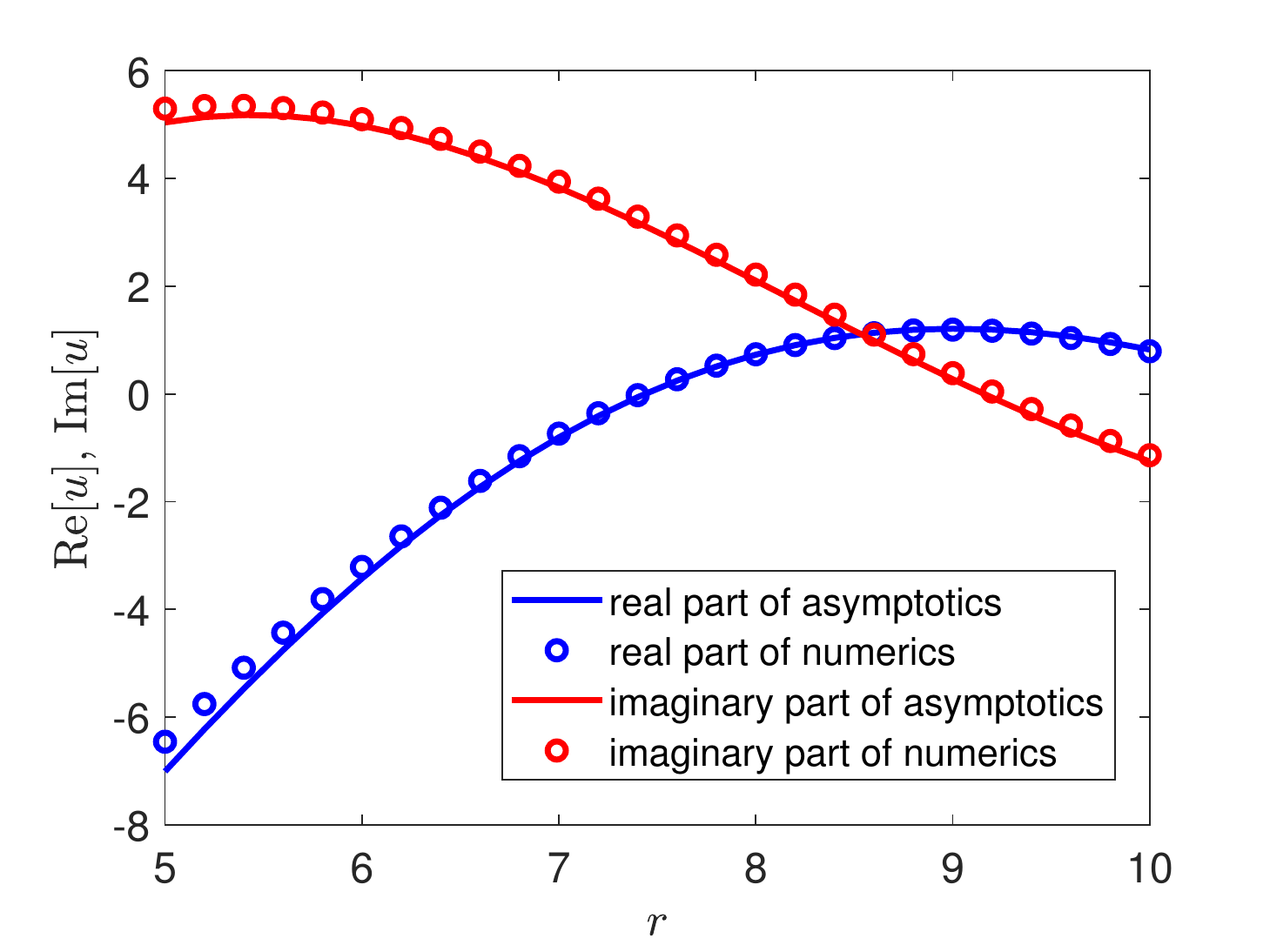}}
	\caption{ Heat map of the log of the absolute value of the integrand of (\ref{eq:triang_int}) on the deformed surface $\bdG'$ for $r=5$ (left); comparison between numerical results and the asymptotic formula (\ref{eq:triang_asympt}) (right).}
	\label{fig:int_heatmaps}
\end{figure}
\RED{One can expect that the next term in the asymptotic estimation (\ref{eq:triang_asympt}) will be of order $r^{-2}$. Let us check it numerically. In Figure~\ref{fig:error_triang_crossing}, we plot the absolute value of the difference  between the numerical  and  asymptotic estimation of (\ref{eq:triang_int}) on a logarithmic scale, and check that this difference decreases as $r^{-2}$.
\begin{figure}[h]
	\centering{\includegraphics[width=0.45\textwidth]{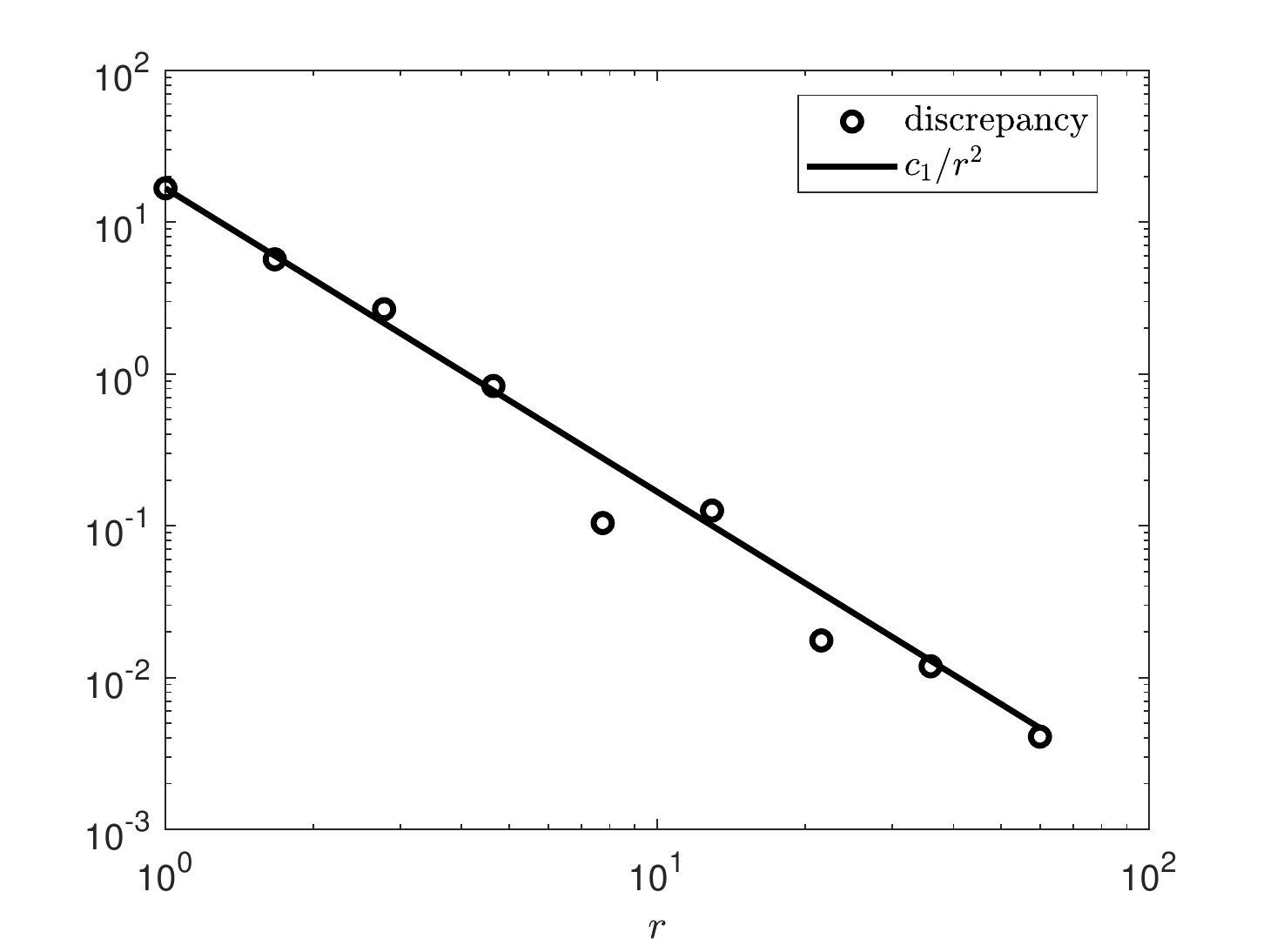}}
	\caption{\RED{Absolute value of the difference (discrepancy) between the numerical evaluation of the integral (\ref{eq:triang_int}) and its asymptotic estimation (\ref{eq:triang_asympt}), together with the function $c_1/r^2$, where $c_1$ is chosen in such a way that both graphs coincide at $r=1$.}}
	\label{fig:error_triang_crossing}
\end{figure}

We should note that the numerical integration scheme described here becomes computationally expensive relatively fast as $r$ grows. The authors were unable to evaluate the integral with $r \sim 100$ on a modern personal computer due to memory and computational issues. A proper way to deal with such a problem would be to build an optimal deformation of the surface of integration and use a dense mesh only in the vicinity of the active points. The latter falls out of  scope of the present paper, and could be a subject of another one.          
}

\begin{exa}
		Find the asymptotic behaviour of $u (\tmmathbf{x}) = \underset{\varkappa
		\searrow 0}{\lim}  u (\tmmathbf{x}; \varkappa)$ as $r = | \tmmathbf{x} |
	\rightarrow \infty$ where
	\begin{align}
		u (\tmmathbf{x}; \varkappa) & =  \iint_{\mathbb{R}^2} \frac{e^{-
				i\tmmathbf{x} \cdot \tmmathbf{\xi}}}{(\xi_2 - 2 + i \varkappa)^{1 / 2}
			(\xi_2 - \xi_1^2 + i \varkappa)^{1 / 2}} \, \mathd \tmmathbf{\xi},
			\label{eq:saddle_parabola}
	\end{align}
and validate it numerically for a specific observation direction $\tilde{\bdx}$.
\label{ex:7.5}
\end{exa}
%	\begin{equation}
%		\label{eq:saddle_parabola}
%		u(x_1,x_2) = \iint_{\mathbb{R}^2}
%		\frac{\exp\{-i r (x_1\xi_1+x_2\xi_2)\}}{(\xi_2 - b+i\varkappa)^{-\mu_1}(\xi_2 - a\xi_1^2+i\varkappa)^{-\mu_2}}
%		\rm d \bfxi, \quad a>0,b>0.
%	\end{equation}

The integrand has the real property and two irreducible singularity
components $\sigma_{1, 2}$ with respective defining function $g_{1, 2}$ defined by
$g_1 (\tmmathbf{\xi}) = \xi_2-2$, $g_2 (\tmmathbf{\xi}) = \xi_2-\xi_1^2$.
Using Section \ref{sec:findinggoodbridges}, we obtain the bridge configuration over the real traces $\sigma_{1, 2}'$
displayed in Figure \ref{fig:par_cross_geom} (left). These singularities lead to two
transverse crossings $\tmmathbf{\xi}_2^{\star} = \left( - \sqrt{2}, 2 \right)$
and $\tmmathbf{\xi}_3^{\star} = \left( \sqrt{2}, 2 \right)$ whose regions of activity are displayed on Figure \ref{fig:par_cross_geom} (left). We also assume that $\tilde{\bdx}$ is chosen so that we have an active \RED{SOS} $\bdxi_1^\star$. Using what has been done in Section \ref{sec:pointonsinglesingu} we know that $\tilde{\bdx} \perp \sigma_2'$ at $\bdxi_1^\star$. Hence, given this bridge configuration, we have 
\begin{align*}
	\tmmathbf{\xi}_1^{\star} = \left( - \frac{\tilde{x}_1}{2 \tilde{x}_2},
\frac{(\tilde{x}_1)^2}{4 (\tilde{x}_2)^2} \right)=\left( - \frac{x_1}{2 x_2},
\frac{x_1^2}{4 x_2^2} \right) = (\xi_1^s, \xi_2^s).
\end{align*}
\RED{Note also that, given the bridge configuration, if $x_2\leq0$ there can be no active \RED{SOS} on $\sigma_2'$, and that if $x_1=0$, the \RED{SOS} will also be a \RED{SOS} for $\sigma_1'$, which  is a case that we do not consider here.}  

The quantities $\text{\tmverbatim{a}}_1^{\star} = 0$ and
$\text{\tmverbatim{b}}^{\star}_1 = 1$ associated to $\sigma_1'$ remain the
same on the whole of $\sigma_1'$ since it is a straight line, and so does the
normal $\tmmathbf{n}_1^{\star} =\tmmathbf{e}_{\xi_2}$. However for $\sigma_2'$, these quantities will vary according to which point we are at. We denote $\text{\tmverbatim{a}}^{\star}_{2, (i)}$ and
$\text{\tmverbatim{b}}^{\star}_{2, (i)}$ these quantities at the point
$\tmmathbf{\xi}_i^{\star}$, so we get	$\text{\tmverbatim{a}}^{\star}_{2, (1)} = \tfrac{\tilde{x}_1}{\tilde{x}_2}$, 
	$\text{\tmverbatim{a}}^{\star}_{2, (3)} = - 2 \sqrt{2}$, 
	$\text{\tmverbatim{a}}^{\star}_{2, (2)} = + 2 \sqrt{2}$ and $\text{\tmverbatim{b}}^{\star}_{2, (1,2,3)} = 1$. Using these quantities, we can naturally define the normals $\bdn^\star_{2,(1,2,3)}$ and show that all the sign factors are equal to $+1$ in this problem. To ensure that the determinant $\Delta^{\star}_{2, 3}$ at each
	crossing is positive, we consider the real traces in the following order when applying the results of Section
	\ref{subsec:transversal}:  $\tmmathbf{\xi}_2^{\star} \leftrightarrow  (\sigma_2', \sigma_1')$ and $\tmmathbf{\xi}_3^{\star} \leftrightarrow (\sigma_1', \sigma_2')$.  Moreover, looking at the
	local behaviour of the integrand near the two crossings,  the constant in (\ref{eq:approxFtransverse}) is given for each crossing by $A_{2,3} = 1$, while the constant giving the local behaviour (\ref{eq:approxFsaddle}) near the \RED{SOS} is given by $A_1=(\xi_2^s-2)^{-1/2}$. Finally, using the Appendix \ref{app:quadratic}, we find that $\alpha$ in (\ref{eq:grotatedvarsaddle}) is given by $\alpha=(\tilde{x}_2)^4$. Combining the results of Sections \ref{sec:awaysaddle} and \ref{subsec:transversal}, we obtain
	\begin{align}
		u(\bdx) &\approx \frac{- 2 i \pi e^{- i (x_1 \xi_1^s + x_2 \xi_2^s)} }{x_2(\xi_2^s - 2)^{ 1 /
				2}}\RED{\mathcal{H}(x_2)} - 4 i \pi e^{- i \left( - \sqrt{2} x_1 + 2 x_2 \right)}  \frac{\mathcal{H}
			(x_1)}{\sqrt{x_1}} \frac{\mathcal{H} \left( - x_1 + 2 \sqrt{2} x_2
		\right)}{\sqrt{- x_1 + 2 \sqrt{2} x_2}}  \nonumber \\
	 & - 4 i \pi e^{- i \left( \sqrt{2} x_1 + 2 x_2 \right)} \frac{\mathcal{H}
	 	\left( x_1 + 2 \sqrt{2} x_2 \right)}{\sqrt{x_1 + 2 \sqrt{2} x_2}}
	 \frac{\mathcal{H} (- x_1)}{\sqrt{- x_1}},
	 \label{eq:asymptexample2}
	\end{align}
	as long as $\tilde{\bdx}$ is not perpendicular to $\sigma_1'$. \RED{Note that the seemingly problematic case $\xi_2^s=2$ corresponds to a situation (not considered in the present article) when $\bdxi_1^\star$ coincides with $\bdxi_2^\star$ or $\bdxi_3^\star$, i.e.\ when a transverse crossing is also a SOS. For convenience of implementation, it is possible to rewrite the first term of (\ref{eq:asymptexample2}) solely in terms of $x_1$ and $x_2$ by realising that:}
	\RED{\begin{align}
		\frac{- 2 i \pi e^{- i (x_1 \xi_1^s + x_2 \xi_2^s)} }{x_2(\xi_2^s - 2)^{ 1 /
				2}}\mathcal{H}(x_2)=-\frac{4\pi e^{i\frac{\pi}{4}[1+\text{sign}(x_1^2-8x_2^2)]}}{\sqrt{|x_1^2-8 x_2^2|}} e^{i\frac{x_1^2}{4x_2}}\mathcal{H}(x_2).
	\end{align}}

Let us now pick the observation direction $\tilde{\bdx}=\tfrac{1}{\sqrt{5}}(1,2)$ as represented in Figure \ref{fig:par_cross_geom} (left). We note that for this direction, the only contributing crossing is $\bdxi_2^\star$. 
\begin{figure}[h]
	\centering{\includegraphics[width=0.6\textwidth]{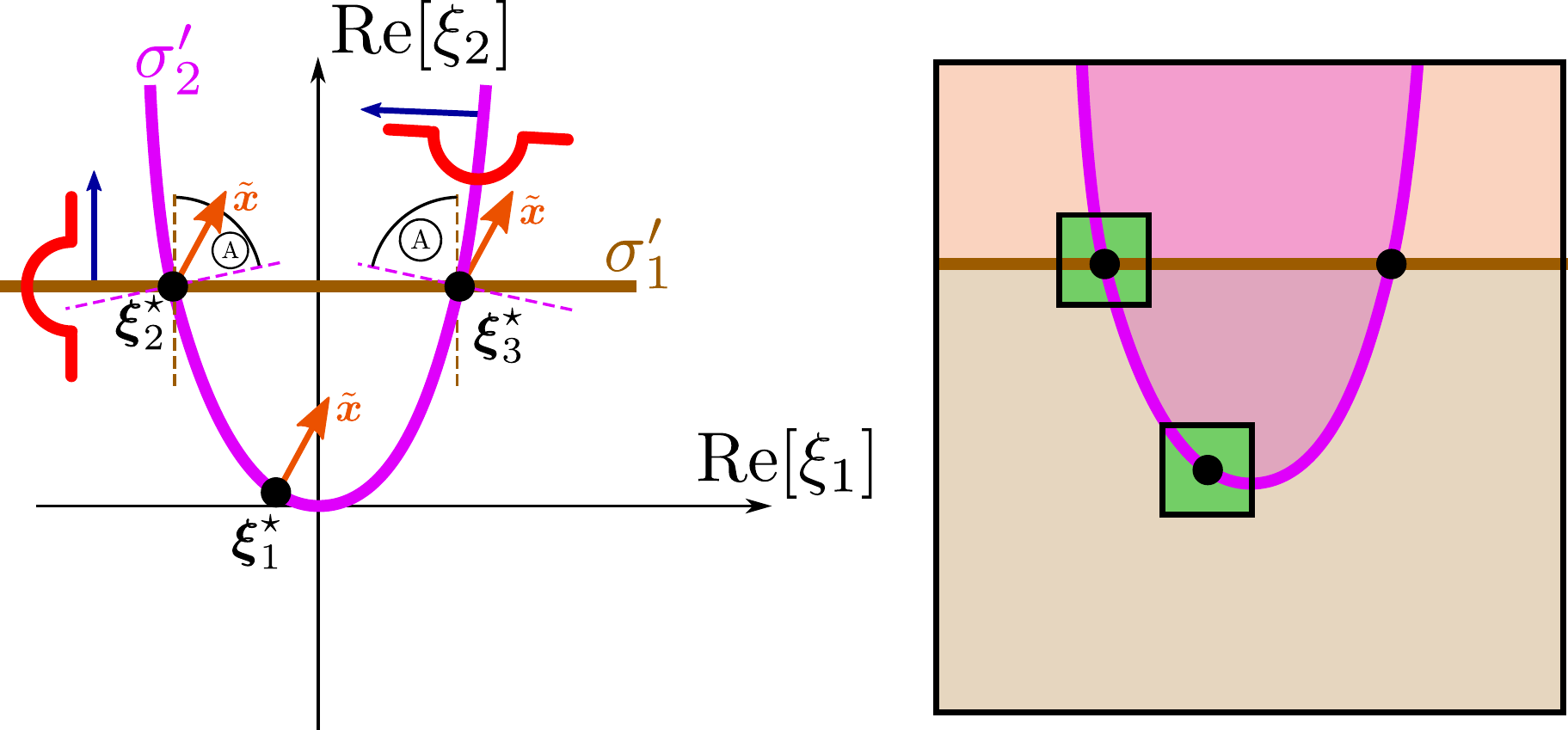}}
	\caption{The real traces associated to the integral (\ref{eq:saddle_parabola}) together with its bridge configuration and the regions of activity of the crossings (left); the domain decomposition used to build $\bdeta'$ (right).}
	\label{fig:par_cross_geom}
\end{figure}
Similarly to what has been done above, in order to evaluate the integral numerically, we build a flattable surface of integration $\bdG'$ with defining vector field $\bdeta'$, using Section \ref{subsec:transversal} and formula (\ref{eq:good_surf_1}) in the vicinity of the transverse crossing, Section \ref{sec:awaysaddle} and formula (\ref{eq:good_surf}) in the vicinity of the \RED{SOS}. Outside of these vicinities, we decompose the domain as in Section \ref{sec:integrationawayfromactive} and as illustrated in Figure \ref{fig:par_cross_geom} (right), and use linear interpolation. The resulting vector field is plotted in Figure \ref{fig:eta_heatmaps_2} (left). In Figure \ref{fig:eta_heatmaps_2} (right) we compare the numerical results with the asymptotic formula (\ref{eq:asymptexample2}) for different values of $r$ and show that the agreement is excellent indeed, even for moderate values of $r$. 
\begin{figure}[ht!]
	\centering{\includegraphics[width=0.45\textwidth]{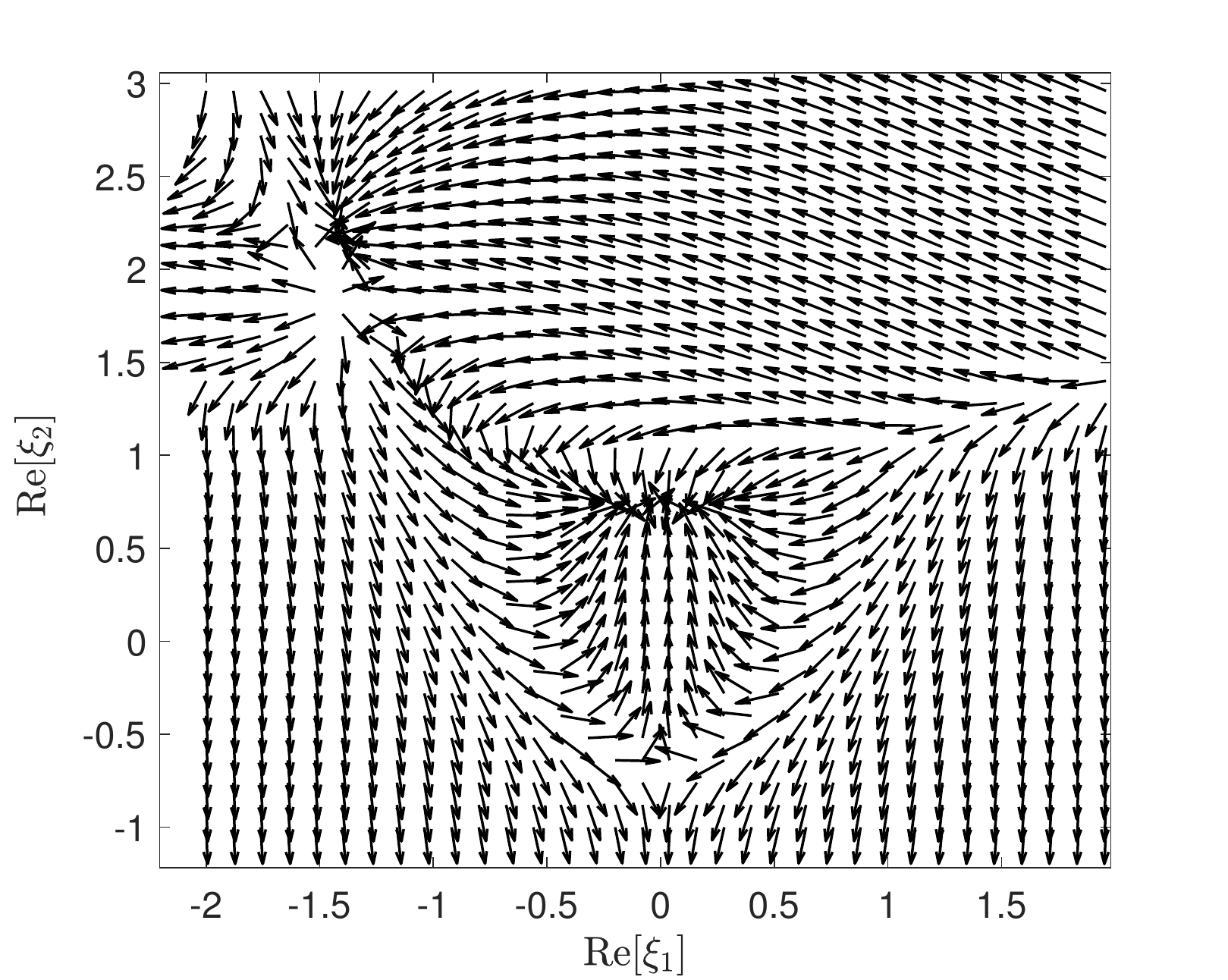}\includegraphics[width=0.48\textwidth]{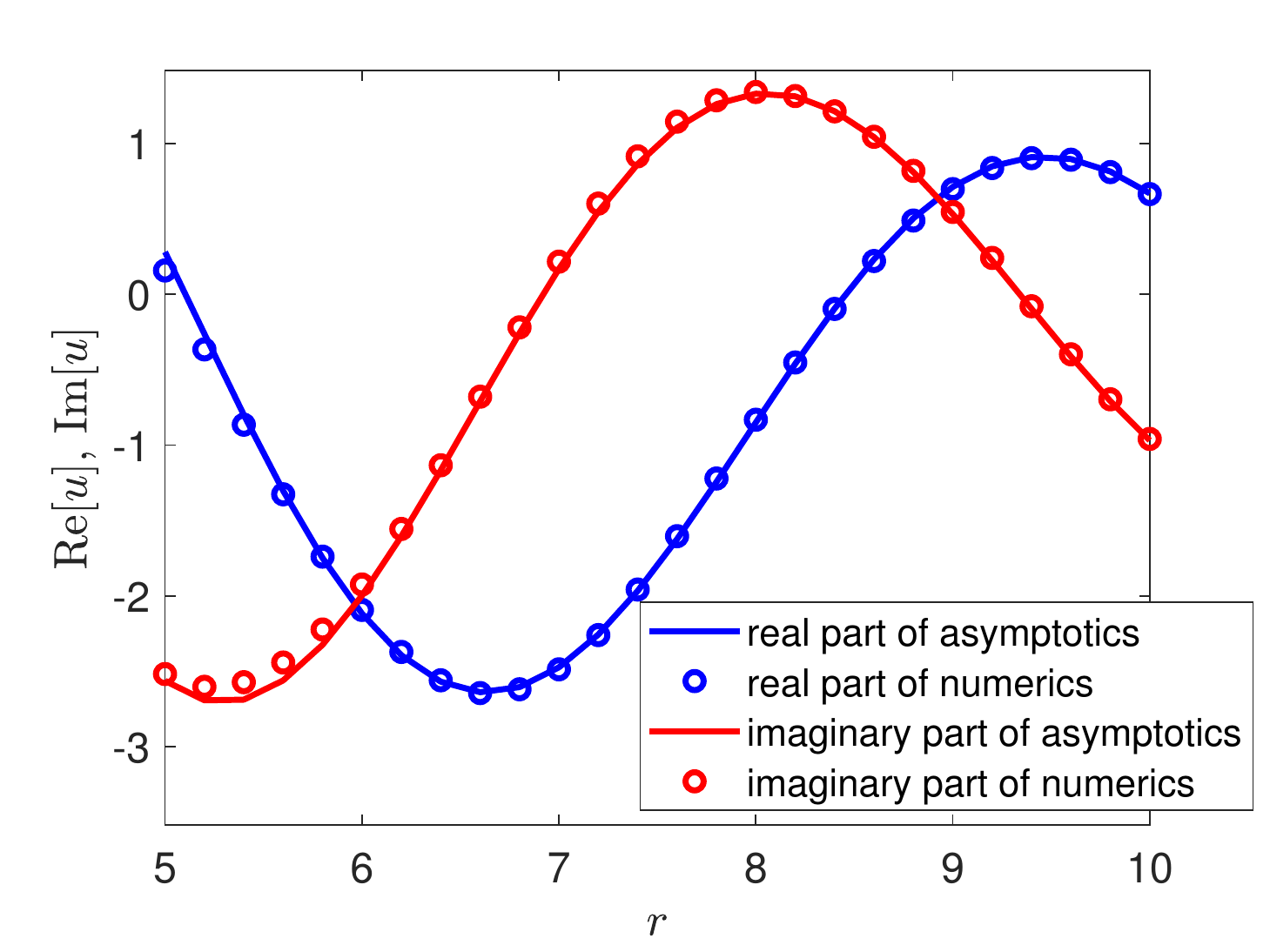}}
	\caption{The vector field $\tmmathbf{\eta'}$ (left); comparison between numerical results and the asymptotic formula (\ref{eq:asymptexample2}).}
	\label{fig:eta_heatmaps_2}
\end{figure}
\RED{Finally, in Figure~\ref{fig:error_saddle_crossing} we present the difference between the numerical and the asymptotic evaluation of (\ref{eq:saddle_parabola}) and check that it decreases as $r^{-2}$, as expected. 
	\begin{figure}[h]
		\centering{\includegraphics[width=0.45\textwidth]{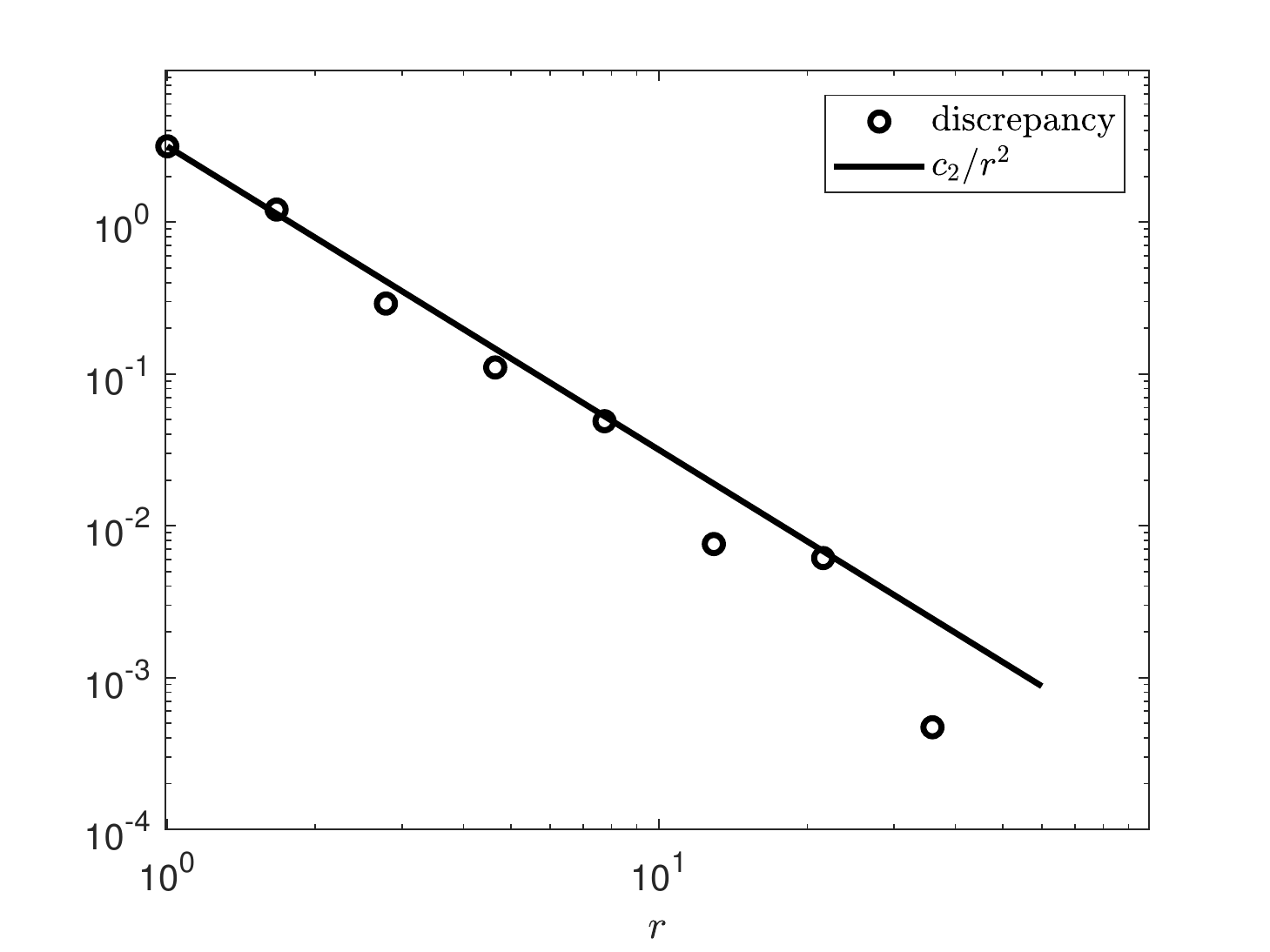}}
		\caption{\RED{Absolute value of the difference (discrepancy) between the numerical evaluation of the integral (\ref{eq:saddle_parabola}) and its asymptotic estimation (\ref{eq:asymptexample2}), together with the function $c_2/r^2$, where $c_2$ is chosen in such a way that both graphs coincide at $r=1$.}}
		\label{fig:error_saddle_crossing}
	\end{figure} 
}

\clearpage

\section{Conclusion}

In this article we proposed a new way to study physical fields $u$ given as
double Inverse Fourier Integrals of a function $F$ with the real-property.
These integrals occur very often in practice, and in diffraction theory in
particular \RED{(see also Appendix \ref{sec:complicated} for an extension of our method to more complicated integrals)}. In order to do so, we introduced the bridge and arrow notation to
describe the relative position of the surface of integration
$\bdG$ and the singularities of $F$ in a simple and visual way.
We then showed that for a given direction $\tmmathbf{x}$, only very specific
points of the real traces of $F$ lead to an asymptotic contribution to the
physical field $u$: the active \RED{SOS} and the active, non-additive transverse crossings.
Through a careful admissible deformation of the integration surface, we have also shown that the asymptotic expansion of the physical field could simply be written as a sum of the contributions of these special point. An explicit, closed-form expression for each contribution was given.
Finally, we concluded by a numerical illustration of the validity of the present theoretical development on two simple but non-trivial examples.
In Part II of this study, {\cite{Part6B}}, we will
demonstrate the strength and practicality of the present theory by considering
the three-dimensional problem of wave diffraction by a quarter-plane.

\textbf{Funding:} The authors would like to acknowledge funding by EPSRC (EP/W018381/1 and EP/N013719/1) for RCA and the RFBR grant 19-29-06048 for AVS and AIK. \\
\RED{\textbf{Acknowledgement:} The authors are very grateful to Professor I. David Abrahams for agreeing to play with the integrals of examples \ref{ex:7.2} and \ref{ex:7.5}, successfully recovering these results by other means and discovering a typographical error in (\ref{eq:triang_int})-(\ref{eq:triang_asympt}) in the initially submitted manuscript.}

%%% Bibliography %%%
\bibliography{biblio}
\bibliographystyle{unsrt}

\appendix
\counterwithin{figure}{section}

\section{On the choice of quadratic term} \label{app:quadratic}
In this appendix, we show that any quadratic terms of the form
\begin{align}
	T & = \tilde{\alpha} (\Delta \xi_1)^2 + \tilde{\beta} (\Delta \xi_2)^2 + \tilde{\gamma} (\Delta
	\xi_1) (\Delta \xi_2), \label{eq:initialquad}
\end{align}
where $\Delta \xi_{1,2}=(\xi_{1,2}-\xi_{1,2}^\star)$, can be written in the form
\begin{align}
	T & =  A \zeta_1^2 + B \Lambda^2 + C \Lambda \zeta_1,\quad \text{where} \quad  \zeta_1 = \tvb  \Delta \xi_1 - \tva  \Delta \xi_2 \quad \text{and} \quad \Lambda = \tva
	\Delta \xi_1 + \tvb  \Delta \xi_2, \label{eq:aimquad}
\end{align}
for some pair of numbers $(\tva,\tvb)$. This can be done directly by expanding $T$ as follows:
\begin{eqnarray}
	T & = & A (\tvb \Delta \xi_1 - \tva \Delta \xi_2)^2 + B (\tva \Delta \xi_1 + \tvb 
	\Delta \xi_2)^2 + C (\tvb  \Delta \xi_1 - \tva  \Delta \xi_2) (\tva \Delta \xi_1 +
	\tvb  \Delta \xi_2) \nonumber\\
	& = & (\tvb^2 A + \tva^2 B + \tva \tvb C) (\Delta \xi_1)^2 + (\tva^2 A + \tvb^2 B - \tva \tvb C)
	(\Delta \xi_2)^2 \nonumber\\
	& + & (- 2 \tva \tvb A + 2 \tva \tvb B + (\tvb^2 - \tva^2) C) (\Delta \xi_1) (\Delta \xi_2). 
	\label{eq:exedquad}
\end{eqnarray}
Hence comparing (\ref{eq:exedquad}) and (\ref{eq:initialquad}),
we get
\begin{align*}
	\left( \begin{array}{c}
		\tilde{\alpha}\\
		\tilde{\beta}\\
		\tilde{\gamma}
	\end{array} \right) & =  \left( \begin{array}{ccc}
		\tvb^2 & \tva^2 & \tva \tvb\\
		\tva^2 & \tvb^2 & -\tva \tvb\\
		- 2 \tva \tvb & 2 \tva \tvb & \tvb^2 - \tva^2
	\end{array} \right) \left( \begin{array}{c}
		A\\
		B\\
		C
	\end{array} \right),
\end{align*}
and, for some given $(\tilde{\alpha}, \tilde{\beta}, \tilde{\gamma})$, the value
of $(A,B,C)$ can be recovered by inverting this matrix. In particular, we get
\begin{align}
	A&=\frac{\tvb^2\tilde{\alpha}+\tva^2\tilde{\beta}-\tva\tvb\tilde{\gamma}}{(\tva^2+\tvb^2)^2}
\end{align}

Now imagine that there is a function $g$ that can be written
\begin{align*}
	g (\xi_1, \xi_2) & =  \tva \Delta \xi_1 + \tvb \Delta \xi_2 + \tilde{\alpha} 
	(\Delta \xi_1)^2 + \tilde{\beta} (\Delta \xi_2)^2 + \tilde{\gamma} (\Delta
	\xi_1) (\Delta \xi_2) +\mathcal{O} \left( \text{h.t.} \right),
\end{align*}
where h.t. means terms of order 3 and higher. Then we can always rewrite it as
\begin{align*}
	g (\xi_1, \xi_2) & =  \Lambda - \alpha \zeta_1^2 +\mathcal{O} (\Lambda^2 +
	\zeta_1 \Lambda) + \text{h.t.},
\end{align*}
where $\alpha = - A$. Upon introducing the variable $\zeta_2 = g (\xi_1,
\xi_2)$, it is clear that $\mathcal{O} (g) =\mathcal{O} (\Lambda) =\mathcal{O}
(\zeta_2)$, so that $\Lambda$ can be rewritten as $\Lambda = \zeta_2 + \alpha
\zeta_1^2 +\mathcal{O} \left( (\zeta_2)^2 + \zeta_1 \zeta_2 + \text{h.t.}
\right)$. \RED{Note that using the curvature formula for implicit planar curves (see e.g.\ \cite{Goldman2005-xu}), we can show that $\alpha$ is related to the curvature $\kappa$ of the planar curve defined by $g(\xi_1,\xi_2)=0$ as follows: $|\alpha|=|\kappa|/(2(\tva^2+\tvb^2)^{1/2})$.}

\RED{
	\section{On extending the approach to more complicated integrals}
	\label{sec:complicated}
	In wave motivated problems, one may be interested in the evaluation of a so-called plane wave decomposition integral of the form
	\begin{equation}
		u (x_1,x_2,x_3) = 
		\iint_{\mathbb{R}^2} F (\tmmathbf{\xi}) e^{-ix_1\xi_1 - ix_2\xi_2 + ix_3\sqrt{k^2 - \xi_1^2-\xi_2^2}} \mathd \tmmathbf{\xi},  
		\label{eq:plane_wave-integral}
	\end{equation} 
	where $k$ is some fixed real parameter that is usually referred to as a wavenumber, and $x_3>0$. Note that when $x_3=0$, this integral reduces to the double Fourier integral that has been the main subject of this article. 
	Upon rewriting the physical coordinates as 
	\[
	(x_1 , x_2 , x_3) = r (\tilde x_1, \tilde x_2 , \tilde x_3), 
	\]
	where $\tilde x_1^2 + \tilde x_2^2 + \tilde x_3^2 = 1$, and $r$ is a large parameter,  
	our aim is to estimate this integral as $r \to \infty$ to obtain a far-field approximation to the  three-dimensional physical field $u$. The integral (\ref{eq:plane_wave-integral}) belongs to a larger class of integrals that can be written 
	\begin{equation}
		u  = 
		\iint_{\mathbb{R}^2} F (\tmmathbf{\xi})
		\exp \{- i r  G (\tmmathbf{\xi}) \} 
		\mathd \tmmathbf{\xi} .
		\label{eq:plane_wave-integral_1}
	\end{equation}
	In the specific case of (\ref{eq:plane_wave-integral}) we chose the function $G$ to be defined by
	\begin{equation}
		 G (\xi_1 , \xi_2) = 
		\tilde x_1 \xi_1 +  \tilde x_2 \xi_2 - \tilde x_3\sqrt{k^2 - \xi_1^2-\xi_2^2}.
		\label{eq:particular_G}
	\end{equation}
	
	Despite the fact that the integral (\ref{eq:plane_wave-integral_1}) is clearly different from (\ref{eq:initial-integral}), 
	most of the consideration above can be applied to it. With this appendix, we wish to briefly list some ideas that can be used to modify our method to accommodate for integrals such as (\ref{eq:plane_wave-integral_1}). 
	
	\vskip 6pt
	\noindent
	{\bf 1. }
	Beside the singularities of the function $F (\tmmathbf{\xi})$, one should also study the 
	singularity set of $ G(\tmmathbf{\xi})$. With our choice (\ref{eq:particular_G}), it is the branch set 
	\begin{equation}
		\label{circle_sing}
		\sigma_0: \{\bdxi\in\mathbb{C}^2, \ 
		\xi_1^2 + \xi^2_2 = k^2\}, 
	\end{equation}
	with a real trace~$\sigma_0'$, which is a real circle.
	An indentation of the integration surface around $\sigma_0'$ should be chosen according to 
	the procedure described in 
	Subsection~\ref{sec:findinggoodbridges}. 
	As it is usually done in diffraction theory, the wavenumber parameter 
	$k$ is taken to have a small positive imaginary part $\varkappa$
	(the real part is positive and not small), and the limit $\varkappa \to 0$ is considered. 
	The procedure is described in details in Example~\ref{ex:35}. The result of the procedure is shown in  
	\figurename~\ref{fig:ex8}, left.
	
	Such a choice of bypass (bridge and arrow) makes the
	value of $\sqrt{k^2 - \xi_1^2 - \xi_2^2}$ either positive real or positive imaginary 
	almost everywhere on the indented surface of integration obtained from~$\mathbb{R}^2$.
	Due to the sign of the square root's imaginary part, the integrand has exponential decay in 
	the domain $\xi_1^2 + \xi_2^2 > k^2$ of $\mathbb{R}^2$
	without needing any deformation of the integration surface there. Since $x_3>0$, even if $F$ has active points outside the circle, their associated surface deformation can be made small enough so that the exponential decay due to the square root's sign still dominates. 
	Thus, to estimate the 
	integral, it is enough to consider the inside of the circle 
	$\xi_1^2 + \xi_2^2 < k^2$ and the neighbourhood of~$\sigma_0'$.
	
	\vskip 6pt
	\noindent
	{\bf 2. }
	Let us consider $\tmmathbf{\xi}^{\star}$ to be a real point strictly inside the circle $\sigma_0'$. Because of this,  $ G(\tmmathbf{\xi}^\star)$ is real. Let us further assume that $\nabla  G(\bdxi^\star)\neq\boldsymbol{0}$, where $\nabla  G$ is naturally defined by
	\[
	\nabla  G(\tmmathbf{\xi}) = \left(   \tilde x_1 + \frac{\tilde x_3 \xi_1}{\sqrt{k^2 - \xi_1^2 - \xi_2^2}}  ,
	\tilde x_2 + \frac{\tilde x_3 \xi_2}{\sqrt{k^2 - \xi_1^2 - \xi_2^2}} \right). 
	\]
	Then one can locally change the coordinates
	to some $(\xi_1' , \xi_2')$, such that, in the new coordinates, $ G$ is a linear
	function of $(\xi_1' , \xi_2')$. For example, a possible choice of coordinates is 
	\[
	\xi_1' =  G(\tmmathbf{\xi}), \qquad \xi_2' = \xi_1 \mbox{ or } \xi_2,
	\]
	which yields 
	\[
	\exp\{-i r  G (\tmmathbf{\xi}) \}  = \exp\{-i r \xi_1' \}. 
	\]
	In the new variables, the integral (locally) has the form  (\ref{eq:initial-integral}), 
	and it can be studied using the methods described above. 
	In particular, only the points where $F$ is singular can be active, 
	and the possible classes of active points are the SOS and the transverse crossings. The estimations of the local integral can be made using the method proposed above.
	
	As the result of this analysis, 
	one finds that the vector $\nabla G$ plays the same role as $\tilde{\bdx}$ has played in the main body of this article (note also that when $x_3=0$, they are equal to each other).  
	The SOS of $F$ are now defined as points 
	where some real trace $\sigma'_j$ is orthogonal to~$\nabla G$ and one should use 
	$\nabla G$ instead of $\tilde \bdx$ in the criterion of activity of a crossing point.
	
	\vskip 6pt
	\noindent
	{\bf 3. }
	A real point $\tmmathbf{\xi}^\star$ strictly inside the circle $\sigma_0'$ (i.e.\ with real $G(\tmmathbf{\xi}^\star)$), but this time with $\nabla G(\bdxi^\star)=\boldsymbol{0}$ should be considered separately. Assume further, for the sake of simplicity, that $F$ is not singular at $\bdxi^\star$. 
	This point is a type of active special points that 
	cannot appear in integrals of the type (\ref{eq:initial-integral})
	but
	can appear in integrals of the type
	(\ref{eq:plane_wave-integral_1}).
	Fortunately, such active points are well-known in the literature. 
	They are the usual two-dimensional saddle points (see e.g.\ \cite{Borovikov1994, Wong2001-cd, Jones1982-xs, Bleistein1987-xa, Felsen1994-hb,Lighthill78,Jones1958-nf,fedoryuk1977saddle}), and, as summarised briefly in Section \ref{subsec:2d saddle},  
	one can easily derive an estimate of the type (\ref{eq:saddle_estimation})
	provided that the Hessian of $G$ is not degenerate at~$\tmmathbf{\xi}^\star$. 
	
	It may be interesting to obtain this estimation 
	in the framework of the current paper. 
	Perform a local biholomorphic change of variables $\bdxi \leftrightarrow \bdxi'$, 
	such that $G$ in the new coordinates becomes written as 
	\[
	 G =  G(\tmmathbf{\xi}^\star) \RED{+} (\xi_1')^2 \RED{+} (\xi_2')^2 + O(|\xi_1'|^3 + |\xi_2'|^3).
	\]
	Here we used the fact that the eigenvalues of the Hessian of the function 
	$G$ given in (\ref{eq:particular_G}) are both \RED{positive}, but some other function $G$ may lead to other 
	signs in the quadratic form above. Note also that here we imply that $\tmmathbf{\xi}^\star$ maps to $\boldsymbol{0}$ in the new coordinates.
	
In the spirit of Section \ref{sec:surfaceparam}, build a local deformation of the integration surface described by the vector field $\bdeta'=(\eta_1',\eta_2')$ by choosing
	\[
	\eta_1' = \RED{ -{\rm Re}[\xi_1']}, \qquad \eta_2' = \RED{-{\rm Re}[\xi_2']}. 
	\]
	Such a deformation provides an exponential decay everywhere except at the point $\tmmathbf{\xi}^\star$. 
	The integral can be estimated near $\tmmathbf{\xi}^\star$
	as a repeated Gaussian, leading to
	\begin{equation}
		u \sim -\frac{i \pi }{r}  F(\tmmathbf{\xi}^\star) \exp \{ -i r G(\tmmathbf{\xi}^\star)\} J^\star,
		\label{eq:est_G}
	\end{equation}
where $J^\star$ is the Jacobian of the transformation $\bdxi \leftrightarrow \bdxi'$ at $\bdxi^\star$. 
	
	\vskip 6pt
	\noindent
	{\bf 4. }
	Let us now consider a  neighbourhood of $\sigma'_0$.
	Surprisingly, this set {\em does not contribute to the far field\/} if 
	$\tilde x_3 >0$. To demonstrate this, 
	consider a small real neighbourhood $\mathcal{B}^\star$ of a real  point $\tmmathbf{\xi}^\star$ belonging to the real circle $\sigma_0'$.
	Let us assume that this point is not near $(0, \pm k)$ (otherwise one should swap $\xi_1$ with $\xi_2$ in the discussion below). Assume also that $F$ does not have any singularities (crossings or SOS) on $\mathcal{B}^\star \setminus \sigma_0'$ and that no saddle points are present there either.
	Introduce the local change of coordinate $\bdxi \leftrightarrow \bdxi'$ defined by 
	\[
	\xi_1' = \sqrt{k^2 - \xi_1^2 - \xi_2^2}, 
	\qquad
	\xi_2' = \xi_2.
	\]
	Note that in the neighbourhood under consideration, $\xi_1'$ is small (because we are near $\sigma_0'$), and $|\xi_2'| < k$ (because we are not near $(0, \pm k)$). 
	One can hence see that   
	\begin{equation}
		\xi_1 = \sqrt{k^2 - (\xi_1')^2 - (\xi_2')^2} = 
		\sqrt{k^2 - (\xi_2')^2} - \frac{(\xi_1')^2}{2 \sqrt{k^2 - (\xi_2')^2}} + O(|\xi_1'|^3).
		\label{eq:est_xi_1}
	\end{equation}
	Moreover, the initial surface of integration of the corresponding local integral (an indented version of $\mathcal{B}^\star$) can be seen in the new variables as the product of a contour $\gamma$ in the $\xi_1'$ complex plane (shown in \figurename~\ref{fig:D01}) and a real segment in the $\xi_2'$ plane.
	
	\begin{figure}[ht!]
		\centering{
			\includegraphics[width=0.25\textwidth]{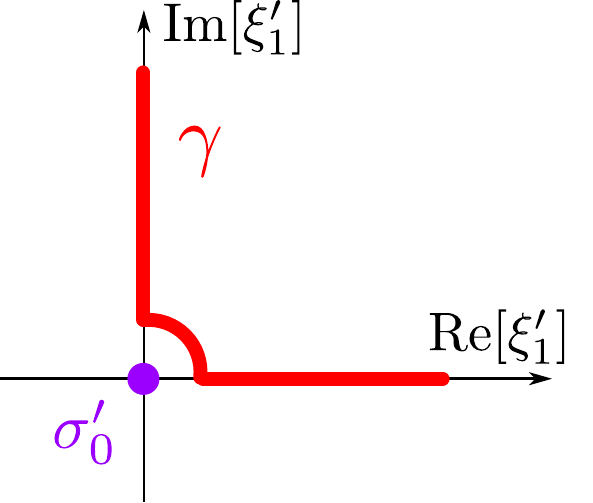}}
		\caption{\RED{Illustration of the contour $\gamma$ in the $\xi_1'$ plane}}
		\label{fig:D01}
	\end{figure}
	
	The contour $\gamma$ consists of a horizontal stem, a vertical stem, and a small arc.  
	The part of the integral corresponding to the vertical stem is exponentially small using point {\bf 1}. The part corresponding to the horizontal stem relates to points inside the circle $\sigma_0'$ and can be 
	made exponentially small as per point {\bf 2} due to the absence of problematic points in $\mathcal{B}^\star \setminus \sigma_0'$.
	Our aim is therefore to show that the integrand is exponentially small on the arc. For this to be true, we need ${\rm Im}[G]<0$ there. In particular, using (\ref{eq:particular_G}) and the fact that $\xi_2$ is real, we need ${\rm Im}[\tilde{x}_3 \xi_1']>{\rm Im}[\tilde{x}_1 \xi_1]$. Using (\ref{eq:est_xi_1}) and the fact that $|\xi_2'| < k$, we can show that ${\rm Im}[\tilde{x}_1 \xi_1]$ is a $O(|\xi_1'|^2)$ (but can be either positive or negative) and that ${\rm Im}[\tilde{x}_3 \xi_1']$ is positive and a $O(|\xi_1'|)$. Hence for $|\xi_1'|$ small enough, we have the required inequality and sought-after exponential decay.

%	 
%	the term 
%	\[
%	i \tilde x_3 \sqrt{k^2 - \xi_1^2 - \xi^2_2} = i \tilde x_3 \xi_1', 
%	\]
%	which provides exponential decay 
%	should dominate over the term $- i \tilde x_1 \xi_1$, which can provide either decay or growth
%	depending on~$\tilde x_1$.
%	Note  that the latter term has the imaginary value of $O(|\xi_1'|^2)$ according to 
%	(\ref{eq:est_xi_1}), while $i \tilde  x_3 \xi_1'$ is $O(|\xi_1'|)$. 
%	Thus, by taking an arc small enough and $r$ large enough, one can make the exponential term 
%	small on the arc. 
	
%	\RED{This reasoning works both for SOS on $\sigma_0$, 
%	and for crossings of $\sigma_0$ with other singularities. WHY? ALSO, WHAT WOULD AN SOS ON $\sigma_0'$ BE, SINCE $\nabla G$ IS NOT DEFINED THERE?} 
	
	\vskip 6pt
	\noindent
	{\bf 5. }
	Let us now consider the process of continuously increasing $\tilde x_3$ from zero to some positive values. According to point {\bf 1}, active points (SOS or crossings) located outside the circle when $x_3=0$ should stop providing a contribution as soon as $\tilde x_3$ stops being zero. Moreover, according to point~{\bf 4}, active SOS on $\sigma_0'$ and active crossings of $\sigma_0'$ with other singularities should also stop providing contributions to the asymptotic expansion of $u$ as soon as $\tilde x_3$ stops being zero. These non-trivial facts should be commented upon.
	
	The active points outside the circle actually correspond to surface waves that are only non-exponentially decaying at infinity on the $x_3=0$ plane, and decay exponentially away from it.
	Moreover, a detailed local study shows that an active SOS on $\sigma_0'$ leaves $\sigma_0'$
	and changes its type when $\tilde x_3$ stops being zero. Namely, it becomes a 2D saddle point 
	as described in point~{\bf 3}. Similarly, an active crossing point of $\sigma_0'$ and, say, 
	$\sigma_1'$ leaves $\sigma_0'$, changes its type and becomes a SOS on~$\sigma_1'$.
	Nevertheless, the wave terms provided by these active points vary smoothly with $x_3$, despite the fact that the types of the active special points are changed. 
	
%	\RED{One can build a more uniform description of an integral (\ref{eq:plane_wave-integral_1}) 
%	based on the concept of {\em plane wave decomposition\/}. This task is not simple since it 
%	requires a study of a complex 2D sphere. We are planning to prepare a paper on this subject. IS THIS COMMENT NECESSARY? IT IS VERY VAGUE AND WE'VE ALREADY USED THE TERM PLANE WAVE DECOMPOSITION EARLIER IN THIS APPENDIX}
	
	\vskip 6pt
	\noindent
	{\bf 6. } 
	According to points {\bf 1}--{\bf 5},
	active special points of the integral (\ref{eq:plane_wave-integral_1})
	can be found from a local consideration very similar to that developed for the integral 
	(\ref{eq:initial-integral}). 
	Finding the leading asymptotic terms for each active special point is also 
	an elementary task. 
	One can formulate and prove the {\em locality principle\/} for (\ref{eq:plane_wave-integral_1})
	stating that once all active special points are found, it is possible to deform $\mathbb{R}^2$
	into an integration surface $\bdG'$, on which the integrand is exponentially small 
	everywhere except in the neighbourhoods of the active special points.         
}

\end{document}